\numberwithin{equation}{section}
\newtheorem{thm}{Theorem}[section]
\newtheorem{lem}{Lemma}[section]
\newtheorem{prop}{Proposition}[section]
\newtheorem{defi}{Definition}[section]
\begin{document}
\title[Asymptotic analysis for Paneitz equations]
{Asymptotic analysis for fourth order Paneitz equations with critical growth}
\author{Emmanuel Hebey}
\address{Emmanuel Hebey, Universit\'e de Cergy-Pontoise, 
D\'epartement de Math\'ematiques, Site de 
Saint-Martin, 2 avenue Adolphe Chauvin, 
95302 Cergy-Pontoise cedex, 
France}
\email{Emmanuel.Hebey@math.u-cergy.fr}
\author{Fr\'ed\'eric Robert}
\address{Fr\'ed\'eric Robert, Institut Elie Cartan, Universit\'e Henri Poincar\'e - Nancy 1, 
B.P. 239, 54506 Vandoeuvre-l\`es-Nancy Cedex, France}
\email{frobert@iecn.u-nancy.fr}

\date{February 24, 2010. Revised September 27, 2010.}
\thanks{The authors were partially supported by the ANR grant 
ANR-08-BLAN-0335-01.}

\begin{abstract} We investigate fourth order Paneitz equations of critical growth 
in the case of $n$-dimensional closed conformally flat 
manifolds, $n \ge 5$. Such equations arise from 
conformal geometry and are modelized on the Einstein case of the geometric equation 
describing the effects of conformal changes of metrics on the $Q$-curvature. We obtain sharp 
asymptotics for arbitrary bounded energy sequences of solutions of our equations from 
which we derive stability and compactness properties. In doing so we establish the criticality of the 
geometric equation with respect to the trace of its second order terms.
\end{abstract}

\maketitle

In 1983, Paneitz \cite{Pan} introduced a conformally covariant fourth order operator extending the conformal Laplacian. 
Branson and {\O}rsted \cite{BraOrs}, and Branson \cite{Bra1,Bra2}, introduced the associated notion of $Q$-curvature 
when $n = 4$ and in higher dimensions when dealing with the conformally covariant extensions of the Paneitz 
operator by Graham-Jenne-Mason-Sparling. 
The scalar and the $Q$-curvatures are respectively, up to the conformally invariant Weyl's tensor in dimension four, 
the integrands in dimensions two and four for the 
Gauss-Bonnet formula for the Euler characteristic. The articles by Branson and Gover \cite{BraGov}, Chang \cite{Cha1,Cha2}, 
Chang and Yang \cite{ChaYan}, and Gursky \cite{Gur} contain several references and many interesting  material on the geometric 
and physics aspects associated to this notion of $Q$-curvature. 

\medskip In what follows we let $(M,g)$ be a smooth compact Riemannian manifold 
of dimension $n \ge 5$ and consider the fourth order variational Paneitz equations of critical Sobolev 
growth which are written as
\begin{equation}\label{CritEqt}
\Delta_g^2u + b\Delta_gu + cu = u^{2^\sharp-1}
\hskip.1cm ,
\end{equation}
where $\Delta_g = -\hbox{div}_g\nabla u$ is the Laplace-Beltrami operator, $b, c > 0$ 
are positive real numbers such that $c-\frac{b^2}{4} < 0$, $u$ is required to be positive, and 
$2^\sharp = \frac{2n}{n-4}$ 
is the critical Sobolev exponent. 
Equations like \eqref{CritEqt} are modeled on the conformal equation associated to 
the Paneitz operator when the background metric $g$ is Einstein. In few words, the conformal equation associated to the Paneitz operator, 
relating the $Q$-curvatures $Q_g$ and $Q_{\tilde g}$ of conformal metrics on arbitrary manifolds, is written as
\begin{equation}\label{ConfEqt}
\Delta_g^2u - \hbox{div}_g(A_gdu) + \frac{n-4}{2}Q_gu = \frac{n-4}{2}Q_{\tilde g}u^{2^\sharp-1}
\hskip.1cm ,
\end{equation}
where $\tilde g = u^{4/(n-4)}g$ and, if $Rc_g$ and $S_g$ denote the Ricci and scalar curvature of $g$, 
 $A_g$ is the smooth $(2,0)$-tensor field given by
\begin{equation}\label{DefAg}
A_g = \frac{(n-2)^2+4}{2(n-1)(n-2)}S_gg - \frac{4}{n-2}Rc_g
\hskip.1cm .
\end{equation}
When $g$ is Einstein, so that $Rc_g = \lambda g$ for some $\lambda \in \mathbb{R}$, equation \eqref{ConfEqt} 
can be simplified and written as
\begin{equation}\label{ConfEinstEqt}
\Delta_g^2u + \frac{b_n\lambda}{n-1}\Delta_gu + \frac{c_n\lambda^2}{(n-1)^2}u = \frac{n-4}{2}Q_{\tilde g}u^{2^\sharp-1}
\hskip.1cm ,
\end{equation}
where $b_n$ and $c_n$ are given by
\begin{equation}\label{CoeffGeom}
b_n = \frac{n^2-2n-4}{2}\hskip.1cm\hbox{and}\hskip.1cm c_n = \frac{n(n-4)(n^2-4)}{16}\hskip.1cm .
\end{equation}
In particular, as mentioned, \eqref{CritEqt} is of the type \eqref{ConfEinstEqt}. An important remark is that 
$c_n - \frac{b_n^2}{4} = -1$ is negative. Given $\Lambda > 0$ we let $\mathcal{S}_{b,c}^\Lambda$ be the set
\begin{equation}\label{DefSlice}
\mathcal{S}_{b,c}^\Lambda = \left\{u \in C^4(M), u > 0,~\hbox{s.t.}~\Vert u\Vert_{H^2} \le \Lambda
~\hbox{and}~u~\hbox{solves}~\eqref{CritEqt}\right\}\hskip.1cm ,
\end{equation}
where $H^2$ is the Sobolev space 
of functions in $L^2$ with two derivatives in $L^2$. Following standard terminology, we say that 
\eqref{CritEqt} is {\it compact} if for any $\Lambda > 0$, $\mathcal{S}_{b,c}^\Lambda$ is compact in the 
$C^4$-topology (we adopt here the bounded version of compactness, as first introduced by Schoen \cite{Sch}). 
The stronger notion of stability we discuss in the sequel is defined as follows:

\begin{defi}\label{DefStab} Equation \eqref{CritEqt} is {\it stable} if it is compact and if for 
any $\Lambda > 0$, and any $\varepsilon > 0$, there exists 
$\delta >  0$ such that for any $b^\prime$ and $c^\prime$, it holds that 
\begin{equation}\label{GeomStab}
d_{C^4}^{\hookrightarrow}(\mathcal{S}_{b^\prime,c^\prime}^\Lambda;\mathcal{S}_{b,c}^\Lambda) < \varepsilon
\end{equation}
as soon as $\vert b^\prime - b\vert +  \vert c^\prime - c\vert < \delta$, where $\mathcal{S}_{b,c}^\Lambda$ and $\mathcal{S}_{b^\prime,c^\prime}^\Lambda$ 
are given by \eqref{DefSlice}, and 
where for $X, Y \subset C^4$, $d_{C^4}^{\hookrightarrow}(X;Y)$ is the pointed distance defined as the sup over 
the $u \in X$ of the inf over the $v \in Y$ of $\Vert v-u\Vert_{C^4}$.
\end{defi}

The meaning of \eqref{GeomStab} is that small perturbations of $b$ and $c$ 
in \eqref{CritEqt} do not create 
solutions which stand far from solutions of the original equation. Stability is an important notion in view of topological arguments
and degree theory. Also it has a natural translation in terms  of phase stability for solitons
of the fourth order Schr\"odinger equations
introduced by Karpman \cite{Kar} and Karpman and Shagalov \cite{KarSha} (see the remark 
at the end of Section \ref{ProofTheorem2Part2}). The main questions we ask here are:

\medskip\noindent{\bf Questions:} (Q1) {\it describe and control the asymptotic behavior of arbitrary 
finite energy sequences of solutions of equations like \eqref{CritEqt}.}

\noindent (Q2) {\it find conditions on $b$ and $c$ for \eqref{CritEqt} to be stable.}

\medskip\noindent By contradiction, \eqref{CritEqt} 
is stable if and only if for any sequences $(b_\alpha)_\alpha$ 
and $(c_\alpha)_\alpha$ of real numbers converging to $b$ and $c$, and any sequence $(u_\alpha)_\alpha$ of smooth positive 
solutions of
\begin{equation}\label{PertCritEqt}
\Delta_g^2u + b_\alpha\Delta_gu + c_\alpha u = u^{2^\sharp-1}
\end{equation}
such that $(u_\alpha)_\alpha$ is bounded in $H^2$, there holds that, up to a subsequence, 
$u_\alpha \to u_\infty$ in $C^4(M)$, where $u_\infty$ is a smooth positive solution of \eqref{CritEqt}. 
In other words, \eqref{CritEqt} is stable if we can impede bubbling for arbitrary bounded sequences in $H^2$ of 
solutions of arbitrary sequences of equations like \eqref{PertCritEqt}, including \eqref{CritEqt} itself. In order to do so, we need 
sharp answers to (Q1).

\medskip As is well known, critical equations tend to be unstable (precisely because of 
the bubbling which is usually associated with critical equations). A consequence of 
Theorem \ref{StabilityThm} below is that bubbling is not only associated with the criticality of the equation but also with the 
geometry through the relation $b = \frac{1}{n}\hbox{Tr}_g(A_g)$ 
which, see \eqref{AddedEqtIntro} below, characterizes the middle term of the geometric equation \eqref{ConfEinstEqt}. 

\medskip Concerning the bound on the energy we require in Definition \ref{DefStab}, it should be noted that we cannot expect 
the existence of a priori $H^2$-bounds for arbitrary sequences of equations like \eqref{PertCritEqt} 
when dealing with large coefficents $b$ and $c$ (like it is the case for Yamabe type equations associated with second order Schr\"odinger operators 
with large potentials). In parallel it is intuitively clear that bounded sequences in $H^2$ of solutions of equations like 
\eqref{PertCritEqt} can develop an arbitrarily large number of peaks. Summing sphere singularities 
in a naive way we indeed can prove, see Hebey, Robert and Wen \cite{HebRobWen}, 
that for any quotient of the $n$-sphere, $n \ge 12$, there exist sequences $(u_\alpha)_\alpha$ and $(v_\alpha)_\alpha$ of smooth positive solutions of
$$\Delta_g^2u + b_n\Delta_gu + c_\alpha u = u^{2^\sharp-1}$$
such that $(u_\alpha)_\alpha$ blows up with an arbitrarily large given number $k$ of peaks and $\Vert v_\alpha\Vert_{H^2} \to +\infty$ 
as $\alpha \to +\infty$, 
where $(c_\alpha)_\alpha$ is a sequence of smooth functions converging in the $C^1$-topology to $c_n$, and $b_n$ and $c_n$ are as in 
\eqref{CoeffGeom}. In other words, illustrating the above discussion, we see that equations like \eqref{CritEqt} create bubbling, 
even multiple of cluster type (namely with blow-up points collapsing 
on a single point), and that there is no 
statement about universal a priori $H^2$-bounds for arbitrary solutions of arbitrary equations like \eqref{PertCritEqt}. Also we see that an 
equation can be compact and unstable (the geometric equation is
compact on quotients of the sphere). 
Compactness for the geometric equation in the conformally flat 
case has been established by Hebey and Robert \cite{HebRob}, and by 
Qing and Raske \cite{QinRas1,QinRas2}. 
The elegant geometric approach in Qing and Raske \cite{QinRas1,QinRas2} 
is based on the integral representation of the solutions 
through the developing map under 
the natural assumption that the Poincar\'e exponent is small. Recently, 
Wei and Zhao \cite{WeiZha} constructed blow-up examples 
in the non conformally flat case when $n \ge 25$.

\medskip Let $\lambda_i(A_g)_x$, 
$i = 1,\dots,n$, be the $g$-eigenvalues of $A_g(x)$ repeated with their multiplicity. Let $\lambda_1$ be the infimum over $i$ and $x$,
 and $\lambda_2$ be the supremum over $i$ and $x$ of the $\lambda_i(A_g)_x$'s. Following Hebey, Robert and Wen \cite{HebRobWen} 
 we define the wild spectrum of $A_g$ to be the interval $\mathcal{S}_w = [\lambda_1,\lambda_2]$. 
It was proved 
in \cite{HebRobWen} that \eqref{CritEqt} is stable on conformally flat manifolds when $n = 6,7, 8$ and 
$b < \lambda_1$, or $n \ge 9$ and $b \not\in \mathcal{S}_w$. We improve these results in different important significative directions in the 
present article: we add the case of dimension $n = 5$,
we replace the condition $b \not\in \mathcal{S}_w$ 
by the much weaker condition $b \not= \frac{1}{n}\hbox{Tr}_g(A_g)$, and 
we accept large values of $b$ when $n = 6, 7$. On the other hand, we leave open the question of getting similar results 
in the nonconformally flat case. 
In the above discussion, and in what follows, $\hbox{Tr}_g(A_g) = g^{ij}A_{ij}$ is the trace of $A_g$ with respect to $g$. There clearly holds that 
$\frac{1}{n}\hbox{Tr}_g(A_g) \in \mathcal{S}_w$ at any point in $M$, and it is easily seen that
\begin{equation}\label{AddedEqtIntro}
\hbox{Tr}_g(A_g) = \frac{n^2-2n-4}{2(n-1)}S_g\hskip.1cm .
\end{equation}
Let $(u_\alpha)_\alpha$ be a bounded sequence in $H^2$ of solutions of \eqref{PertCritEqt}. Up to a subsequence, 
$u_\alpha \rightharpoonup u_\infty$ weakly in $H^2$ for some $u_\infty \in H^2$ which solves \eqref{CritEqt}. When $c - \frac{b^2}{4} < 0$, by 
the maximum principle, either $u_\infty > 0$ in $M$ or $u_\infty \equiv 0$. 
In the second order case, in low dimensions 
(namely $n = 3, 4, 5$) we know from Druet \cite{Dru} that we necessarily have that $u_\infty \equiv 0$ if the convergence of $u_\alpha$ to 
$u_\infty$ is not strong (but only weak) and the $u_\alpha$'s solve Yamabe type equations. 
In the framework of question (Q1), we also address in this article the question of whether or not such type of results extend 
to the fourth order case when passing from Yamabe type equations to Paneitz equations like \eqref{CritEqt}. We positively answer 
to this question in Theorem \ref{LimitProfileThm} below, the low dimensions being now $5, 6, 7$. 

\begin{thm}\label{LimitProfileThm} Let $(M,g)$ be a smooth compact conformally flat Riemannian manifold of dimension $n = 5, 6, 7$ 
and $b, c > 0$ be positive real numbers such that $c - \frac{b^2}{4} < 0$. Let $(b_\alpha)_\alpha$ 
and $(c_\alpha)_\alpha$ be sequences of real numbers converging to $b$ and $c$, and $(u_\alpha)_\alpha$ be a bounded sequence 
in $H^2$ of smooth positive solutions of \eqref{PertCritEqt} such that $u_\alpha \rightharpoonup u_\infty$ weakly in $H^2$ 
as $\alpha \to +\infty$. Then either $u_\alpha \to u_\infty$ strongly in any $C^k$-topology, or $u_\infty \equiv 0$.
\end{thm}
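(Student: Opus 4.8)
The plan is to argue by contradiction: assume $u_\infty \not\equiv 0$ (hence $u_\infty > 0$ by the maximum principle, since $c - \frac{b^2}{4} < 0$) and that the convergence $u_\alpha \to u_\infty$ is not strong in $C^4$, and derive a dimensional obstruction that rules out $n = 5, 6, 7$. By standard bubble-tree decomposition for fourth order critical equations (as in the references cited, adapted to the conformally flat case), failure of strong convergence forces the existence of at least one concentration point $x_0 \in M$ and a rescaled bubble: there are points $x_\alpha \to x_0$ and scales $\mu_\alpha \to 0$ such that $\mu_\alpha^{(n-4)/2} u_\alpha(x_\alpha + \mu_\alpha \cdot) \to U$ in $C^4_{loc}$, where $U$ is the standard (Paneitz) bubble on $\mathbb{R}^n$, i.e. a positive solution of $\Delta^2 U = U^{2^\sharp - 1}$ of the form $U(x) = \bigl(1 + c_n|x|^2\bigr)^{-(n-4)/2}$ up to scaling.

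The heart of the argument is a \emph{Pohozaev-type identity} localized on a small geodesic ball $B_\delta(x_0)$, applied to $u_\alpha$. Multiplying \eqref{PertCritEqt} by $x^k\partial_k u_\alpha$ (or rather $(\nabla u_\alpha, X)$ for a suitable conformal Killing field $X$ vanishing at $x_0$) and integrating by parts produces, after using the conformal flatness to control the error terms coming from the metric, an identity of the schematic form
\begin{equation*}
\int_{B_\delta(x_0)} \Bigl( b_\alpha - \tfrac{1}{n}\mathrm{Tr}_g(A_g)\Bigr)|\nabla u_\alpha|^2 \, dv_g + (\text{lower order}) = (\text{boundary terms on } \partial B_\delta).
\end{equation*}
One then estimates both sides against the bubble profile: the boundary terms are controlled using pointwise estimates $u_\alpha(x) \lesssim \mu_\alpha^{(n-4)/2}/|x - x_\alpha|^{n-4}$ valid away from $x_\alpha$, and vanish in the limit $\delta \to 0$ after $\alpha \to \infty$ in the regime $n \le 7$. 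The key point is the behavior of the interior term: since $u_\infty > 0$, the gradient $\nabla u_\infty$ is \emph{not} identically zero near $x_0$, so the interaction between the bubble and the nonzero background $u_\infty$ contributes a term whose sign and size are dictated by $b - \frac{1}{n}\mathrm{Tr}_g(A_g)$ — but here $b$ is a constant and $\frac{1}{n}\mathrm{Tr}_g(A_g) = \frac{n^2-2n-4}{2n(n-1)}S_g$ varies (or, more precisely, the relevant quantity is forced to vanish). Tracking the exact power of $\mu_\alpha$ that survives, one finds that in dimensions $5, 6, 7$ the dominant balance forces $u_\infty \equiv 0$, contradicting our assumption.

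More carefully, the mechanism is the one familiar from Druet's work in the second order low-dimensional case: the energy of the bubble is $\int_{\mathbb{R}^n} U^{2^\sharp} < \infty$, but the \emph{second} order terms $b_\alpha \Delta_g u_\alpha$ and $c_\alpha u_\alpha$ produce, when tested against the bubble, contributions of order $\mu_\alpha^2$ (from the $b$-term) and $\mu_\alpha^4$ (from the $c$-term) times $\log$-type or integral factors, and the interaction with $u_\infty > 0$ produces a term of order $\mu_\alpha^{(n-4)/2}$. For $n = 5, 6, 7$ one has $(n-4)/2 < 2$, so the $u_\infty$-interaction term dominates the $b$-term; matching it against the (necessarily nonnegative, by the sign of $U$ and $u_\infty$) leading contributions gives $u_\infty(x_0) = 0$ or $\nabla u_\infty(x_0) = 0$ everywhere, which by unique continuation / positivity forces $u_\infty \equiv 0$. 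I would organize this as: (i) set up the bubble decomposition and the pointwise estimates; (ii) write the localized Pohozaev identity with explicit error control using conformal flatness; (iii) compute the asymptotics of each term in $\mu_\alpha$; (iv) extract the contradiction in the range $n \le 7$.

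\textbf{Main obstacle.} The delicate step is (iii), the precise $\mu_\alpha$-expansion of the Pohozaev terms — in particular showing that the cross term between the bubble and $u_\infty$ is genuinely of order $\mu_\alpha^{(n-4)/2}$ with a \emph{definite nonzero coefficient} proportional to $u_\infty(x_0)$ (or its derivatives), rather than being killed by a cancellation. This requires sharp pointwise control of $u_\alpha$ in the intermediate region between the bubble scale and the fixed scale, i.e. estimates of the type $u_\alpha(x) \simeq \mu_\alpha^{(n-4)/2}|x|^{4-n} + u_\infty(x) + o(\cdots)$ uniformly, which is exactly where the restriction to conformally flat manifolds and to $n \le 7$ enters (in higher dimensions the error terms from the metric expansion, controlled by the Weyl tensor and here absent, would otherwise interfere, and the threshold $(n-4)/2 < 2$ is what makes the $u_\infty$-term beat the $b$-term). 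I expect this to be handled by the a priori blow-up analysis developed in the earlier sections of the paper, which I would invoke here.
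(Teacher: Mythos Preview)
Your overall strategy --- Pohozaev identity plus the dimensional comparison $(n-4)/2 < 2$ --- matches the paper's, and you correctly identify that the interaction with $u_\infty$ produces the term of order $\mu_\alpha^{(n-4)/2}$ that drives the contradiction. But the implementation you sketch, on a \emph{fixed} ball $B_\delta(x_0)$, does not work as written, and the paper uses a different and essential device.

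The issue is your treatment of the boundary terms. You invoke the pointwise bound $u_\alpha(x)\lesssim \mu_\alpha^{(n-4)/2}|x-x_\alpha|^{4-n}$, but when $u_\infty\not\equiv 0$ the correct estimate (Proposition~\ref{SharpPtEst}) carries an additive $\|u_\infty\|_{L^\infty}$; on $\partial B_\delta(x_0)$ one has $u_\alpha\to u_\infty$ in $C^4$, so the boundary terms converge to those of $u_\infty$, not to zero. Subtracting the Pohozaev identity for $u_\infty$ then kills both sides and you are left with $0=0$ after the double limit --- no contradiction. The ``unique continuation'' step you propose is not the mechanism and is not needed.

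What the paper actually does (Lemma~\ref{LemPrf1}) is apply the Pohozaev identity on a \emph{shrinking} ball $B_{x_\alpha}(\delta\sqrt{\mu_\alpha})$. The scale $\sqrt{\mu_\alpha}$ is precisely the interaction scale of the bubble with the background: after rescaling $x\mapsto \sqrt{\mu_\alpha}\,x$, one has (from \cite{HebRobWen}) $u_\alpha(\exp_{x_\alpha}(\sqrt{\mu_\alpha}\,x))\to A|x|^{4-n}+\hat\varphi(x)$ in $C^3_{loc}$ away from $0$, with $\hat\varphi$ biharmonic and $\hat\varphi(0)>0$ exactly when $u_\infty\not\equiv 0$. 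The boundary integrals on $\partial B_0(\delta\sqrt{\mu_\alpha})$ then compute, after a second Pohozaev identity on an annulus for the limit profile, to an explicit constant $K(u_\infty)=(n-2)(n-4)^2\omega_{n-1}A\,\hat\varphi(0)$ times $\mu_\alpha^{(n-4)/2}$. This is where the nonzero coefficient you were worried about in your ``Main obstacle'' comes from --- not from a cross-term expansion, but from the boundary contribution at the intermediate scale. The proof then finishes by the elementary estimate $\int_{B_{x_\alpha}(\delta\sqrt{\mu_\alpha})}|\nabla u_\alpha|^2\,dv_g = o(\mu_\alpha^{(n-4)/2})$ for $n=5,6,7$ (using Proposition~\ref{SharpPtEst} and $\mu_\alpha^2=o(\mu_\alpha^{(n-4)/2})$), which forces $K(u_\infty)=0$, hence $\hat\varphi(0)=0$, hence $u_\infty\equiv 0$.
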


Theorem \ref{LimitProfileThm} answers the above mentioned question of whether or not we can have a nontrivial limit profile for blowing-up sequences 
of solutions of \eqref{PertCritEqt}. As a remark, the geometric equation on the sphere provides in any dimension $n \ge 5$ 
an example of an equation like \eqref{CritEqt} with sequences 
$(u_\alpha)_\alpha$ of solutions such that $u_\alpha \not\to u_\infty$ strongly and $u_\infty \equiv 0$. 
Now we return to the question of the stability of \eqref{CritEqt}. When $n = 5$ we let $G$ be the Green's function 
of the fourth order Paneitz type operator $P_g = \Delta_g^2 + b\Delta_g + c$. Then
\begin{equation}\label{GreenFct}
G_x(y) = \frac{1}{6\omega_4d_g(x,y)} + \mu_x(y)
\hskip.1cm ,
\end{equation}
where $G_x(\cdot) = G(x,\cdot)$ is the Green's function at $x$ of $P_g$, $\omega_4$ is the volume of the unit 
$5$-sphere, and $\mu_x$ is $C^{0,\theta}$ in $M$ for $\theta \in (0,1)$. The mass at $x$ of $P_g$ is $\mu_x(x)$. Our second result states as follows.

\begin{thm}\label{StabilityThm} Let $(M,g)$ be a smooth compact conformally flat Riemannian manifold of dimension $n \ge 5$ 
and $b, c > 0$ be positive real numbers such that $c - \frac{b^2}{4} < 0$. Assume that one of the following conditions holds true:

(i) $n = 5$ and $\mu_x(x) > 0$ for all $x$\hskip.1cm ,

(ii) $n = 6$ and $b \not\in \mathcal{S}_w$\hskip.1cm ,

(iii) $n = 8$ and $b < \frac{1}{8}\min_M\hbox{Tr}_g(A_g)$\hskip.1cm,

(iv) $n = 7$ or $n \ge 9$ and $b \not= \frac{1}{n}\hbox{Tr}_g(A_g)$ in $M$\hskip.1cm .

\noindent Then for any sequences $(b_\alpha)_\alpha$ 
and $(c_\alpha)_\alpha$ of real numbers converging to $b$ and $c$, and any bounded sequence $(u_\alpha)_\alpha$ in $H^2$ of smooth positive 
solutions of \eqref{PertCritEqt} there holds that, up to a subsequence, 
$u_\alpha \to u_\infty$ in $C^4(M)$ for some smooth positive solution $u_\infty$ of \eqref{CritEqt}. In particular,  \eqref{CritEqt} is stable.
\end{thm}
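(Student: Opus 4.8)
The plan is to argue by contradiction and reduce the statement to the exclusion of blow-up, which we rule out by feeding sharp pointwise estimates for blowing-up sequences into a localized Pohozaev identity. Suppose the conclusion fails: there are sequences $b_\alpha\to b$, $c_\alpha\to c$ and smooth positive solutions $u_\alpha$ of \eqref{PertCritEqt}, bounded in $H^2$, no subsequence of which converges in $C^4(M)$. Up to a subsequence, $u_\alpha\rightharpoonup u_\infty$ weakly in $H^2$ with $u_\infty\ge0$ a (smooth, by regularity) solution of \eqref{CritEqt}. Since $b,c>0$, the operator $\Delta_g^2+b\Delta_g+c$ is coercive on $H^2$, and the same holds uniformly for the perturbed operators, so by the $H^2$ (Struwe-type) decomposition for these fourth order operators one may write $u_\alpha = u_\infty + \sum_{j=1}^{k}\mathcal{B}_{j,\alpha} + R_\alpha$ with $R_\alpha\to0$ in $H^2$, where the $\mathcal{B}_{j,\alpha}$ are standard rescaled bubbles with concentration points $x_{j,\alpha}\to x_j$ and scales $\mu_{j,\alpha}\to0$, and $k<\infty$ by the energy bound. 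If $k=0$ then $u_\alpha\to u_\infty$ strongly in $H^2$; a standard elliptic bootstrap on $\Delta_g^2u_\alpha+b_\alpha\Delta_gu_\alpha+c_\alpha u_\alpha=u_\alpha^{2^\sharp-1}$ upgrades this to $C^4(M)$, and testing \eqref{PertCritEqt} against $u_\alpha$ together with coercivity and the Sobolev inequality gives $\Vert u_\alpha\Vert_{L^{2^\sharp}}\ge\varepsilon_0>0$, so $u_\infty\not\equiv0$ and hence $u_\infty>0$ by the maximum principle — contradicting the assumption. Thus $k\ge1$, and it remains to contradict the occurrence of genuine blow-up.

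\emph{Sharp asymptotics.} The technical heart is a sharp pointwise control of $u_\alpha$: a weighted $L^\infty$ bound forcing $u_\alpha$ to be comparable, near each $x_j$, to the corresponding rescaled Sobolev extremal $U$, an exclusion of energy in the necks, and finally a two-sided estimate of $u_\alpha$ by a controlled superposition of such bubbles plus a Green's-function-type tail for $\Delta_g^2+b\Delta_g+c$. Conformal flatness is used decisively here: it kills the Weyl-tensor obstruction terms that otherwise appear in dimension $\ge6$, so that the only residual geometric quantities are $\hbox{Tr}_g(A_g)$ — the trace of the second order part of the geometric equation, see \eqref{AddedEqtIntro} — and, when $n=5$, the mass $\mu_x(x)$ of $\Delta_g^2+b\Delta_g+c$ from \eqref{GreenFct}. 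Moreover one reduces to the case $u_\infty\equiv0$: this is exactly Theorem \ref{LimitProfileThm} when $n=5,6,7$, and for $n\ge8$ it follows from the Pohozaev computation below, which produces a term proportional to $u_\infty(x_j)$ of definite sign whenever $u_\infty\not\equiv0$.

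\emph{Pohozaev identity and the dimensional dichotomy.} Fix a concentration point $x_j$ realizing the smallest scale. Multiply \eqref{PertCritEqt} by the dilation multiplier $\langle x,\nabla u_\alpha\rangle+\tfrac{n-4}{2}u_\alpha$ in geodesic normal coordinates at $x_j$ — chosen so that the critical nonlinearity contributes no bulk term — and integrate over a small geodesic ball $B_\delta(x_j)$, obtaining the Pohozaev identity
\[
\int_{\partial B_\delta(x_j)}\mathcal{P}_g(u_\alpha)\,d\sigma = A_n\,b_\alpha\int_{B_\delta(x_j)}|\nabla u_\alpha|^2\,dv_g + B_n\,c_\alpha\int_{B_\delta(x_j)}u_\alpha^2\,dv_g + (\text{geometric remainder}),
\]
where $\mathcal{P}_g$ is the boundary Pohozaev quadratic form and $A_n,B_n$ are explicit constants. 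Using the sharp asymptotics, one computes the leading order of each term after rescaling by the appropriate power of $\mu_{j,\alpha}$ and then lets $\delta\to0$: the left-hand side converges to a nonzero universal constant coming from the integrals over $\mathbb{R}^n$ of the extremal $U$, while the bulk and remainder terms combine, at leading order, into a definite multiple of $b-\tfrac{1}{n}\hbox{Tr}_g(A_g)(x_j)$ — respectively of the mass $\mu_{x_j}(x_j)$ when $n=5$. The hypotheses are precisely what makes these two sides incompatible: for $n=5$ the mass enters with a fixed sign and $\mu_x(x)>0$ closes the argument; for $n=7$ or $n\ge9$ the coefficient of $b-\tfrac{1}{n}\hbox{Tr}_g(A_g)(x_j)$ is nonzero and the identity forces $b=\tfrac{1}{n}\hbox{Tr}_g(A_g)(x_j)$, which is excluded; for $n=6$ and $n=8$ a borderline $\log\mu_{j,\alpha}$ correction enters, which is why one needs the stronger assumptions $b\notin\mathcal{S}_w$ (fixing the sign of the relevant eigenvalue-type quantity) and the strict inequality $b<\tfrac{1}{8}\min_M\hbox{Tr}_g(A_g)$. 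In every case we reach a contradiction with $k\ge1$, so $k=0$; by the first paragraph this gives $u_\alpha\to u_\infty$ in $C^4(M)$ with $u_\infty>0$ solving \eqref{CritEqt}, and since $(b_\alpha)_\alpha,(c_\alpha)_\alpha$ were arbitrary, compactness of $\mathcal{S}_{b,c}^\Lambda$ and hence stability in the sense of Definition \ref{DefStab}.

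\emph{Main obstacle.} The bulk of the work, and the main difficulty, is the sharp-asymptotics step together with the exact bookkeeping in the Pohozaev identity in the borderline dimensions $n=6$ and $n=8$: proving the two-sided pointwise estimates while controlling the sign-indefinite middle term $b_\alpha\Delta_gu_\alpha$ through the blow-up and, for $n\ge8$, carrying a possibly nontrivial weak limit $u_\infty$; and then extracting the precise leading order — including the logarithmic term — of each side of the identity.
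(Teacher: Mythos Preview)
Your broad outline---contradiction, Struwe decomposition, sharp pointwise estimates, Pohozaev identity---matches the paper, but several of the decisive mechanisms are misidentified, and as written the argument would not close.

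First, the bubble you single out is the wrong one. The paper works at the point $x_\alpha$ associated to the \emph{largest} weight $\mu_\alpha=\max_i\mu_{i,\alpha}$, not the smallest scale. The reason is that the relevant bulk term is $\int_{B}|\nabla u_\alpha|^2$, and each bubble contributes on the order of $\mu_{i,\alpha}^2$ to this integral; hence the largest $\mu_{i,\alpha}$ dominates, and the trace computation (Proposition~\ref{TraceEst}) yielding $\int A(\nabla u_\alpha,\nabla u_\alpha)=\beta\,\hbox{Tr}_g(A)(x_\infty)\,\mu_\alpha^2+o(\mu_\alpha^2)$ is only available at this maximal bubble. With the smallest scale the other bubbles swamp the estimate.

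Second, there is no logarithmic correction in $n=6$ or $n=8$; the mechanisms are different from what you describe. For $n=8$ the two sides of the $\sqrt{\mu_\alpha}$-ball identity (Lemma~\ref{LemPrf1}) are of exactly the same order, $\mu_\alpha^{(n-4)/2}=\mu_\alpha^2$, and one compares \emph{signs}: the boundary contribution is $-\bigl(K(u_\infty)+o(1)\bigr)\mu_\alpha^2$ with $K(u_\infty)\ge0$, while the bulk equals $\beta\bigl(\tfrac{1}{8}\hbox{Tr}_g(A_g)(x_\infty)-b+o(1)\bigr)\mu_\alpha^2>0$ under (iii). For $n=6$ the paper does \emph{not} use the shrinking-ball identity at all; it applies Pohozaev on balls of fixed radius $\delta$ around each limit blow-up point, and the hypothesis $b\notin\mathcal{S}_w$ is used pointwise to give the tensor $A_g-bg$ a definite sign, so that $\bigl|\int(A_g-b_\alpha g)(\nabla\hat u_\alpha,\nabla\hat u_\alpha)\bigr|$ controls $\int|\nabla u_\alpha|^2$ from below, contradicting the Pohozaev output $(\varepsilon_\delta+o(1))\int|\nabla u_\alpha|^2$.

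Third, the $n=5$ case is not simply ``the mass enters with fixed sign.'' After reducing to $u_\infty\equiv0$ via Theorem~\ref{LimitProfileThm}, the paper shows that $u_\alpha\big/\!\int_M u_\alpha^{2^\sharp-1}$ converges in $C^4_{\mathrm{loc}}(M\setminus\mathcal S)$ to $\sum_i\lambda_iG_{x_i}$, and the Pohozaev identity at each $x_i$ then yields the \emph{system}
\[
\lambda_i^2\,\mu_{x_i}(x_i)+\sum_{j\ne i}\lambda_i\lambda_j\,G(x_i,x_j)=0,\qquad i=1,\dots,N.
\]
The contradiction requires both $\mu_x(x)>0$ and positivity of $G$ off the diagonal (the latter coming from the factorization of $P_g$ available when $c<b^2/4$). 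Your sketch omits the Green-function limit and the coupling between distinct blow-up points, without which clusters ($N<k$) and multiple limit points cannot be excluded.
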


As a direct consequence of Theorem \ref{StabilityThm}, cluster solutions and bubble towers do not exist for \eqref{CritEqt} when one of the 
conditions (i) to (iv) is assumed to hold. This includes the existence of cluster solutions or bubble towers constructed by means of perturbing \eqref{CritEqt} 
such as in \eqref{PertCritEqt}. 

\medskip Let $G_0$ be the Green's function of the geometric Paneitz operator $P_0$ in the left hand side of \eqref{ConfEqt}. Humbert and Raulot \cite{HumRau} proved 
the very nice result that in the conformally flat case, assuming that the Yamabe invariant is positive, that $P_0$ is positive, and that $G_0 > 0$ outside 
the diagonal, then the mass 
of $G_0$ is nonnegative and equal to zero at one point 
if and only if the manifold is conformally diffeomorphic to the sphere. A similar result was previously established by Qing and Raske \cite{QinRas1,QinRas2} 
when the Poincar\'e exponent is small. By (i) in Theorem \ref{StabilityThm} we need to find conditions under which $\mu_x(x) > 0$ 
for all $x$, where $\mu_x(x)$ is the mass of our operator $P_g = \Delta_g^2 + b\Delta_g + c$. A 
third theorem we prove, based on the Humbert and Raulot \cite{HumRau} result, is as follows.

\begin{thm}\label{PositMass} Let $(M,g)$ be a smooth compact conformally flat Riemannian manifold of dimension $n=5$ with positive Yamabe invariant such that the Green's function of the geometric Paneitz operator $P_0$ is positive and let $b,c>0$ be positive real numbers. We assume that
$b g\leq A_g$ in the sense of bilinear forms and $c\leq\frac{1}{2}Q_g$, 
and in case $A_g\equiv bg$ and $c\equiv\frac{1}{2}Q_g$ simultaneously, we assume in addition that $(M,g)$ is not conformally diffeomorphic to the standard sphere. Then the mass $\mu_x(x)$ of $P_g$ is positive 
for all $x \in M$.
\end{thm}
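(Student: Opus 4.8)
The plan is to reduce the positivity of the mass $\mu_x(x)$ of $P_g = \Delta_g^2 + b\Delta_g + c$ to the positivity of the mass of the geometric Paneitz operator $P_0$, which is known to hold by the Humbert--Raulot result quoted just before the statement. First I would set up the comparison between the two Green's functions. Writing $G_0$ for the Green's function of $P_0$ and $G$ for that of $P_g$, and using the expansion \eqref{GreenFct} for $G$ together with the analogous expansion $G_{0,x}(y) = \frac{1}{6\omega_4 d_g(x,y)} + \mu^0_x(y)$ for $G_0$ (same leading coefficient since the leading part of the operator is $\Delta_g^2$ in both cases and $n=5$), the difference $H_x := G_{0,x} - G_x$ has no singularity on the diagonal: it is the Green's-function-type solution of $P_g H_x = (P_g - P_0) G_{0,x}$, and $H_x(x) = \mu^0_x(x) - \mu_x(x)$. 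So if I can show $H_x \le 0$, i.e. $G \ge G_0$, then $\mu_x(x) \ge \mu^0_x(x) \ge 0$, with strictness handled by the rigidity clause.

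The key step is therefore to prove $G \ge G_0$ pointwise, and this is exactly where the hypotheses $bg \le A_g$ and $c \le \tfrac12 Q_g$ enter. Comparing \eqref{ConfEqt}/\eqref{DefAg} with the definition of $P_g$, one has $P_0 - P_g = -\mathrm{div}_g(A_g\, d\cdot) - b\Delta_g + \big(\tfrac{n-4}{2}Q_g - c\big) = -\mathrm{div}_g\big((A_g - bg)\, d\cdot\big) + \big(\tfrac12 Q_g - c\big)$ when $n=5$ (so $\tfrac{n-4}{2} = \tfrac12$). Since $A_g - bg \ge 0$ as a bilinear form and $\tfrac12 Q_g - c \ge 0$, the operator $P_0 - P_g$ is a nonnegative second-order operator in divergence form plus a nonnegative zeroth-order term. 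I would then argue via a maximum-principle / iteration scheme: since $c - \tfrac{b^2}{4} < 0$, the operator $P_g$ factors (up to sign conventions) through two second-order operators with positive Green's functions, so $P_g$ satisfies a comparison principle on functions (this is the standard device used for conformally flat Paneitz operators, and is available from the structure already exploited in \eqref{GreenFct}); alternatively, one writes $G - G_0 = G_g * \big[(P_0 - P_g) G_{0,x}\big]$ and uses that the convolution kernel $G_g = G$ is positive together with $(P_0 - P_g) G_{0,x} \ge 0$, the latter following from $G_{0,x} \ge 0$, $A_g - bg \ge 0$ and $\tfrac12 Q_g - c \ge 0$ after an integration by parts. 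Either way the conclusion is $G \ge G_0$, hence $\mu_x(x) \ge \mu^0_x(x)$.

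To finish I would invoke Humbert--Raulot: under the stated hypotheses (positive Yamabe invariant, $P_0$ positive, $G_0 > 0$ off the diagonal) the mass $\mu^0_x(x)$ of $P_0$ is nonnegative and vanishes at some point exactly when $(M,g)$ is conformally diffeomorphic to the round sphere. If $(M,g)$ is not conformal to the sphere this already gives $\mu_x(x) \ge \mu^0_x(x) > 0$ everywhere. If $(M,g)$ is conformal to the sphere, then by hypothesis we are not in the borderline case $A_g \equiv bg$, $c \equiv \tfrac12 Q_g$, so at least one of the inequalities $A_g - bg \ge 0$, $\tfrac12 Q_g - c \ge 0$ is strict on an open set; then $(P_0 - P_g)G_{0,x}$ is strictly positive on an open set, the strong maximum principle upgrades $G - G_0 \ge 0$ to $G - G_0 > 0$ off the diagonal and in particular a strict inequality after evaluating the regular parts, yielding $\mu_x(x) > \mu^0_x(x) \ge 0$.

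The main obstacle I anticipate is establishing the comparison principle for the fourth-order operator $P_g$ cleanly — i.e. justifying that $P_g f \ge 0$ (or that the right-hand side distribution $(P_0 - P_g)G_{0,x}$ is nonnegative in the appropriate sense, given that $G_{0,x}$ is only $C^{0,\theta}$ on the diagonal) forces $f \ge 0$. This requires either the factorization of $P_g$ into two positive second-order pieces (available because $c - \tfrac{b^2}{4} < 0$, which is precisely the sign condition that makes the two roots of $t^2 - bt + c$ real and positive) together with regularity bootstrapping for the intermediate function, or a careful distributional integration by parts near the diagonal to control the singular term; the Hölder regularity of $\mu_x$ and $\mu^0_x$ asserted in \eqref{GreenFct} is what makes the evaluation $H_x(x) = \mu^0_x(x) - \mu_x(x)$ and the strict inequality at the end legitimate.
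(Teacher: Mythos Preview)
Your overall architecture --- compare $G$ to $G_0$ and invoke Humbert--Raulot for the mass of $P_0$ --- is exactly the paper's. The gap is in the step where you claim $(P_0-P_g)G_{0,x}\ge 0$ pointwise (``after an integration by parts''), and hence $G\ge G_0$ everywhere. This is not true: the second--order piece $-\mathrm{div}_g\big((A_g-bg)\nabla G_{0,x}\big)$ has no pointwise sign, and after integration by parts against the kernel $G(y,\cdot)$ you obtain
\[
\int_M (A_g-bg)\big(\nabla_z G(y,z),\nabla_z G_0(x,z)\big)\,dv_g(z),
\]
which again has no sign for $y\neq x$. So neither the ``positive kernel times nonnegative source'' argument nor a fourth--order comparison principle is available here. (You also invoke the factorization of $P_g$, which needs $c<b^2/4$; that hypothesis is not part of this theorem.)

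The paper sidesteps this by \emph{not} proving $G\ge G_0$ globally: it only evaluates the difference at the diagonal. Writing $\tau_x=G(x,\cdot)-G_0(x,\cdot)$ and using $P_g\tau_x=(P_0-P_g)G_0(x,\cdot)$, one has
\[
\tau_x(x)=\int_M G(x,\cdot)(P_0-P_g)G_0(x,\cdot)\,dv_g
=\int_M G_0(x,\cdot)(P_0-P_g)G_0(x,\cdot)\,dv_g+\int_M \tau_x\,P_g\tau_x\,dv_g.
\]
Now integration by parts in the first integral produces $(A_g-bg)(\nabla G_0,\nabla G_0)\ge 0$ and $(\tfrac12 Q_g-c)G_0^2\ge 0$, while the second integral equals $\int\big((\Delta_g\tau_x)^2+b|\nabla\tau_x|^2+c\tau_x^2\big)\ge 0$ since $b,c>0$. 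Hence $\tau_x(x)\ge 0$, and $\mu_x(x)=A+\tau_x(x)$ with $A$ the mass of $P_0$. The rigidity is then read off from this identity: $\mu_x(x)=0$ forces $A=0$ (sphere) \emph{and} all three integrals to vanish, which yields $\tau_x\equiv 0$, $Q_g\equiv 2c$ constant, and $(A_g-bg)(\nabla G_0,\nabla G_0)\equiv 0$; on a round sphere $A_g=kg$, so the last condition gives $k=b$. Your strong--maximum--principle route to strictness is therefore unnecessary (and would inherit the same sign problem).
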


Theorem \ref{PositMass} raises the question of the existence of $5$-dimensional compact conformally flat manifolds with 
positive Yamabe invariant and positive Green's function for $P_0$. By the analysis in Qing and Raske \cite{QinRas2} compact conformally flat
manifolds of positive Yamabe invariant and of Poincar\'e exponent less that $\frac{n-4}{2}=\frac{1}{2}$ are such that the  
Green's function of $P_0$ is positive. Explicit
examples of such manifolds are in Qing-Raske \cite{QinRas2}.

\medskip The paper is organized as follows. In Section \ref{PtEst} we establish sharp pointwise estimates 
for arbitrary sequences of solutions of \eqref{PertCritEqt}. 
This answers (Q1). 
Thanks to these estimates we prove Theorem \ref{LimitProfileThm} in Section \ref{ProofThm1}. Trace estimates are proved to hold in Section \ref{ProofTraceEst}. 
By the estimates in Sections \ref{PtEst} and \ref{ProofTraceEst} we can prove Theorem \ref{StabilityThm} in Section \ref{ProofTheorem2Part1} when $n \ge 6$. 
In Section \ref{ProofTheorem2Part2} we prove Theorem 
\ref{StabilityThm} in the specific case $n = 5$. Theorem \ref{StabilityThm} provides the answer to (Q2). 
We prove Theorem \ref{PositMass} in Section \ref{ProofPosMassThm}. 

\section{Pointwise estimates}\label{PtEst}

Let $(M,g)$ be a smooth compact Riemannian manifold of dimension $n \ge 5$, and $b, c > 0$ be positive real 
numbers such that $c - \frac{b^2}{4} < 0$. We do not need in this section to assume that 
$g$ is conformally flat. Let also $(b_\alpha)_\alpha$ and $(c_\alpha)_\alpha$ be converging sequences 
of real numbers with limits $b$ and $c$ as $\alpha \to \infty$, and $(u_\alpha)_\alpha$ be a bounded sequence in $H^2$ of 
positive nontrivial solutions of \eqref{PertCritEqt}. Up to a subsequence, $u_\alpha \rightharpoonup u_\infty$ weakly in $H^2$ 
as $\alpha \to +\infty$. By standard elliptic theory, either $u_\alpha \to u_\infty$ in $C^4$ or the $u_\alpha$'s blow up and
\begin{equation}\label{BlowUpAssumpt}
\Vert u_\alpha\Vert_{L^\infty} \to +\infty
\end{equation}
as $\alpha \to +\infty$. From now on we assume that \eqref{BlowUpAssumpt} holds true. 
By Hebey and Robert \cite{HebRob0} and Hebey, Robert and Wen \cite{HebRobWen}, there 
holds that
\begin{equation}\label{EqtBlowUp1}
u_\alpha = u_\infty + \sum_{i=1}^kB_\alpha^i + \mathcal{R}_\alpha\hskip.1cm ,
\end{equation}
where $\mathcal{R}_\alpha \to 0$ in $H^2$ as $\alpha \to \infty$, and the $B_\alpha^i$'s are bubble singularities in $H^2$. Such 
$B_\alpha^i$'s are given by
\begin{equation}\label{EqtBlowUp2}
B_\alpha^i = \eta\left(d_{i,\alpha}\right) \left(\frac{\mu_{i,\alpha}}{\mu_{i,\alpha}^2 + \frac{d_\alpha^2}{\sqrt{\lambda_n}}}\right)^{\frac{n-4}{2}}
\hskip.1cm ,
\end{equation}
where $\eta: \mathbb{R} \to \mathbb{R}$ is a smooth nonnegative cutoff function with small support (less than the injectivity 
radius of $g$), $d_{i,\alpha}(\cdot) = d_g(x_{i,\alpha},\cdot)$, $\lambda_n = n(n-4)(n^2-4)$, $k \ge 1$ is an integer, and for any $i$, $(x_{i,\alpha})_\alpha$ is 
a converging sequence of points in $M$ and $(\mu_{i,\alpha})_\alpha$ is a sequence of positive real numbers such that $\mu_{i,\alpha} \to 0$ 
as $\alpha \to +\infty$. Moreover, we also have that the following structure equation holds true: for any $i\not= j$,
\begin{equation}\label{EqtBlowUp3}
\frac{d_g(x_{i,\alpha},x_{j,\alpha})^2}{\mu_{i,\alpha}\mu_{j,\alpha}} + \frac{\mu_{i,\alpha}}{\mu_{j,\alpha}} + 
\frac{\mu_{i,\alpha}}{\mu_{j,\alpha}} \to +\infty
\end{equation}
as $\alpha \to +\infty$, and that there exists $C > 0$ such that
\begin{equation}\label{EqtBlowUp4}
r_\alpha(x)^{\frac{n-4}{2}}\left\vert u_\alpha(x) - u_\infty(x)\right\vert \le C
\end{equation}
for all $\alpha$ and all $x \in M$, where 
\begin{equation}\label{DefrAlpha}
r_\alpha(x) = \min_{i=1,\dots,k}d_g(x_{i,\alpha},x)
\hskip.1cm .
\end{equation}
By \eqref{EqtBlowUp1} and \eqref{EqtBlowUp4} we have that $u_\alpha \rightharpoonup u_\infty$ in $H^2$ and $u_\alpha \to u_\infty$ in 
$C^0_{loc}(M\backslash\mathcal{S})$, where $\mathcal{S}$ is the set consisting of the limits of the $x_{i,\alpha}$'s. 
We define $\mu_\alpha$ by
\begin{equation}\label{DefMuAlpha}
\mu_\alpha = \max_{i=1,\dots,k}\mu_{i,\alpha}
\hskip.1cm .
\end{equation}
There holds that $\mu_\alpha \to 0$ as $\alpha \to +\infty$ since $\mu_{i,\alpha} \to 0$ for all $i$ as 
$\alpha \to +\infty$. 
We aim here at proving the following sharp pointwise estimates.

\begin{prop}\label{SharpPtEst} Let $(M,g)$ be a smooth compact Riemannian manifold of dimension $n \ge 5$, and $b, c > 0$ be positive real 
numbers such that $c - \frac{b^2}{4} < 0$. Let also $(b_\alpha)_\alpha$ and $(c_\alpha)_\alpha$ be converging sequences 
of real numbers with limits $b$ and $c$ as $\alpha \to +\infty$, and $(u_\alpha)_\alpha$ be a bounded sequence in $H^2$ of 
positive nontrivial solutions of \eqref{PertCritEqt} satisfying \eqref{BlowUpAssumpt}. There exists $C > 0$ such that, up to a subsequence,
\begin{equation}\label{EqtMnPrp}
\left\vert\nabla^ju_\alpha\right\vert \le C\left(\mu_\alpha^{\frac{n-4}{2}}r_\alpha^{4-n-j} + \Vert u_\infty\Vert_{L^\infty}\right)
\end{equation}
in $M$, for all $j=0,1,2,3$ and all $\alpha$, where $r_\alpha$ is as in \eqref{DefrAlpha}, and $\mu_\alpha$ is as in \eqref{DefMuAlpha}.
\end{prop}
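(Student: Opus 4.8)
The plan is to establish \eqref{EqtMnPrp} first for $j=0$ by a weighted blow-up / Green's function argument, and then to bootstrap to the derivative bounds $j=1,2,3$ via standard elliptic estimates on rescaled balls. The starting point is the rough estimate \eqref{EqtBlowUp4}, which already gives $r_\alpha^{(n-4)/2}|u_\alpha-u_\infty|\le C$; the task is to upgrade the right-hand side to the sharp factor $\mu_\alpha^{(n-4)/2}r_\alpha^{4-n}$ (plus the harmless term $\|u_\infty\|_{L^\infty}$). I would argue by contradiction: suppose
$$
\sup_M \frac{|u_\alpha-u_\infty|}{\mu_\alpha^{(n-4)/2}r_\alpha^{4-n}+\|u_\infty\|_{L^\infty}} \to +\infty,
$$
and let $y_\alpha$ realize (up to a factor $2$) the supremum, setting $s_\alpha=r_\alpha(y_\alpha)$. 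From \eqref{EqtBlowUp4} one sees $s_\alpha^{(n-4)/2}$ must be comparable to a quantity forcing $s_\alpha/\mu_\alpha\to+\infty$, i.e.\ $y_\alpha$ sits in the "neck/far" region relative to every bubble. One then rescales the equation \eqref{PertCritEqt} around $y_\alpha$ at scale $s_\alpha$, normalizing by the value at $y_\alpha$; the rescaled functions satisfy a fourth order equation with coefficients converging (since $b_\alpha\to b$, $c_\alpha\to c$, and $s_\alpha\to 0$ kills the lower order terms after scaling) and, crucially, the nonlinear term's contribution is controlled because $s_\alpha^{(n-4)/2}|u_\alpha|$ is bounded on the relevant ball by \eqref{EqtBlowUp4}. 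Elliptic regularity gives convergence to a nonzero limit solving $\Delta_{\rm eucl}^2 v=0$ on $\mathbb{R}^n\setminus(\text{finite set})$ with an isolated-singularity structure; but the weighted normalization forces $v$ to vanish or to be dominated by a strictly larger weight, a contradiction.

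The cleaner route—and the one I expect the paper follows—is to replace the direct blow-up by a Green's function representation. Write $u_\alpha(x)-u_\infty(x)=\int_M G_\alpha(x,y)\,f_\alpha(y)\,dv_g(y)$, where $G_\alpha$ is the Green's function of $P_{g,\alpha}=\Delta_g^2+b_\alpha\Delta_g+c_\alpha$ (which is uniformly comparable to $d_g(x,y)^{4-n}$ since $c_\alpha-b_\alpha^2/4<0$ is preserved in the limit, guaranteeing a positive operator with positive Green's function), and $f_\alpha=u_\alpha^{2^\sharp-1}-u_\infty^{2^\sharp-1}-(\text{lower order corrections from }b_\alpha-b,\,c_\alpha-c)$. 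One then estimates $|f_\alpha(y)|$ using \eqref{EqtBlowUp1}–\eqref{EqtBlowUp4}: near each concentration point $x_{i,\alpha}$ the bubble $B_\alpha^i$ dominates and $|f_\alpha|\lesssim B_\alpha^{i\,2^\sharp-1}$, while away from $\mathcal{S}$ one has $|f_\alpha|\lesssim \|u_\infty\|_{L^\infty}^{2^\sharp-2}|u_\alpha-u_\infty|+o(1)$. Plugging into the integral and using the standard convolution estimate $\int_M d_g(x,y)^{4-n}\,(\mu_\alpha^2+d_g(x_{i,\alpha},y)^2)^{-(n+4)/2}\mu_\alpha^{(n+4)/2}\,dv_g(y)\lesssim \mu_\alpha^{(n-4)/2}r_\alpha(x)^{4-n}$ (valid for $n\ge5$, with the borderline behaviour in low dimensions handled by the explicit decay of the bubble profile) yields the $j=0$ bound; the $u_\infty$ contribution is absorbed into the $\|u_\infty\|_{L^\infty}$ term by a maximum principle / fixed point argument on the region away from $\mathcal{S}$.

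For the derivative estimates, fix $x$ and $\alpha$, set $\rho=\tfrac12 r_\alpha(x)$, and distinguish two regimes. If $\rho\le \mu_\alpha$, then $x$ is in the core of some bubble and one rescales: $v_\alpha(z)=\mu_\alpha^{(n-4)/2}u_\alpha(\exp_{x_{i,\alpha}}(\mu_\alpha z))$ solves a fourth order equation with bounded coefficients and bounded right-hand side on a fixed ball (by the $j=0$ estimate just proved), so interior $W^{4,p}$ and then $C^{3,\theta}$ estimates bound $|\nabla^j v_\alpha|$, which scales back to $|\nabla^j u_\alpha(x)|\lesssim \mu_\alpha^{(n-4)/2}\mu_\alpha^{-j}\sim \mu_\alpha^{(n-4)/2}r_\alpha(x)^{4-n-j}$ on that scale (and $r_\alpha^{4-n-j}\gtrsim \mu_\alpha^{4-n-j}$ when $r_\alpha\lesssim\mu_\alpha$, absorbing constants appropriately after noting $r_\alpha(x)\le C\mu_\alpha$). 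If $\rho\ge \mu_\alpha$, rescale instead at scale $\rho$: $w_\alpha(z)=\rho^{(n-4)/2}\mu_\alpha^{-(n-4)/2}\cdot(\text{something})$—more precisely set $\tilde w_\alpha(z)=u_\alpha(\exp_x(\rho z))$ on $B_0(1)$, multiply the $j=0$ bound through, apply elliptic estimates to gain $j$ derivatives with the $\rho^{-j}$ scaling factor, and translate back. In both regimes the term $\|u_\infty\|_{L^\infty}$ and its derivatives (controlled since $u_\infty$ is a fixed smooth solution of \eqref{CritEqt}) contribute only the additive $\|u_\infty\|_{L^\infty}$ piece after using $\nabla^j u_\infty$ is bounded independently of $\alpha$.

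The main obstacle I anticipate is the sharp convolution estimate feeding the $j=0$ bound in the low dimensions $n=5,6,7$, where the exponent $(n+4)/2$ relative to $4-n$ is delicate and the naive integration of $d_g^{4-n}$ against the bubble power is close to borderline; one must exploit the precise form \eqref{EqtBlowUp2} of $B_\alpha^i$, the structure equation \eqref{EqtBlowUp3} separating distinct bubbles (to avoid double-counting overlapping necks), and possibly iterate the estimate (a first pass with the rough \eqref{EqtBlowUp4}, then a second pass with the improved weight) to close the bootstrap. A secondary subtlety is justifying that $\mathcal{R}_\alpha\to0$ in $H^2$ from \eqref{EqtBlowUp1} is strong enough to be upgraded to the pointwise control needed on $f_\alpha$, which again is where \eqref{EqtBlowUp4} does the heavy lifting.
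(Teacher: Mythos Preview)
Your overall architecture (Green's representation for $j=0$, then rescaled elliptic estimates for $j=1,2,3$) matches the paper's, but there is a genuine gap in how you propose to obtain the sharp $j=0$ bound. Starting only from the rough scale-invariant estimate \eqref{EqtBlowUp4}, you have $u_\alpha^{2^\sharp-1}\lesssim r_\alpha^{-(n+4)/2}$, and convolving this against the Green kernel $d_g(x,y)^{4-n}$ via Giraud's lemma returns precisely $r_\alpha(x)^{-(n-4)/2}$, i.e.\ the \emph{same} rough bound (this is exactly what the paper's Lemma~\ref{Lem3bis} does). The bubble-type estimate $|f_\alpha|\lesssim (B_\alpha^i)^{2^\sharp-1}$ you invoke in the neck region is not a consequence of \eqref{EqtBlowUp4}; it is essentially what you are trying to prove. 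Your remark ``possibly iterate'' acknowledges the difficulty but does not resolve it: a naive bootstrap does not improve the weight because the exponent is exactly borderline.

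The paper closes this gap by a genuinely different device that your proposal does not contain. One introduces the auxiliary second-order quantity $v_\alpha=\Delta_gu_\alpha+\tfrac{b_\alpha}{2}u_\alpha$, which satisfies $\Delta_gv_\alpha+\tfrac{b_\alpha}{2}v_\alpha=\tilde c_\alpha u_\alpha+u_\alpha^{2^\sharp-1}$ with $\tilde c_\alpha\ge 0$. A maximum-principle argument (Lemma~\ref{Lem4}) yields the crucial lower bounds $v_\alpha\ge C_1 u_\alpha^{2^\sharp/2}$ and $v_\alpha\ge C_2 u_\alpha$. These are then fed into a \emph{barrier comparison} (Lemma~\ref{Lem5}): one compares $v_\alpha$ with explicit test functions built from powers $G_1'(x_{i,\alpha},\cdot)^{1-\varepsilon}$ and $G_1'(x_{i,\alpha},\cdot)^{\varepsilon}$ of a second-order Green function, obtaining first $\varepsilon$-sharp estimates
\[
u_\alpha\le C_\varepsilon\bigl(\mu_\alpha^{\frac{n-4}{2}-(n-2)\varepsilon}r_\alpha^{4-n+(n-2)\varepsilon}+\eta_\alpha(\delta_\varepsilon)\bigr)
\]
outside balls of radius $R_\varepsilon\mu_{i,\alpha}$. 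Only \emph{after} this intermediate step does the Green's-representation convolution close: with the $\varepsilon$-sharp bound on $u_\alpha^{2^\sharp-1}$ in hand, the integrals against $d_g(x,\cdot)^{4-n-j}$ produce the true weight $\mu_\alpha^{(n-4)/2}r_\alpha^{4-n-j}$ for all $j=0,1,2,3$ simultaneously. A separate argument then shows $\eta_\alpha(\delta)\le C\mu_\alpha^{(n-4)/2}$ when $u_\infty\equiv 0$, allowing the replacement by $\|u_\infty\|_{L^\infty}$. Your rescaling scheme for the derivative bounds would work once the $j=0$ estimate is in hand, but the paper handles all $j$ uniformly through the same Green integral, which is cleaner.
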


We split the proof of Proposition \ref{SharpPtEst} into several lemmas. The first lemma we prove 
is a general basic result we will use further in the proof of Lemma \ref{Lem2}.

\begin{lem}\label{Lem1} Let $\delta > 0$ and $(g_\alpha)_\alpha$ be a sequence of Riemannian metrics in the Euclidean ball 
$B_0(2\delta)$ such that $g_\alpha \to \xi$ in $C^4_{loc}\left(B_0(2\delta)\right)$ as $\alpha \to +\infty$, where 
$\xi$ is the Euclidean metric. Let $(b_\alpha)_\alpha$ and $(c_\alpha)_\alpha$ be bounded sequences of positive real numbers such that 
$c_\alpha \le \frac{b_\alpha^2}{4}$ for all $\alpha$. Let $(w_\alpha)_\alpha$ be a sequence of positive functions such that
\begin{equation}\label{EqtBlowUp6}
\Delta_{g_\alpha}^2w_\alpha + b_\alpha\Delta_{g_\alpha}w_\alpha + c_\alpha w_\alpha = w_\alpha^{2^\sharp-1}
\end{equation}
in $B_0(2\delta)$ for all $\alpha$. Assume $\Vert w_\alpha\Vert_{L^\infty\left(B_0(\delta)\right)} \to +\infty$ as $\alpha \to +\infty$ 
and that there exists $C > 0$ such that $\Vert w_\alpha\Vert_{L^\infty\left(B_0(\delta)\backslash B_0(\delta/2)\right)} \le C$ 
for all $\alpha$. Then 
\begin{equation}\label{EqtBlowUp7}
\int_{B_0(\delta)}w_\alpha^{2^\sharp}dx \ge \left(1+o(1)\right)K_n^{-n/4}
\hskip.1cm ,
\end{equation}
where $o(1) \to 0$ as $\alpha \to +\infty$ and $K_n$ is the sharp constant in the Euclidean Sobolev inequality $\Vert u\Vert_{2^\sharp}^2 \le K_n\Vert\Delta u\Vert_2^2$,  
$u \in \dot H^2$. Here  $\dot H^2$ is the completion of $C^\infty_0(\mathbb{R}^n)$ with respect to $u \to \Vert\Delta u\Vert_2$. 
\end{lem}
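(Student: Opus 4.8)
\emph{The plan is to extract a nontrivial bubble from $w_\alpha$ by a rescaling argument, show it converges to a standard extremal for the Euclidean Sobolev inequality, and then conclude by lower semicontinuity of the $L^{2^\sharp}$-norm together with the value of the best constant on the limiting bubble.} First I would set $\nu_\alpha := \Vert w_\alpha\Vert_{L^\infty(B_0(\delta))}^{-2/(n-4)} \to 0$ and pick $y_\alpha \in \overline{B_0(\delta)}$ with $w_\alpha(y_\alpha) = \Vert w_\alpha\Vert_{L^\infty(B_0(\delta))}$. Since $w_\alpha$ is uniformly bounded on the annulus $B_0(\delta)\setminus B_0(\delta/2)$, the concentration point stays in the interior: $y_\alpha \to y_\infty \in \overline{B_0(\delta/2)}$, and in fact $\mathrm{dist}(y_\alpha, \partial B_0(\delta)) \ge \delta/2$. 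Then I would define the rescaled functions
\begin{equation*}
\tilde w_\alpha(x) = \nu_\alpha^{\frac{n-4}{2}} w_\alpha\left(y_\alpha + \nu_\alpha x\right)
\end{equation*}
on balls $B_0(R)$ for arbitrary fixed $R>0$ (legitimate once $\alpha$ is large, since $\nu_\alpha R < \delta/2$). By construction $0 < \tilde w_\alpha \le 1 = \tilde w_\alpha(0)$, and $\tilde w_\alpha$ solves
\begin{equation*}
\Delta_{\tilde g_\alpha}^2 \tilde w_\alpha + \nu_\alpha^2 b_\alpha \Delta_{\tilde g_\alpha}\tilde w_\alpha + \nu_\alpha^4 c_\alpha \tilde w_\alpha = \tilde w_\alpha^{2^\sharp-1}
\end{equation*}
where $\tilde g_\alpha(x) = g_\alpha(y_\alpha + \nu_\alpha x) \to \xi$ in $C^4_{loc}$. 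Because $b_\alpha, c_\alpha$ are bounded and $\nu_\alpha \to 0$, the lower-order coefficients vanish in the limit.

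\emph{Next I would upgrade the trivial $L^\infty$ bound to local elliptic estimates and pass to the limit.} The right-hand side $\tilde w_\alpha^{2^\sharp-1}$ is bounded in $L^\infty_{loc}$, so by $W^{4,p}$ elliptic estimates for the fourth-order operator (with coefficients converging in $C^4_{loc}$) and Sobolev embedding, $\tilde w_\alpha$ is bounded in $C^{3,\theta}_{loc}(\mathbb{R}^n)$; after a diagonal extraction $\tilde w_\alpha \to \tilde w$ in $C^3_{loc}(\mathbb{R}^n)$, where $\tilde w \ge 0$ solves $\Delta^2 \tilde w = \tilde w^{2^\sharp-1}$ on all of $\mathbb{R}^n$ with $\tilde w(0) = 1$, hence $\tilde w \not\equiv 0$. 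A Fatou / weak-convergence argument gives $\tilde w \in \dot H^2(\mathbb{R}^n)$ with $\int_{\mathbb{R}^n} \tilde w^{2^\sharp}\,dx \le \liminf_\alpha \int_{\mathbb{R}^n}\tilde w_\alpha^{2^\sharp} < \infty$. Testing the limiting equation against $\tilde w$ (justified by the decay of $\tilde w$, which follows from the integrability and standard fourth-order potential estimates), I get $\Vert \Delta \tilde w\Vert_2^2 = \Vert \tilde w\Vert_{2^\sharp}^{2^\sharp}$, and plugging into the sharp Sobolev inequality $\Vert \tilde w\Vert_{2^\sharp}^2 \le K_n \Vert \Delta \tilde w\Vert_2^2 = K_n\Vert \tilde w\Vert_{2^\sharp}^{2^\sharp}$ yields, since $\Vert\tilde w\Vert_{2^\sharp} > 0$,
\begin{equation*}
\int_{\mathbb{R}^n}\tilde w^{2^\sharp}\,dx \ge K_n^{-n/4}.
\end{equation*}

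\emph{Finally I would transfer this mass back to $w_\alpha$.} For any fixed $R>0$, a change of variables gives $\int_{B_{y_\alpha}(\nu_\alpha R)} w_\alpha^{2^\sharp}\,dx_{g_\alpha} = \int_{B_0(R)} \tilde w_\alpha^{2^\sharp}\,dx_{\tilde g_\alpha}$, and since $B_{y_\alpha}(\nu_\alpha R) \subset B_0(\delta)$ for large $\alpha$, while $\tilde g_\alpha \to \xi$, we get
\begin{equation*}
\int_{B_0(\delta)} w_\alpha^{2^\sharp}\,dx \ge \int_{B_0(R)} \tilde w_\alpha^{2^\sharp}\,dx_{\tilde g_\alpha} = \left(1+o(1)\right)\int_{B_0(R)}\tilde w^{2^\sharp}\,dx + o(1);
\end{equation*}
letting $\alpha \to \infty$ and then $R \to \infty$, and using the lower bound on $\int_{\mathbb{R}^n}\tilde w^{2^\sharp}$, gives \eqref{EqtBlowUp7} (noting the statement's $dx$ is comparable to $dx_{g_\alpha}$ up to a $1+o(1)$ factor). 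The main obstacle I anticipate is the justification that the limit $\tilde w$ lies in $\dot H^2$ and that the Pohozaev/energy identity $\Vert\Delta\tilde w\Vert_2^2 = \Vert\tilde w\Vert_{2^\sharp}^{2^\sharp}$ holds on the noncompact domain $\mathbb{R}^n$ — this requires controlling the energy of $\tilde w_\alpha$ uniformly (which comes from the $H^2$-boundedness of the original $u_\alpha$, or can be re-derived from the bubble decomposition \eqref{EqtBlowUp1}) and ruling out loss of $\dot H^2$-mass at infinity; the fourth-order setting makes the maximum-principle-free decay estimates the delicate point, but these are by now standard for $\Delta^2 \tilde w = \tilde w^{2^\sharp-1}$.
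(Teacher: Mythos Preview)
Your argument follows the same skeleton as the paper's proof: rescale at a maximum point, use elliptic regularity to extract a limit $\tilde w$ solving $\Delta^2\tilde w=\tilde w^{2^\sharp-1}$ on $\mathbb{R}^n$ with $\tilde w(0)=1$, and then read off the energy lower bound by a change of variables. The only substantive difference is in how you extract the lower bound $\int_{\mathbb{R}^n}\tilde w^{2^\sharp}\ge K_n^{-n/4}$. The paper invokes Lin's classification \cite{Lin}, which applies to \emph{all} smooth positive solutions of $\Delta^2\tilde w=\tilde w^{2^\sharp-1}$ in $\mathbb{R}^n$ without any a priori energy bound, and identifies $\tilde w$ as the explicit ground state; this gives $\int_{\mathbb{R}^n}\tilde w^{2^\sharp}=K_n^{-n/4}$ directly and completely sidesteps the obstacle you flag about $\tilde w\in\dot H^2$. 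Your route via the energy identity and the sharp Sobolev inequality is also valid but does require $\tilde w\in\dot H^2$; since the lemma as stated carries no $H^2$ hypothesis on $w_\alpha$, you cannot import it from the ambient $u_\alpha$. The clean fix within your framework is: if $\liminf_\alpha\int_{B_0(\delta)}w_\alpha^{2^\sharp}dx=+\infty$ the conclusion is trivial, and otherwise Fatou gives $\tilde w\in L^{2^\sharp}(\mathbb{R}^n)$, whence $\Delta^2\tilde w=\tilde w^{2^\sharp-1}\in L^{2n/(n+4)}$ and Riesz potential estimates yield $\tilde w\in\dot H^2$. With that addition your proof is complete; the paper's appeal to Lin is simply the shorter path.
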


\begin{proof}[Proof of Lemma \ref{Lem1}] Let $\nu_\alpha$ be given by 
$$\nu_\alpha^{2-\frac{n}{2}} = \max_{\overline{B_0(\delta)}}w_\alpha$$
and $x_\alpha$ be such that $w_\alpha(x_\alpha) = \nu_\alpha^{2-\frac{n}{2}}$. By assumption, 
$\nu_\alpha \to 0$ as $\alpha \to +\infty$ and $x_\alpha \in B_0(\delta/2)$ for $\alpha \gg 1$. Let $\tilde w_\alpha$ be given by 
$$\tilde w_\alpha(x) = \nu_\alpha^{\frac{n-4}{2}}w_\alpha\left(x_\alpha + \nu_\alpha x\right)\hskip.1cm .$$
For any $R > 1$, $\tilde w_\alpha$ is defined in $B_0(R)$ provided $\alpha \gg 1$ is sufficiently large. Let $d_{1,\alpha}$ 
and $d_{2,\alpha}$ be given by
\begin{equation}\label{SplittCsts}
d_{1,\alpha} = \frac{b_\alpha}{2} + \sqrt{\frac{b_\alpha^2}{4}-c_\alpha}
\hskip.2cm\hbox{and}\hskip.2cm d_{2,\alpha} = \frac{b_\alpha}{2} - \sqrt{\frac{b_\alpha^2}{4}-c_\alpha}
\hskip.1cm .
\end{equation}
Then, as is easily checked, for any $R > 1$,
\begin{equation}\label{EqtLem1Pf1}
\begin{split}
&\Delta^2_{\tilde g_\alpha}\tilde w_\alpha + b_\alpha\nu_\alpha^2\Delta_{\tilde g_\alpha}\tilde w_\alpha + c_\alpha\nu_\alpha^4\tilde w_\alpha\\
&= \left(\Delta_{\tilde g_\alpha} + d_{1,\alpha}\nu_\alpha^2\right)\circ\left(\Delta_{\tilde g_\alpha} + d_{2,\alpha}\nu_\alpha^2\right)\tilde w_\alpha\\
&= \tilde w_\alpha^{2^\sharp-1}
\end{split}
\end{equation}
in $B_0(R)$ for all $\alpha \gg 1$ sufficiently large, where $\tilde g_\alpha(x) = g_\alpha(x_\alpha + \nu_\alpha x)$. 
Since $0 \le \tilde w_\alpha \le 1$, it follows from \eqref{EqtLem1Pf1} and classical elliptic 
theory, as developped in Gilbarg-Trudinger \cite{GilTru}, that the $\tilde w_\alpha$'s are bounded in $C^{4,\theta}_{loc}\left(\mathbb{R}^n\right)$, 
$\theta \in (0,1)$. In particular, there exists $\tilde w$ such that, up to a subsequence, $\tilde w_\alpha \to \tilde w$ in $C^4_{loc}\left(\mathbb{R}^n\right)$ as 
$\alpha \to +\infty$. By rescaling invariance rules there also holds that $\tilde w \in H^2$. Passing to the limit as $\alpha \to +\infty$ in \eqref{EqtLem1Pf1}, we get 
that
$$\Delta^2\tilde w = \tilde w^{2^\sharp-1}\hskip.1cm .$$
Since $\tilde w(0) = \max_{\mathbb{R}^n}\tilde w = 1$, it follows from Lin's classification \cite{Lin} that $\tilde w$ is the ground state in \eqref{GroundStateShape0} below. 
Noting that for any $R > 1$,
\begin{eqnarray*}
\int_{B_0(R)}\tilde w_\alpha^{2^\sharp}dv_{\tilde g_\alpha} 
&=& \int_{B_{x_\alpha}(R\nu_\alpha)}w_\alpha^{2^\sharp}dv_{g_\alpha}\\
&\le& \left(1+o(1)\right)\int_{B_0(\delta)}w_\alpha^{2^\sharp}dx\hskip.1cm ,
\end{eqnarray*}
we get from the $L^\infty_{loc}$-convergence $\tilde w_\alpha \to \tilde w$  that 
\begin{equation}\label{EqtLem1Pf12}
\liminf_{\alpha \to +\infty}\int_{B_0(\delta)}w_\alpha^{2^\sharp}dx \ge \int_{B_0(R)}\tilde w^{2^\sharp}dx\hskip.1cm .
\end{equation}
Noting that $\tilde w$ is an extremal function for the sharp inequality $\Vert u\Vert_{2^\sharp}^2 \le K_n\Vert\Delta u\Vert_2^2$, 
it is easily seen that $\int_{\mathbb{R}^n}\tilde w^{2^\sharp}dx = K_n^{-n/4}$. Letting $R \to +\infty$ in  \eqref{EqtLem1Pf12}, this ends the proof of the lemma.
\end{proof}

From now on we let $B: \mathbb{R}^n \to \mathbb{R}$ be the ground state 
given by
\begin{equation}\label{GroundStateShape0}
B(x) = \left(1 + \frac{\vert x\vert^2}{\sqrt{\lambda_n}}\right)^{-\frac{n-4}{2}}
\hskip.1cm ,
\end{equation}
where $\lambda_n$ is as in \eqref{EqtBlowUp2}. Given $x \in M$, $\mu > 0$ and $u: M \to \mathbb{R}$, we also 
define the function $R_x^\mu u$ by
\begin{equation}\label{GroundStateShape}
R_x^\mu u(y) = \mu^{\frac{n-4}{2}}u\left(\exp_x(\mu y)\right)
\hskip.1cm ,
\end{equation}
where $y \in \mathbb{R}^n$ is such that $\vert y\vert < \frac{i_g}{\mu}$, and $i_g > 0$ is the injectivity radius of $(M,g)$. For $(u_\alpha)_\alpha$ 
as above, and $i \in \left\{1,\dots,k\right\}$, we define $\mathcal{S}_{i,r}, \mathcal{S}_{i,t} \subset \mathbb{R}^n$ by
\begin{equation}\label{DefSsets}
\begin{split}
&\mathcal{S}_{i,t} = \left\{\lim_{\alpha \to +\infty}\frac{1}{\mu_{i,\alpha}}\exp_{x_{i,\alpha}}^{-1}(x_{j,\alpha}), j = 1,\dots,k\right\}\\
&\mathcal{S}_{i,r} = \left\{\lim_{\alpha \to +\infty}\frac{1}{\mu_{i,\alpha}}\exp_{x_{i,\alpha}}^{-1}(x_{j,\alpha}), j \in I_i\right\}
\hskip.1cm ,
\end{split}
\end{equation}
where $I_i$ is the subset of $\left\{1,\dots,k\right\}$ consisting in the $j$'s which are such that 
$d_g(x_{i,\alpha},x_{j,\alpha}) = O\left(\mu_{i,\alpha}\right)$ and $\mu_{j,\alpha} = o\left(\mu_{i,\alpha}\right)$. 
The second lemma we prove establishes local limits for the $u_\alpha$'s. 

\begin{lem}\label{Lem2} Let $(M,g)$ be a smooth compact Riemannian manifold of dimension $n \ge 5$, and $b, c > 0$ be positive real 
numbers such that $c - \frac{b^2}{4} < 0$. Let also $(b_\alpha)_\alpha$ and $(c_\alpha)_\alpha$ be converging sequences 
of real numbers with limits $b$ and $c$ as $\alpha \to +\infty$, and $(u_\alpha)_\alpha$ be a bounded sequence in $H^2$ of 
positive nontrivial solutions of \eqref{PertCritEqt} satisfying \eqref{BlowUpAssumpt}. Then, up to a subsequence,
\begin{equation}\label{EqtLem2}
R_{x_{i,\alpha}}^{\mu_{i,\alpha}}u_\alpha \to B
\end{equation}
in $C^4_{loc}\left(\mathbb{R}^n\backslash\mathcal{S}_{i,r}\right)$ as $\alpha \to +\infty$ for all $i$, where $\mathcal{S}_{i,r}$ is 
as in \eqref{DefSsets}.
\end{lem}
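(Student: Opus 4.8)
The plan is to run the usual rescaling--classification argument. Fix $i$ and let $\tilde g_\alpha$ be the metric $g$ read in $g$-geodesic normal coordinates centered at $x_{i,\alpha}$ and dilated by $\mu_{i,\alpha}$; since $(x_{i,\alpha})_\alpha$ converges and $\mu_{i,\alpha}\to0$, there holds $\tilde g_\alpha\to\xi$ in $C^k_{loc}(\mathbb{R}^n)$ for all $k$. Put $\tilde u_\alpha = R_{x_{i,\alpha}}^{\mu_{i,\alpha}}u_\alpha$. A direct computation from \eqref{PertCritEqt} gives that, on larger and larger balls of $\mathbb{R}^n$,
\[
\Delta_{\tilde g_\alpha}^2\tilde u_\alpha + b_\alpha\mu_{i,\alpha}^2\Delta_{\tilde g_\alpha}\tilde u_\alpha + c_\alpha\mu_{i,\alpha}^4\tilde u_\alpha = \tilde u_\alpha^{2^\sharp-1} ,
\]
and since $c < \frac{b^2}{4}$ forces $c_\alpha < \frac{b_\alpha^2}{4}$ for $\alpha$ large, the left-hand operator splits as $(\Delta_{\tilde g_\alpha} + d_{1,\alpha}\mu_{i,\alpha}^2)\circ(\Delta_{\tilde g_\alpha} + d_{2,\alpha}\mu_{i,\alpha}^2)$ with $d_{1,\alpha},d_{2,\alpha} > 0$ bounded, exactly as in \eqref{SplittCsts}--\eqref{EqtLem1Pf1}.

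The crucial point is the local bound: \emph{$(\tilde u_\alpha)_\alpha$ is bounded in $L^\infty_{loc}(\mathbb{R}^n\setminus\mathcal{S}_{i,r})$.} On the smaller open set $\mathbb{R}^n\setminus\mathcal{S}_{i,t}$ this is immediate from \eqref{EqtBlowUp4}: for $y$ in a compact set, $r_\alpha(\exp_{x_{i,\alpha}}(\mu_{i,\alpha}y))/\mu_{i,\alpha}$ stays bounded away from $0$, while $\mu_{i,\alpha}^{\frac{n-4}{2}}u_\infty(\exp_{x_{i,\alpha}}(\mu_{i,\alpha}y))\to0$ uniformly. To upgrade the bound to $\mathbb{R}^n\setminus\mathcal{S}_{i,r}$ one must control $\tilde u_\alpha$ near a point $y_0\in\mathcal{S}_{i,t}\setminus\mathcal{S}_{i,r}$, and this is where Lemma \ref{Lem1} enters. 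If $\tilde u_\alpha$ were unbounded near such a $y_0$, pick $\rho > 0$ with $\overline{B_{y_0}(2\rho)}\cap\mathcal{S}_{i,t} = \{y_0\}$, so that, by the previous step, $\tilde u_\alpha$ is bounded on the annulus $B_{y_0}(2\rho)\setminus B_{y_0}(\rho)$. Applying Lemma \ref{Lem1} to $\tilde u_\alpha(\cdot + y_0)$ --- whose coefficients $b_\alpha\mu_{i,\alpha}^2$, $c_\alpha\mu_{i,\alpha}^4$ are bounded, positive and satisfy $c_\alpha\mu_{i,\alpha}^4 \le \frac{1}{4}(b_\alpha\mu_{i,\alpha}^2)^2$ for $\alpha$ large --- we get $\int_{B_{x_{i,\alpha}}(O(\mu_{i,\alpha}))}u_\alpha^{2^\sharp}\,dv_g \ge (1+o(1))K_n^{-n/4}$, i.e. a whole bubble of energy at distance $O(\mu_{i,\alpha})$ from $x_{i,\alpha}$ and on a scale $O(\mu_{i,\alpha})$. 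Since \eqref{EqtBlowUp1} exhausts the $H^2$-energy, such a concentration must occur at one of the $x_{j,\alpha}$; its scale being $O(\mu_{i,\alpha})$ together with $d_g(x_{i,\alpha},x_{j,\alpha}) = O(\mu_{i,\alpha})$, the structure equation \eqref{EqtBlowUp3} (which excludes comparable scales at bounded rescaled distance) forces $\mu_{j,\alpha} = o(\mu_{i,\alpha})$, hence $j\in I_i$ and $y_0\in\mathcal{S}_{i,r}$ --- a contradiction.

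Granted this bound, elliptic theory closes the loop. Setting $v_\alpha = (\Delta_{\tilde g_\alpha} + d_{2,\alpha}\mu_{i,\alpha}^2)\tilde u_\alpha$, the equation reads $(\Delta_{\tilde g_\alpha} + d_{1,\alpha}\mu_{i,\alpha}^2)v_\alpha = \tilde u_\alpha^{2^\sharp-1}$, with right-hand side bounded in $L^\infty_{loc}(\mathbb{R}^n\setminus\mathcal{S}_{i,r})$; so $v_\alpha$ is bounded in $W^{2,p}_{loc}$, then $\tilde u_\alpha$ in $W^{4,p}_{loc}$ for all $p$, then in $C^{4,\theta}_{loc}(\mathbb{R}^n\setminus\mathcal{S}_{i,r})$ by Sobolev embedding and Schauder estimates. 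Hence, up to a subsequence, $\tilde u_\alpha\to\tilde u$ in $C^4_{loc}(\mathbb{R}^n\setminus\mathcal{S}_{i,r})$ with $\tilde u\ge0$ and $\Delta^2\tilde u = \tilde u^{2^\sharp-1}$ there. To identify $\tilde u$, apply $R_{x_{i,\alpha}}^{\mu_{i,\alpha}}$ to \eqref{EqtBlowUp1}: one has $R_{x_{i,\alpha}}^{\mu_{i,\alpha}}u_\infty\to0$ and $R_{x_{i,\alpha}}^{\mu_{i,\alpha}}B_\alpha^i\to B$ uniformly on compact sets, $R_{x_{i,\alpha}}^{\mu_{i,\alpha}}B_\alpha^j\to0$ for $j\ne i$ uniformly on compact subsets of $\mathbb{R}^n\setminus\mathcal{S}_{i,t}$ (by \eqref{EqtBlowUp3}), and $R_{x_{i,\alpha}}^{\mu_{i,\alpha}}\mathcal{R}_\alpha\to0$ in $L^{2^\sharp}(\mathbb{R}^n)$ since $R_{x_{i,\alpha}}^{\mu_{i,\alpha}}$ asymptotically preserves the $L^{2^\sharp}$-norm and $\mathcal{R}_\alpha\to0$ in $H^2\hookrightarrow L^{2^\sharp}$. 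Summing, $\tilde u_\alpha\to B$ in $L^{2^\sharp}_{loc}(\mathbb{R}^n\setminus\mathcal{S}_{i,t})$; comparing with the $C^4_{loc}$ limit gives $\tilde u = B$ a.e. on $\mathbb{R}^n\setminus\mathcal{S}_{i,t}$, hence $\tilde u\equiv B$ on $\mathbb{R}^n\setminus\mathcal{S}_{i,r}$ by continuity, $\mathcal{S}_{i,r}\subseteq\mathcal{S}_{i,t}$ being finite. As $B$ is the only possible limit, the convergence needs no subsequence for each fixed $i$, and a diagonal extraction over $i = 1,\dots,k$ yields \eqref{EqtLem2}. (Alternatively the last step can avoid \eqref{EqtBlowUp1}: using the uniform $H^2$-bound and Fatou one checks $\tilde u\in\dot H^2$, the finitely many punctures in $\mathcal{S}_{i,r}$ are removable for $n\ge5$, and the normalization of $(x_{i,\alpha},\mu_{i,\alpha})$ together with Lin's classification \cite{Lin} identifies $\tilde u$ with $B$.)

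The genuine obstacle is the local $L^\infty$-bound above --- showing that the set that must be excised is exactly $\mathcal{S}_{i,r}$ and not the larger $\mathcal{S}_{i,t}$. This rests on a careful comparison of the relative scales $\mu_{j,\alpha}/\mu_{i,\alpha}$ and rescaled positions $d_g(x_{i,\alpha},x_{j,\alpha})/\mu_{i,\alpha}$ against \eqref{EqtBlowUp3}, and on Lemma \ref{Lem1} turning any hypothetical blow-up into a fixed quantum $K_n^{-n/4}$ of energy which \eqref{EqtBlowUp1} cannot accommodate unless the underlying center is one of the $x_{j,\alpha}$ with $j\in I_i$. The remaining ingredients --- the rescaled equation, the elliptic bootstrap and the matching of limits --- are routine.
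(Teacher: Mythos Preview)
Your proof is correct and follows the same overall strategy as the paper: establish the rescaled equation, obtain a local $L^\infty$ bound on $\mathbb{R}^n\setminus\mathcal{S}_{i,t}$ from \eqref{EqtBlowUp4}, upgrade it to $\mathbb{R}^n\setminus\mathcal{S}_{i,r}$ via Lemma~\ref{Lem1}, bootstrap to $C^{4,\theta}_{loc}$, and identify the limit using the decomposition \eqref{EqtBlowUp1}.

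The one place where your argument is slightly less direct than the paper's is the upgrade step. You argue by contradiction: a hypothetical blow-up at $y_0\in\mathcal{S}_{i,t}\setminus\mathcal{S}_{i,r}$ produces, via Lemma~\ref{Lem1}, a quantum $K_n^{-n/4}$ of $L^{2^\sharp}$-mass which ``must be accounted for by one of the $x_{j,\alpha}$'', and then \eqref{EqtBlowUp3} forces $j\in I_i$. This is morally right but the phrase in quotes is heuristic. The paper bypasses it: using \eqref{EqtBlowUp3} one checks directly that $R_{x_{i,\alpha}}^{\mu_{i,\alpha}}\sum_j B_\alpha^j\to B$ in $L^\infty_{loc}(\mathbb{R}^n\setminus\mathcal{S}_{i,r})$ (not just on $\mathbb{R}^n\setminus\mathcal{S}_{i,t}$), which combined with \eqref{L1.2Eqt3} yields
\[
\lim_{\delta\to0}\limsup_{\alpha\to+\infty}\int_{B_x(\delta)}\tilde u_\alpha^{2^\sharp}\,dx=0\qquad\text{for every }x\in\mathbb{R}^n\setminus\mathcal{S}_{i,r},
\]
and the contrapositive of Lemma~\ref{Lem1} gives the $L^\infty$ bound at once. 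This is the same contradiction as yours, just obtained by a direct energy computation rather than an informal ``energy accounting'' argument; it would tighten your write-up to state the $L^\infty_{loc}(\mathbb{R}^n\setminus\mathcal{S}_{i,r})$ convergence of the rescaled bubbles first and use it this way.
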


\begin{proof}[Proof of Lemma \ref{Lem2}] First we claim that for any $i$ and any $K \subset\subset \mathbb{R}^n\backslash \mathcal{S}_{i,t}$, 
there exists $C_K > 0$ such that 
\begin{equation}\label{L1.2Eqt1}
\left\vert R_{x_{i,\alpha}}^{\mu_{i,\alpha}}u_\alpha\right\vert \le C_K
\end{equation}
in $K$, for all $\alpha \gg 1$ sufficiently large. Fix $i$ and $K$. For any $x \in K$, and any $j$, 
\begin{eqnarray*} d_g\left(\exp_{x_{i,\alpha}}(\mu_{i,\alpha}x),x_{j,\alpha}\right)
&\ge& C\left\vert\mu_{i,\alpha}x - \exp_{x_{i,\alpha}}^{-1}(x_{j,\alpha})\right\vert\\
&\ge& C \mu_{i,\alpha}\left\vert x - \frac{1}{\mu_{i,\alpha}}\exp_{x_{i,\alpha}}^{-1}(x_{j,\alpha})\right\vert\\
&\ge& C\mu_{i,\alpha}
\end{eqnarray*}
by the definition of $\mathcal{S}_{i,t}$ in \eqref{DefSsets}. Then \eqref{L1.2Eqt1} follows from \eqref{EqtBlowUp4}. Also, by direct 
computations, using the structure equation \eqref{EqtBlowUp3}, 
there holds that for any $i\not= j$, $R_{x_{i,\alpha}}^{\mu_{i,\alpha}}B_\alpha^j \to 0$ in $L^\infty_{loc}\left(\mathbb{R}^n\backslash\mathcal{S}_{i,r}\right)$ 
as $\alpha \to +\infty$, where the $B_\alpha^i$'s are as in \eqref{EqtBlowUp2}. Noting that $R_{x_{i,\alpha}}^{\mu_{i,\alpha}}B_\alpha^i(x) = \eta(\mu_{i,\alpha}x)B(x)$, where 
$\eta$ is as in \eqref{EqtBlowUp2} and $B$ is as in 
\eqref{GroundStateShape0}, we get that for any $i$,
\begin{equation}\label{L1.2Eqt2}
R_{x_{i,\alpha}}^{\mu_{i,\alpha}}\sum_{j=1}^kB_\alpha^j \to B
\end{equation}
in $L^\infty_{loc}\left(\mathbb{R}^n\backslash\mathcal{S}_{i,r}\right)$ as $\alpha \to +\infty$. Now we prove \eqref{EqtLem2}. 
By \eqref{EqtBlowUp1}, 
\begin{equation}\label{L1.2Eqt3}
R_{x_{i,\alpha}}^{\mu_{i,\alpha}}u_\alpha - R_{x_{i,\alpha}}^{\mu_{i,\alpha}}\sum_{j=1}^kB_\alpha^j \to 0
\end{equation}
in $L^{2^\sharp}_{loc}(\mathbb{R}^n)$ as $\alpha \to +\infty$. Moreover, in any compact subset of $\mathbb{R}^n$,
\begin{equation}\label{L1.2Eqt4}
\Delta_{g_\alpha}^2\tilde u_\alpha + b_\alpha\mu_{i,\alpha}^2\Delta_{g_\alpha}\tilde u_\alpha + c_\alpha\mu_{i,\alpha}^4\tilde u_\alpha 
= \tilde u_\alpha^{2^\sharp-1}
\end{equation}
for $\alpha \gg 1$ sufficiently large, where $\tilde u_\alpha = R_{x_{i,\alpha}}^{\mu_{i,\alpha}}u_\alpha$ and $g_\alpha(x) = \bigl(\exp_{x_{i,\alpha}}^\star g\bigr)(\mu_{i,\alpha}x)$. 
Since $\mu_{i,\alpha} \to 0$, we have that $g_\alpha \to \xi$ in $C^4_{loc}(\mathbb{R}^n)$ as $\alpha \to +\infty$. By \eqref{L1.2Eqt1}, the sequence $(\tilde u_\alpha)_\alpha$ is bounded 
in $L^\infty_{loc}(\mathbb{R}^n\backslash\mathcal{S}_{i,t})$. By \eqref{L1.2Eqt2} and \eqref{L1.2Eqt3}, there also holds that
\begin{equation}\label{L1.2Eqt5}
\lim_{\delta \to 0}\limsup_{\alpha\to+\infty} \int_{B_x(\delta)}\tilde u_\alpha^{2^\sharp}dx = 0
\end{equation}
for all $x \in \mathbb{R}^n\backslash\mathcal{S}_{i,r}$. By Lemma \ref{Lem1} we then get that the sequence $(\tilde u_\alpha)_\alpha$ is actually bounded 
in $L^\infty_{loc}(\mathbb{R}^n\backslash\mathcal{S}_{i,r})$. By \eqref{L1.2Eqt4} and elliptic theory it follows that $(\tilde u_\alpha)_\alpha$ is bounded 
in $C^{4,\theta}_{loc}(\mathbb{R}^n\backslash\mathcal{S}_{i,r})$, $\theta \in (0,1)$. This ends the proof of Lemma \ref{Lem2}.
\end{proof}

The following lemma establishes pointwise estimates for the $u_\alpha$'s. The estimates in Lemma \ref{Lem3} are a trace extension of the estimates 
\eqref{EqtBlowUp4}. In particular, as is easily checked, \eqref{PtseEstLem3} below implies \eqref{EqtBlowUp4}.

\begin{lem}\label{Lem3} Let $(M,g)$ be a smooth compact Riemannian manifold of dimension $n \ge 5$, and $b, c > 0$ be positive real 
numbers such that $c - \frac{b^2}{4} < 0$. Let also $(b_\alpha)_\alpha$ and $(c_\alpha)_\alpha$ be converging sequences 
of real numbers with limits $b$ and $c$ as $\alpha \to +\infty$, and $(u_\alpha)_\alpha$ be a bounded sequence in $H^2$ of 
positive nontrivial solutions of \eqref{PertCritEqt} satisfying \eqref{BlowUpAssumpt}. Then, up to a subsequence,
\begin{equation}\label{PtseEstLem3}
r_\alpha^4\left\vert u_\alpha - u_\infty - \sum_{i=1}^kB_\alpha^i\right\vert^{2^\sharp-2} \to 0
\end{equation}
in $L^\infty(M)$ as $\alpha \to +\infty$, where $r_\alpha$ is as in \eqref{DefrAlpha}, and the $B_\alpha^i$'s are as in \eqref{EqtBlowUp2}.
\end{lem}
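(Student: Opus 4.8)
The plan is to prove \eqref{PtseEstLem3} by a blow-up/contradiction argument combined with the local convergence of Lemma \ref{Lem2} and the bubble-tree structure of $u_\alpha$. Set $\varphi_\alpha = u_\alpha - u_\infty - \sum_{i=1}^kB_\alpha^i$. Since $2^\sharp - 2 = 8/(n-4)$, the claim amounts to showing that $r_\alpha^{(n-4)/2}\vert\varphi_\alpha\vert \to 0$ uniformly, which is the refinement of \eqref{EqtBlowUp4} where the constant $C$ is replaced by $o(1)$ once the bubbles are subtracted. Suppose, by contradiction, that there exist $\delta_0 > 0$ and a sequence of points $y_\alpha \in M$ with $r_\alpha(y_\alpha)^{(n-4)/2}\vert\varphi_\alpha(y_\alpha)\vert \ge \delta_0$ for all $\alpha$. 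Away from the blow-up set $\mathcal{S}$ we already know $u_\alpha \to u_\infty$ in $C^0_{loc}$ and each $B_\alpha^i \to 0$ there, so $r_\alpha(y_\alpha) \to 0$, i.e. $y_\alpha$ approaches one of the concentration points.

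First I would relabel so that the index realizing $r_\alpha(y_\alpha) = d_g(x_{i_\alpha,\alpha}, y_\alpha)$ can, after passing to a subsequence, be taken independent of $\alpha$; call it $i$. Write $y_\alpha = \exp_{x_{i,\alpha}}(\mu_{i,\alpha} z_\alpha)$ with $z_\alpha \in \mathbb{R}^n$, and split into the two regimes $\vert z_\alpha\vert$ bounded and $\vert z_\alpha\vert \to +\infty$. In the bounded regime, up to a subsequence $z_\alpha \to z_\infty$; if $z_\infty \notin \mathcal{S}_{i,r}$, then by Lemma \ref{Lem2} we have $R_{x_{i,\alpha}}^{\mu_{i,\alpha}}u_\alpha \to B$ in $C^4_{loc}$ near $z_\infty$, and by the computation $R_{x_{i,\alpha}}^{\mu_{i,\alpha}}B_\alpha^i(x) = \eta(\mu_{i,\alpha}x)B(x)$ together with $R_{x_{i,\alpha}}^{\mu_{i,\alpha}}B_\alpha^j \to 0$ for $j\neq i$ (both recalled in the proof of Lemma \ref{Lem2}) and $\mu_{i,\alpha}^{(n-4)/2}u_\infty(\exp_{x_{i,\alpha}}(\mu_{i,\alpha}\cdot)) \to 0$, we get $R_{x_{i,\alpha}}^{\mu_{i,\alpha}}\varphi_\alpha \to 0$ in $C^0_{loc}$ near $z_\infty$. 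Since $r_\alpha(y_\alpha)^{(n-4)/2}\vert\varphi_\alpha(y_\alpha)\vert \le (\mu_{i,\alpha}\vert z_\alpha\vert)^{(n-4)/2}\vert\varphi_\alpha(y_\alpha)\vert \le \vert z_\alpha\vert^{(n-4)/2}\cdot\vert R_{x_{i,\alpha}}^{\mu_{i,\alpha}}\varphi_\alpha(z_\alpha)\vert$ and $\vert z_\alpha\vert$ stays bounded, this tends to $0$, a contradiction. The case $z_\infty \in \mathcal{S}_{i,r}$ must be handled by descending to the bubble of smaller scale: there is $j \in I_i$ with $\mu_{j,\alpha}/\mu_{i,\alpha} \to 0$ and $\frac1{\mu_{i,\alpha}}\exp_{x_{i,\alpha}}^{-1}(x_{j,\alpha}) \to z_\infty$, and one iterates the argument in the chart based at $x_{j,\alpha}$ with scale $\mu_{j,\alpha}$; the bubble-tree being finite, this iteration terminates. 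When $\vert z_\alpha\vert \to +\infty$ while $r_\alpha(y_\alpha) = O(\mu_{i,\alpha})\cdot\vert z_\alpha\vert$ stays, say, $\le\varepsilon_\alpha i_g$, one uses the weak pointwise control \eqref{EqtBlowUp4} (which, recall, \eqref{PtseEstLem3} is stated to imply, but which also holds in the other direction as input) together with the fact that on the annular region $\mu_{i,\alpha}\ll d_g(x_{i,\alpha},\cdot)\ll 1$ the bubble $B_\alpha^i$ behaves like $(\mu_{i,\alpha}/\lambda_n^{1/2})^{(n-4)/2}d_g(x_{i,\alpha},\cdot)^{4-n}$, so $r_\alpha^{(n-4)/2}B_\alpha^i = O(\mu_{i,\alpha}^{(n-4)/2}r_\alpha^{-(n-4)/2})\to 0$ there; hence in this regime $\varphi_\alpha$ and $u_\alpha - u_\infty$ differ by a negligible amount and one reduces to \eqref{EqtBlowUp4} plus a refined Harnack/rescaling argument on the neck.

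The genuinely delicate regime is the last one: the \emph{neck} between consecutive bubble scales, where $y_\alpha$ sits at a distance from $x_{i,\alpha}$ which is neither $O(\mu_{i,\alpha})$ nor bounded below, and where none of the single-bubble asymptotics alone controls $r_\alpha^{(n-4)/2}\varphi_\alpha$. Here I would rescale precisely at the scale $s_\alpha := r_\alpha(y_\alpha)$, set $\hat u_\alpha(x) = s_\alpha^{(n-4)/2}u_\alpha(\exp_{y_\alpha}(s_\alpha x))$, and note that $\hat u_\alpha$ solves a rescaled Paneitz equation \eqref{L1.2Eqt4}-type with vanishing lower-order coefficients; the uniform bound \eqref{EqtBlowUp4} gives $\vert\hat u_\alpha(x)\vert \le C(r_\alpha(\exp_{y_\alpha}(s_\alpha x))/s_\alpha)^{-(n-4)/2}$, which is locally bounded away from the (finitely many) rescaled concentration points, so by elliptic theory $\hat u_\alpha \to \hat u$ in $C^4_{loc}$ of the complement, with $\Delta^2\hat u = \hat u^{2^\sharp-1}$ and $\hat u \ge 0$. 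The point is that after subtracting the rescaled bubbles and the rescaled $u_\infty$, the limit of $s_\alpha^{(n-4)/2}\varphi_\alpha$ must be a solution of $\Delta^2 v = 0$ that is dominated by $r^{4-n}$ near each singular point and tends to $0$ at infinity, hence $v \equiv 0$ by a Liouville/removable-singularity argument for the bi-Laplacian — contradicting $\vert v(0)\vert \ge \delta_0 > 0$. Carrying out this last step cleanly — identifying exactly which rescaled bubbles survive in the neck limit, checking the decay hypotheses for the Liouville theorem, and making the bubble-tree induction rigorous so that every location of $y_\alpha$ is covered — is where the real work lies; the rest is bookkeeping built on Lemma \ref{Lem2} and the structure equations \eqref{EqtBlowUp1}-\eqref{EqtBlowUp4}.
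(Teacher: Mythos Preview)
Your outline has the right architecture --- contradiction, rescale, pass to a limit, derive an impossibility --- but the neck argument contains a genuine error. You claim that after rescaling at scale $s_\alpha=r_\alpha(y_\alpha)$ and subtracting the rescaled bubbles and $u_\infty$, the limit $v$ of $s_\alpha^{(n-4)/2}\varphi_\alpha$ solves $\Delta^2 v=0$. This is false. In the neck every rescaled bubble $s_\alpha^{(n-4)/2}B_\alpha^j(\exp_{y_\alpha}(s_\alpha\,\cdot\,))$ tends to $0$ in $C^0_{loc}$ away from the rescaled concentration points (the smaller-scale bubbles collapse because $(\mu_{j,\alpha}/s_\alpha)^{(n-4)/2}\to 0$, the larger-scale ones flatten because $(s_\alpha/\mu_{j,\alpha})^{(n-4)/2}\to 0$), and $s_\alpha^{(n-4)/2}u_\infty\to 0$ as well. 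Hence the limit of the rescaled $\varphi_\alpha$ is exactly the limit $\hat u$ of the rescaled $u_\alpha$, and $\hat u$ is a nonnegative solution of the \emph{nonlinear} equation $\Delta^2\hat u=\hat u^{2^\sharp-1}$ in $\mathbb{R}^n\setminus\tilde{\mathcal S}$, not a biharmonic function. A Liouville theorem for $\Delta^2$ is therefore not applicable; your contradiction mechanism does not fire.

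The contradiction one actually obtains is different and comes straight from the $H^2$-decomposition \eqref{EqtBlowUp1}: since $\mathcal R_\alpha\to 0$ in $H^2\hookrightarrow L^{2^\sharp}$, and since, as just observed, the rescaled $u_\infty$ and each rescaled $B_\alpha^j$ tend to $0$ in $L^{2^\sharp}_{loc}$ at the neck scale, the rescaled $u_\alpha$ must tend to $0$ in $L^{2^\sharp}(K)$ for every compact $K$. On the other hand your bound via \eqref{EqtBlowUp4} and elliptic regularity give $C^4_{loc}$-convergence $\hat u_\alpha\to\hat u$ away from $\tilde{\mathcal S}$, and $0\notin\tilde{\mathcal S}$ (the nearest $x_{i,\alpha}$ sits at rescaled distance $1$), so $\hat u(0)=\lim\hat u_\alpha(0)\ge\delta_0>0$. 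These two facts are incompatible, and that is the contradiction. This is precisely the mechanism the paper uses: the limit is identified (via Lin's classification, though continuity at $0$ already suffices) as nontrivial, while the exhaustiveness of the bubble decomposition forces its $L^{2^\sharp}$-mass to vanish.

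Two further remarks. First, the paper avoids your case-splitting (bounded $z_\alpha$, $z_\infty\in\mathcal S_{i,r}$, neck) entirely by working at the point $x_\alpha$ where $D_\alpha^4|\varphi_\alpha|^{2^\sharp-2}$ is \emph{maximal}, with $D_\alpha(x)=\min_i\bigl(d_g(x_{i,\alpha},x)+\mu_{i,\alpha}\bigr)$ in place of $r_\alpha$; the maximality is what produces the uniform bound \eqref{Pf2Lem1.3Eqt7} needed for $C^4_{loc}$-compactness in one stroke, and the shift from $r_\alpha$ to $D_\alpha$ keeps the weight from vanishing. Second, your bubble-tree descent for $z_\infty\in\mathcal S_{i,r}$ is plausible but, as written, does not rule out that at every level the point lands in a neck of the next sub-bubble; the max-point device sidesteps this as well. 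So: keep your rescaling picture, replace the biharmonic Liouville step by the $L^{2^\sharp}$-versus-$C^0$ argument above, and consider working at the maximum to collapse the case analysis.
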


\begin{proof}[Proof of Lemma \ref{Lem3}] Let $D_\alpha: M \to \mathbb{R}$ be given by
\begin{equation}\label{DefDAlpha}
D_\alpha(x) = \min_{i=1,\dots,k}\left(d_g(x_{i,\alpha},x) + \mu_{i,\alpha}\right)\hskip.1cm .
\end{equation}
We prove sligthly more than \eqref{PtseEstLem3}, namely that
\begin{equation}\label{PtseEstLem3Bis}
D_\alpha^4\left\vert u_\alpha - u_\infty - \sum_{i=1}^kB_\alpha^i\right\vert \to 0
\end{equation}
in $L^\infty(M)$ as $\alpha \to +\infty$, where $D_\alpha$ is as in \eqref{DefDAlpha}. Let $x_\alpha \in M$ be such that
\begin{equation}\label{DefxAlpha}
\begin{split}
&D_\alpha^4(x_\alpha)\left\vert u_\alpha(x_\alpha) - u_\infty(x_\alpha) - \sum_{i=1}^kB_\alpha^i(x_\alpha)\right\vert^{2^\sharp-2}\\
&= \max_{x\in M}D_\alpha^4(x)\left\vert u_\alpha(x) - u_\infty(x) - \sum_{i=1}^kB_\alpha^i(x)\right\vert^{2^\sharp-2} \hskip.1cm .
\end{split}
\end{equation}
First we claim that
\begin{equation}\label{PfLem1.3Eqt1}
\begin{split}
&\liminf_{\alpha\to +\infty}D_\alpha^4(x_\alpha)\left\vert u_\alpha(x_\alpha) - u_\infty(x_\alpha) - \sum_{i=1}^kB_\alpha^i(x_\alpha)\right\vert^{2^\sharp-2} > 0\\
&\Longrightarrow \lim_{\alpha\to +\infty}D_\alpha(x_\alpha)^4B_\alpha^i(x_\alpha)^{2^\sharp-2} = 0
\end{split}
\end{equation}
for all $i$. In order to prove \eqref{PfLem1.3Eqt1} we proceed by contradiction and assume that there exists 
$\varepsilon_0 > 0$ and $i$ such that $D_\alpha(x_\alpha)^4B_\alpha^i(x_\alpha)^{2^\sharp-2} \ge \varepsilon_0$, and thus 
such that
\begin{equation}\label{PfLem1.3Eqt2}
\eta\left(d_g(x_{i,\alpha},x_\alpha)\right)\left(\frac{D_\alpha(x_\alpha)\mu_{i,\alpha}}{\mu_{i,\alpha}^2 + \frac{d_g(x_{i,\alpha},x_\alpha)^2}{\sqrt{\lambda_n}}}\right)^4 \ge \varepsilon_0
\hskip.1cm .
\end{equation}
By \eqref{PfLem1.3Eqt2} we get that $d_g(x_{i,\alpha},x_\alpha) \to 0$ as $\alpha \to +\infty$, that there exists $\lambda \ge 0$ such that, up to a subsequence,
\begin{equation}\label{PfLem1.3Eqt3}
\frac{d_g(x_{i,\alpha},x_\alpha)}{\mu_{i,\alpha}} \to \lambda
\end{equation}
as $\alpha \to +\infty$, and that
\begin{equation}\label{PfLem1.3Eqt4}
\frac{\mu_{j,\alpha}}{\mu_{i,\alpha}} + \frac{d_g(x_{j,\alpha},x_\alpha)}{\mu_{i,\alpha}} \ge \varepsilon_0^{1/4}
\end{equation}
for all $\alpha$ and $j$. Let $y_\alpha$ be given by
$$y_\alpha = \frac{1}{\mu_{i,\alpha}}\exp_{x_{i,\alpha}}^{-1}(x_\alpha)\hskip.1cm .$$
By \eqref{PfLem1.3Eqt4}, $d(y_\alpha,\mathcal{S}_{i,r}) \ge \varepsilon$ for all $\alpha$, where $\varepsilon > 0$ is independent of $\alpha$, while by 
\eqref{PfLem1.3Eqt3} there holds that $\vert y_\alpha\vert \le C$ for all $\alpha$, where $C > 0$ is independent of $\alpha$. We have that 
$D_\alpha(x_\alpha) \le \mu_{i,\alpha}$ by \eqref{PfLem1.3Eqt3}. By Lemma \ref{Lem2} we then get that 
\begin{equation}\label{PfLem1.3Eqt5}
D_\alpha^4(x_\alpha)\left\vert u_\alpha(x_\alpha) - u_\infty(x_\alpha) - B_\alpha^i(x_\alpha)\right\vert^{2^\sharp-2} \to 0
\end{equation}
as $\alpha \to +\infty$. As in the proof of Lemma \ref{Lem2}, using the structure equation \eqref{EqtBlowUp3}, 
there also holds that $D_\alpha(x_\alpha)^4B_\alpha^j(x_\alpha)^{2^\sharp-2} \to 0$ 
as $\alpha \to +\infty$ for all $j \not= i$. Coming back to \eqref{PfLem1.3Eqt5}, the contradiction follows with the assumption in 
\eqref{PfLem1.3Eqt1}. This proves \eqref{PfLem1.3Eqt1}. 
Now we prove \eqref{PtseEstLem3Bis}. Here again we proceed by contradiction and assume 
that there exists $\varepsilon_0 > 0$ such that
\begin{equation}\label{NewContrAsspt}
D_\alpha^4(x_\alpha)\left\vert u_\alpha(x_\alpha) - u_\infty(x_\alpha) - \sum_{i=1}^kB_\alpha^i(x_\alpha)\right\vert^{2^\sharp-2} \ge \varepsilon_0
\hskip.1cm ,
\end{equation}
where $x_\alpha$ is as in \eqref{DefxAlpha}. We claim that
\begin{equation}\label{Pf2Lem1.3Eqt1}
u_\alpha(x_\alpha) \to +\infty
\end{equation}
as $\alpha \to +\infty$. By \eqref{PfLem1.3Eqt1} and \eqref{NewContrAsspt}, we get \eqref{Pf2Lem1.3Eqt1} if we prove that 
$D_\alpha(x_\alpha) \to 0$ as $\alpha \to +\infty$. Suppose on the contrary that, up to a subsequence, 
$D_\alpha(x_\alpha) \to \delta$ as $\alpha \to +\infty$ for some $\delta > 0$. By \eqref{PfLem1.3Eqt1} and \eqref{NewContrAsspt} 
there holds that
\begin{equation}\label{Pf2Lem1.3Eqt2}
\left\vert u_\alpha(x) - u_\infty(x)\right\vert^{2^\sharp-2} \le C\left\vert u_\alpha(x_\alpha)-u_\infty(x_\alpha)\right\vert^{2^\sharp-2} + o(1) 
\end{equation}
for all $x \in B_{x_\alpha}(\delta/2)$, and all $\alpha \gg 1$. Assuming that \eqref{Pf2Lem1.3Eqt1} is false, it follows from \eqref{Pf2Lem1.3Eqt2} that the 
$u_\alpha$'s are uniformly bounded in a neighborhood of the $x_\alpha$'s. By elliptic theory we then get that $u_\alpha \to u_\infty$ in 
$L^\infty(\Omega)$, where $\Omega$ is a neighborhood of the limit of the $x_\alpha$'s. Hence $u_\alpha(x_\alpha)-u_\infty(x_\alpha) \to 0$ and 
we get a contradiction with \eqref{PfLem1.3Eqt1} and \eqref{NewContrAsspt}. This proves \eqref{Pf2Lem1.3Eqt1}. 
Now we let $\mu_\alpha$ be given by $\mu_\alpha^{2-\frac{n}{2}} = u_\alpha(x_\alpha)$
and define $\tilde u_\alpha$ by $\tilde u_\alpha = R_{x_\alpha}^{\mu_\alpha}u_\alpha$. Then
\begin{equation}\label{Pf2Lem1.3Eqt3}
\Delta_{g_\alpha}^2\tilde u_\alpha + b_\alpha\mu_\alpha^2\Delta_{g_\alpha}\tilde u_\alpha 
+ c_\alpha\mu_\alpha^4\tilde u_\alpha = \tilde u_\alpha^{2^\sharp-1}\hskip.1cm ,
\end{equation}
where $g_\alpha(x) = \bigl(\exp_{x_\alpha}^\star g\bigr)(\mu_\alpha x)$. By \eqref{Pf2Lem1.3Eqt1}, $\mu_\alpha \to 0$ as $\alpha \to +\infty$. 
It follows that $g_\alpha \to \xi$ in $C^4_{loc}(\mathbb{R}^n)$ as $\alpha \to +\infty$. Also there holds that $\tilde u_\alpha(0) = 1$ and that 
the $\tilde u_\alpha$'s are bounded in $\dot H^2$. Up to a subsequence, $\tilde u_\alpha \rightharpoonup \tilde u_\infty$ in 
$H^2_{loc}$ and 
\begin{equation}\label{Pf2Lem1.3Eqt4}
\Delta^2\tilde u_\infty = \tilde u_\infty^{2^\sharp-1}
\hskip.1cm ,
\end{equation}
where $\Delta = -\hbox{div}_\xi\nabla$ is the Euclidean Laplacian. Let $\tilde{\mathcal{S}}$ be given by
$$\tilde{\mathcal{S}} = \left\{\lim_{\alpha \to +\infty}\frac{1}{\mu_\alpha}\exp_{x_\alpha}^{-1}(x_{i,\alpha})~,~i \in I\right\}
\hskip.1cm ,$$
where $I$ consists of the indices $i$ which are such that $d_g(x_{i,\alpha},x_\alpha) = O(\mu_\alpha)$ 
and $\mu_{i,\alpha} = o(\mu_\alpha)$. In what follows we let $K \subset\subset \mathbb{R}^n\backslash\tilde{\mathcal{S}}$ be compact, 
$x \in K$. By \eqref{DefxAlpha}, \eqref{PfLem1.3Eqt1} and \eqref{NewContrAsspt}, we have that
\begin{equation}\label{Pf2Lem1.3Eqt5}
\begin{split}
&\left\vert \tilde u_\alpha(x) - \mu_\alpha^{\frac{n-4}{2}}u_\infty(y_\alpha) - 
\mu_\alpha^{\frac{n-4}{2}}\sum_{i=1}^kB_\alpha^i(y_\alpha)\right\vert^{2^\sharp-2}\\
&\le \left(\frac{D_\alpha(x_\alpha)}{D_\alpha(y_\alpha)}\right)^4\bigl(1+o(1)\bigr) + o(1)
\hskip.1cm ,
\end{split}
\end{equation}
where $y_\alpha = \exp_{x_\alpha}(\mu_\alpha x)$. It can be checked that
\begin{equation}\label{Pf2Lem1.3Eqt6}
\mu_\alpha^{\frac{n-4}{2}}B_\alpha^i(y_\alpha) \to 0
\end{equation}
for all $i$, as $\alpha \to +\infty$. By \eqref{Pf2Lem1.3Eqt1}, \eqref{Pf2Lem1.3Eqt5} and \eqref{Pf2Lem1.3Eqt6} we then get that 
\begin{equation}\label{Pf2Lem1.3Eqt7}
\tilde u_\alpha(x)^{2^\sharp-2} \le \left(\frac{D_\alpha(x_\alpha)}{D_\alpha(y_\alpha)}\right)^4\bigl(1+o(1)\bigr) + o(1)
\hskip.1cm .
\end{equation}
Since $x \in K$, and $K\subset\subset\mathbb{R}^n\backslash\tilde{\mathcal{S}}$, there holds that 
$D_\alpha(x_\alpha) = O\left(D_\alpha(y_\alpha)\right)$. Hence, by \eqref{Pf2Lem1.3Eqt7}, for any 
$K\subset\subset\mathbb{R}^n\backslash\tilde{\mathcal{S}}$, there exists $C > 0$ such that $\vert\tilde u_\alpha\vert \le C$ in $K$. 
By elliptic theory and  \eqref{Pf2Lem1.3Eqt3} we then get that
\begin{equation}\label{Pf2Lem1.3Eqt8}
\tilde u_\alpha \to \tilde u_\infty
\end{equation}
in $C^4_{loc}(\mathbb{R}^n\backslash\tilde{\mathcal{S}})$ as $\alpha \to +\infty$. It holds that $0 \not\in \tilde{\mathcal{S}}$ since, 
if not the case, we can write that $D_\alpha(x_\alpha) = o(\mu_\alpha)$ and we get a contradiction with \eqref{NewContrAsspt}. 
As a consequence, since $\tilde u_\alpha(0) = 1$, it follows that $\tilde u_\infty(0) = 1$ and thus that $\tilde u_\infty \not\equiv 0$. 
By \eqref{Pf2Lem1.3Eqt4} and Lin's classification \cite{Lin}  we then get that
\begin{equation}\label{Pf2Lem1.3Eqt9}
\tilde u_\infty(x) = \left(\frac{\lambda}{\lambda^2+\frac{\vert x-x_0\vert^2}{\sqrt{\lambda_n}}}\right)^{\frac{n-4}{2}}
\end{equation}
for some $\lambda > 0$ and $x_0 \in \mathbb{R}^n$ such that $\sqrt{\lambda_n}\lambda(\lambda-1) + \vert x_0\vert^2 = 0$. 
Let $K \subset\subset \mathbb{R}^n\backslash\tilde{\mathcal{S}}$, $K\not= \emptyset$. By \eqref{EqtBlowUp1} and 
\eqref{Pf2Lem1.3Eqt6} there holds that $\tilde u_\alpha \to 0$ in 
$L^{2^\sharp}(K)$. By \eqref{Pf2Lem1.3Eqt8} we should get that $\int_K\tilde u_\infty^{2^\sharp}dx = 0$, a contradiction with 
\eqref{Pf2Lem1.3Eqt9}. This ends the proof of Lemma \ref{Lem3}. 
\end{proof}

As already mentioned, it follows from \eqref{PtseEstLem3} that there exists $C > 0$ such that
\begin{equation}\label{WeakEstOrder0}
r_\alpha^{(n-4)/2}u_\alpha \le C\hskip.1cm .
\end{equation}
Derivative companions to this estimate are as follows.

\begin{lem}\label{Lem3bis} Let $(M,g)$ be a smooth compact Riemannian manifold of dimension $n \ge 5$, and $b, c > 0$ be positive real 
numbers such that $c - \frac{b^2}{4} < 0$. Let also $(b_\alpha)_\alpha$ and $(c_\alpha)_\alpha$ be converging sequences 
of real numbers with limits $b$ and $c$ as $\alpha \to +\infty$, and $(u_\alpha)_\alpha$ be a bounded sequence in $H^2$ of 
positive nontrivial solutions of \eqref{PertCritEqt} satisfying \eqref{BlowUpAssumpt}. There exists $C > 0$ such that, up to a subsequence,
\begin{equation}\label{PtseEstLem3bis}
r_\alpha^{\frac{n-4}{2}+k}\vert\nabla^ku_\alpha\vert \le C
\end{equation}
in $M$ for all $\alpha$, and all $k = 1,2,3$, where $r_\alpha$ is as in \eqref{DefrAlpha}.
\end{lem}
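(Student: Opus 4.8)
The plan is to bootstrap from the zeroth-order estimate \eqref{WeakEstOrder0}, which says $r_\alpha^{(n-4)/2}u_\alpha \le C$, to its derivative companions \eqref{PtseEstLem3bis} by a rescaling argument. First I would fix $k \in \{1,2,3\}$ and a point $x_\alpha \in M$ realizing the maximum of $r_\alpha(x)^{\frac{n-4}{2}+k}|\nabla^k u_\alpha(x)|$ over $M$; the goal is to show this maximum stays bounded. Set $\rho_\alpha = r_\alpha(x_\alpha)$ and rescale: let $v_\alpha(y) = \rho_\alpha^{\frac{n-4}{2}} u_\alpha(\exp_{x_\alpha}(\rho_\alpha y))$ for $y$ in a fixed-size Euclidean ball, say $|y| \le 1/2$ (noting that $r_\alpha$ is $1$-Lipschitz, so for $|y|$ small the point $\exp_{x_\alpha}(\rho_\alpha y)$ has $r_\alpha$-value comparable to $\rho_\alpha$). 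In these rescaled coordinates $v_\alpha$ solves
\[
\Delta_{g_\alpha}^2 v_\alpha + b_\alpha \rho_\alpha^2 \Delta_{g_\alpha} v_\alpha + c_\alpha \rho_\alpha^4 v_\alpha = v_\alpha^{2^\sharp-1},
\]
where $g_\alpha(y) = (\exp_{x_\alpha}^\star g)(\rho_\alpha y) \to \xi$ in $C^4_{loc}$, and by \eqref{WeakEstOrder0} the sequence $v_\alpha$ is uniformly bounded on $\{|y| \le 1/2\}$. Since $\rho_\alpha \le \mathrm{diam}(M)$ is bounded and $v_\alpha \le C$, the right-hand side $v_\alpha^{2^\sharp-1}$ and the lower-order coefficients are uniformly bounded, so standard interior elliptic estimates (Gilbarg–Trudinger \cite{GilTru}, as in the proofs of Lemmas \ref{Lem1} and \ref{Lem2}) give a uniform $C^{4,\theta}$ bound for $v_\alpha$ on $\{|y| \le 1/4\}$. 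In particular $|\nabla^k v_\alpha(0)| \le C$ for $k=1,2,3$.

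The final step is to unwind the scaling. The $k$-th covariant derivative transforms as $\nabla^k v_\alpha(0) = \rho_\alpha^{\frac{n-4}{2}+k} (\nabla^k u_\alpha)(x_\alpha)$ up to lower-order curvature correction terms from the change of metric, which are harmless because $g_\alpha \to \xi$ in $C^4$ and all relevant quantities ($u_\alpha$ and its lower derivatives at $x_\alpha$, each appropriately scaled) are already controlled by \eqref{WeakEstOrder0} and the inductive bounds on derivatives of lower order. Hence $\rho_\alpha^{\frac{n-4}{2}+k}|\nabla^k u_\alpha(x_\alpha)| \le C$, which is exactly the statement that the maximum is bounded, proving \eqref{PtseEstLem3bis}. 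One does the three cases $k=1,2,3$ either simultaneously (the $C^{4,\theta}$ bound gives all of them at once) or successively.

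The main obstacle, though a mild one, is making the comparison between $r_\alpha$ at the center $x_\alpha$ and at nearby rescaled points precise enough to close the maximum-principle-style argument: one needs that for $y$ in a fixed small ball, $r_\alpha(\exp_{x_\alpha}(\rho_\alpha y))$ is comparable to $\rho_\alpha = r_\alpha(x_\alpha)$ \emph{uniformly in $\alpha$}, so that the supposed bound on the rescaled derivative genuinely dominates the original quantity. This follows from the $1$-Lipschitz property of $r_\alpha$ (so $r_\alpha(\exp_{x_\alpha}(\rho_\alpha y)) \ge \rho_\alpha - \rho_\alpha|y| \ge \tfrac12 \rho_\alpha$ for $|y| \le \tfrac12$), but it must be invoked carefully; combined with the fact that $v_\alpha$ is bounded on a ball of fixed size independent of $\alpha$, the elliptic estimate then has a uniform domain and the argument goes through. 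The handling of the change-of-variables curvature terms for covariant derivatives is routine and can be suppressed, since they only involve lower-order derivatives already estimated by induction together with the uniformly bounded geometry of $g_\alpha$.
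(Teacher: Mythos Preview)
Your argument is correct and is a genuinely different route from the paper's. The paper proves \eqref{PtseEstLem3bis} by Green's representation: writing
\[
\nabla^k u_\alpha(x)=\int_M \nabla_x^k G_\alpha(x,y)\,u_\alpha(y)^{2^\sharp-1}\,dv_g(y),
\]
using the pointwise bound $|\nabla_x^k G_\alpha(x,y)|\le C\,d_g(x,y)^{4-n-k}$ from \cite{GruRob}, inserting $u_\alpha^{2^\sharp-1}\le C\,r_\alpha^{-(n+4)/2}$ from \eqref{WeakEstOrder0}, and concluding via Giraud's lemma. Your approach instead localises at a maximum point, rescales by $\rho_\alpha=r_\alpha(x_\alpha)$, and appeals to interior Schauder/$W^{4,p}$ estimates for the rescaled equation, exactly as in the proofs of Lemmas~\ref{Lem1} and~\ref{Lem2}.

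Each method has its advantages. The Green's function argument is short, avoids any case analysis, and delivers the estimate at every point simultaneously; on the other hand it presupposes the derivative bounds on $G_\alpha$ and the convolution inequality. Your rescaling argument is more elementary in its inputs---only interior elliptic regularity and the $1$-Lipschitz property of $r_\alpha$---and fits naturally with the rescaling machinery already in play in Section~\ref{PtEst}. One small point you should make explicit: for the exponential map $\exp_{x_\alpha}(\rho_\alpha y)$ to be defined on a ball of fixed Euclidean radius you need $\rho_\alpha$ below a fixed multiple of the injectivity radius; since $\rho_\alpha\le\mathrm{diam}(M)$, it suffices to take the working radius to be $c_0=\min\bigl(\tfrac12,\,i_g/(2\,\mathrm{diam}(M))\bigr)$ rather than $\tfrac12$, after which your Lipschitz comparison $r_\alpha(\exp_{x_\alpha}(\rho_\alpha y))\ge(1-c_0)\rho_\alpha$ and the elliptic bootstrap go through unchanged.
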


\begin{proof}[Proof of Lemma \ref{Lem3bis}] Lemma \ref{Lem3bis} follows from Green's representation formula 
and \eqref{PtseEstLem3}. Let $G_\alpha$ be the Green's function of $\Delta_g^2 + b_\alpha\Delta_g + c_\alpha$. By Green's 
representation formula,
\begin{equation}\label{EqtAddedEstDer1}
u_\alpha(x) = \int_MG_\alpha(x,y)u_\alpha(y)^{2^\sharp-1}dv_g(y)
\end{equation}
for all $x \in M$. Then, by \eqref{EqtAddedEstDer1},
\begin{equation}\label{EqtAddedEstDer2}
\nabla^ku_\alpha(x) = \int_M\nabla^k_xG_\alpha(x,y)u_\alpha(y)^{2^\sharp-1}dv_g(y)
\end{equation}
for all $x \in M$. There holds, see Grunau and Robert \cite{GruRob}, that
\begin{equation}\label{Green'sEstimate1st}
\left\vert\nabla_y^kG_\alpha(x,y)\right\vert \le Cd_g(x,y)^{4-n-k}
\end{equation}
for all $\alpha$, all $x, y \in M$ with $x \not= y$, and all $k \in \left\{0,1,2,3\right\}$. By \eqref{WeakEstOrder0}, \eqref{EqtAddedEstDer2}, 
\eqref{Green'sEstimate1st} and Giraud's lemma, we then get that 
there exists $C > 0$ such that
\begin{eqnarray*} \left\vert\nabla^ku_\alpha(x)\right\vert 
&\le& \int_M\vert\nabla_x^kG_\alpha(x,y)\vert u_\alpha(y)^{2^\sharp-1}dv_g(y)\\
&\le& C\sum_{i=1}^N\int_Md_g(x,y)^{4-n-k}d_g(x_{i,\alpha},y)^{-(n+4)/2}dv_g(y)\\
&\le&Cr_\alpha(x)^{-\frac{n-4}{2}-k}\hskip.1cm .
\end{eqnarray*}
This proves \eqref{PtseEstLem3bis}.
\end{proof}

Estimates such as in \eqref{EqtBlowUp4} and Lemma \ref{Lem3bis} are scale invariant estimates. When 
transposed to the Euclidean space, in the simple case of a single isolated 
blow-up point, we would get, for instance when $k = 0$ and $k = 2$, that 
$\vert x\vert^{(n-4)/2}\vert u(x)\vert \le C$ and $\vert x\vert^{n/2}\vert\Delta u(x)\vert \le C$. 
These two estimates are  
invariant with respect to the scaling $\lambda^{(n-4)/2}u(\lambda x)$ which leaves 
invariant both the equation $\Delta^2u = u^{2^\sharp-1}$ and the $\dot H^2$-norm.  
Scale invariant estimates, together with the Sobolev description \eqref{EqtBlowUp1}, provide valuable 
informations on the $u_\alpha$'s. However they are still not strong enough to conclude 
to the theorem. Sharper estimates such as the ones in Proposition \ref{SharpPtEst} are required. From now on we define the $v_\alpha$'s by 
\begin{equation}\label{DefvAlpha}
v_\alpha = \Delta_gu_\alpha + \frac{b_\alpha}{2}u_\alpha
\end{equation}
for all $\alpha$. By \eqref{PertCritEqt} there holds that
\begin{equation}\label{EqtvAlpha}
\Delta_gv_\alpha + \frac{b_\alpha}{2}v_\alpha = \tilde c_\alpha u_\alpha + u_\alpha^{2^\sharp-1}\hskip.1cm ,
\end{equation}
where $\tilde c_\alpha = \frac{b_\alpha^2}{4}-c_\alpha$. 
In particular, when $c_\alpha \le \frac{b_\alpha^2}{4}$, we get by the maximum principle that 
either $v_\alpha > 0$ in $M$ or $v_\alpha \equiv 0$. When we also assume that $b_\alpha > 0$, this implies 
that $v_\alpha > 0$ when $u_\alpha$ is nontrivial. 
The following lemma is a key point toward the proof of Proposition \ref{SharpPtEst}. 

\begin{lem}\label{Lem4} Let $(M,g)$ be a smooth compact Riemannian manifold of dimension $n \ge 5$. 
Let also $(b_\alpha)_\alpha$ and $(c_\alpha)_\alpha$ be converging sequences 
of positive real numbers satisfying that $c_\alpha - \frac{b_\alpha^2}{4} \le 0$ 
for all $\alpha$, and $(u_\alpha)_\alpha$ be a bounded sequence in $H^2$ of 
positive nontrivial solutions of \eqref{PertCritEqt} satisfying \eqref{BlowUpAssumpt}. Let 
$u_\infty$ be such that, up to a subsequence, $u_\alpha \to u_\infty$ a.e. in $M$. There exists $C_1 > 0$ such that 
$v_\alpha \ge C_1u_\alpha^{2^\sharp/2}$ in $M$ for all $\alpha$, where the 
$v_\alpha$'s are as in \eqref{DefvAlpha}. Assuming that either $u_\infty \not\equiv 0$ or $c - \frac{b^2}{4} < 0$, where 
$b$ and $c$ are the limits of the sequences $(b_\alpha)_\alpha$ and $(c_\alpha)_\alpha$, 
there also exists  $C_2 > 0$ such that $v_\alpha \ge C_2u_\alpha$ in $M$ for all $\alpha$.
\end{lem}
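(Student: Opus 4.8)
The plan is to exploit the two equations satisfied by $v_\alpha$, namely the defining relation \eqref{DefvAlpha} and the second-order equation \eqref{EqtvAlpha}, and to combine a maximum-principle/comparison argument with the sharp pointwise controls obtained in the preceding lemmas. First I would establish the lower bound $v_\alpha \ge C_1 u_\alpha^{2^\sharp/2}$. Since $b_\alpha > 0$ and $c_\alpha - \frac{b_\alpha^2}{4} \le 0$, the operator $\Delta_g + \frac{b_\alpha}{2}$ is coercive with a positive Green's function $H_\alpha$, uniformly in $\alpha$ (the coefficient $\frac{b_\alpha}{2}$ is bounded away from $0$ and $\infty$). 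Writing $v_\alpha$ via Green's representation for \eqref{EqtvAlpha},
\[
v_\alpha(x) = \int_M H_\alpha(x,y)\left(\tilde c_\alpha u_\alpha(y) + u_\alpha(y)^{2^\sharp-1}\right)\,dv_g(y)\,,
\]
and since $\tilde c_\alpha \ge 0$ and $u_\alpha > 0$, we get $v_\alpha(x) \ge \int_M H_\alpha(x,y)u_\alpha(y)^{2^\sharp-1}\,dv_g(y)$. The claim $v_\alpha \ge C_1 u_\alpha^{2^\sharp/2}$ is then obtained by a local analysis near a concentration point: rescaling by $\mu_\alpha$ as in Lemma \ref{Lem2}, the rescaled $v_\alpha$ converges (away from $\mathcal{S}_{i,r}$) to $\Delta B$, which is a positive multiple of $B^{2^\sharp-1}$, i.e.\ comparable to $B^{(n)/(n-4)} = (B^{2^\sharp/2})\cdot B^{(4-n)/(2(n-4))}$... more straightforwardly, since $\Delta^2 B = B^{2^\sharp-1}$ and $B$ is the ground state, one has $\Delta B + \text{(lower order)} \sim B^{2^\sharp/2}$ pointwise on bounded sets, and away from the concentration scales $u_\alpha^{2^\sharp/2}$ is controlled by \eqref{WeakEstOrder0}. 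A clean way to package this uniformly is to use the scale-invariance: both $v_\alpha$ and $u_\alpha^{2^\sharp/2}$ scale the same way under $u \mapsto \mu^{(n-4)/2}u(\mu\,\cdot)$, so it suffices to prove the comparison after rescaling at every scale, which is handled by the $C^4_{loc}$-convergence in Lemma \ref{Lem2} together with the decomposition \eqref{EqtBlowUp1} and the remainder estimate \eqref{EqtBlowUp4}; on the region where $u_\alpha$ stays bounded the inequality reduces to the strict positivity of $v_\alpha$ (maximum principle) against a bounded right-hand side.

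For the second bound $v_\alpha \ge C_2 u_\alpha$, the dichotomy $u_\infty \not\equiv 0$ versus $c - \frac{b^2}{4} < 0$ is what makes the claim go through, and this is where the main work lies. If $u_\infty \not\equiv 0$, then by the maximum principle $u_\infty > 0$ everywhere, and by the $C^0_{loc}(M\setminus\mathcal{S})$-convergence $u_\alpha \to u_\infty$ together with $v_\alpha \to v_\infty := \Delta_g u_\infty + \frac{b}{2}u_\infty > 0$ (again by the maximum principle applied to \eqref{EqtvAlpha} with $\tilde c u_\infty + u_\infty^{2^\sharp-1} > 0$), one gets $v_\alpha \ge c_0 > 0$ on all of $M$ for $\alpha$ large, and near $\mathcal{S}$ the already-established bound $v_\alpha \ge C_1 u_\alpha^{2^\sharp/2}$ dominates $u_\alpha$ since $u_\alpha \to +\infty$ there and $2^\sharp/2 > 1$. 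If instead $c - \frac{b^2}{4} < 0$ but $u_\infty \equiv 0$, then $\tilde c_\alpha = \frac{b_\alpha^2}{4} - c_\alpha \to \frac{b^2}{4} - c > 0$, so $\tilde c_\alpha \ge \tilde c_0 > 0$ for $\alpha$ large; then from the Green's representation above,
\[
v_\alpha(x) \ge \tilde c_0 \int_M H_\alpha(x,y)u_\alpha(y)\,dv_g(y) \ge \tilde c_0\, c\! \int_M u_\alpha(y)\,dv_g(y) \cdot \inf_{M\times M} H_\alpha \,,
\]
but one must do better than a constant lower bound: I would instead observe that $v_\alpha - \tilde c_0' u_\alpha$ satisfies $\left(\Delta_g + \frac{b_\alpha}{2}\right)(v_\alpha - \tilde c_0' u_\alpha) = (\tilde c_\alpha - \tilde c_0')u_\alpha + u_\alpha^{2^\sharp-1} - \tilde c_0' v_\alpha$, and compare. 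Actually the cleanest route: set $w_\alpha = v_\alpha - \varepsilon u_\alpha$ for small $\varepsilon > 0$; using $v_\alpha = \Delta_g u_\alpha + \frac{b_\alpha}{2}u_\alpha$ and \eqref{EqtvAlpha} one computes a second-order elliptic inequality for $w_\alpha$ with nonnegative zeroth-order coefficient (here $c_\alpha < \frac{b_\alpha^2}{4}$ strictly, in the limit, is exactly what gives the needed sign/coercivity slack), whose right-hand side is $\ge 0$ once $\varepsilon$ is small relative to $\tilde c_0$; the maximum principle then yields $w_\alpha \ge 0$.

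The step I expect to be the genuine obstacle is making the first inequality $v_\alpha \ge C_1 u_\alpha^{2^\sharp/2}$ \emph{uniform across all blow-up scales simultaneously} — i.e.\ controlling the transition regions between distinct bubbles and between the bubble region and the region where $u_\alpha$ is of order $u_\infty$. The decomposition \eqref{EqtBlowUp1} with the structure equation \eqref{EqtBlowUp3} and the trace estimate \eqref{EqtBlowUp4} are exactly the tools for this: one argues by contradiction, takes a sequence $x_\alpha$ realizing the infimum of $v_\alpha / u_\alpha^{2^\sharp/2}$, rescales at the scale that keeps $u_\alpha(x_\alpha)$ of unit size (as in the proof of Lemma \ref{Lem3}, producing a limiting profile via Lin's classification), and derives $v_\alpha(x_\alpha)/u_\alpha(x_\alpha)^{2^\sharp/2} \to $ a positive constant determined by the ground state $B$, a contradiction. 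The second-order comparison for $v_\alpha \ge C_2 u_\alpha$ is then comparatively routine, the only care being the sign bookkeeping that distinguishes the two hypotheses $u_\infty \not\equiv 0$ and $c - \frac{b^2}{4} < 0$.
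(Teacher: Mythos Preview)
Your approach to the first inequality $v_\alpha \ge C_1 u_\alpha^{2^\sharp/2}$ has a genuine gap, and it is far more elaborate than what is actually needed. You invoke Green's representations, the bubble decomposition, rescaling, and Lin's classification; the paper uses none of that. Its argument is a one-line maximum-principle trick: if $u,v>0$ are $C^2$ and $x_0$ maximises $v/u$, then $\frac{\Delta_g v(x_0)}{v(x_0)} \ge \frac{\Delta_g u(x_0)}{u(x_0)}$. Taking $x_\alpha$ to maximise $u_\alpha^{2^\sharp/2}/v_\alpha$, the chain rule gives $\Delta_g(u_\alpha^{2^\sharp/2}) \le \tfrac{2^\sharp}{2}u_\alpha^{2^\sharp/2-1}\Delta_g u_\alpha$ (the gradient term has the favourable sign), and plugging in \eqref{EqtvAlpha} yields by pure algebra $v_\alpha(x_\alpha)^2 \ge \tfrac{2}{2^\sharp}u_\alpha(x_\alpha)^{2^\sharp}$, uniform in $\alpha$, with no blow-up analysis whatsoever. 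Your contradiction-and-rescale plan, by contrast, does not control what happens when the minimising point $x_\alpha$ lies in a neck between bubbles or at a scale not matching any $\mu_{i,\alpha}$: the rescaled limit need not be the ground state $B$ (nor even nontrivial), and you have not shown the limiting ratio is positive. This is exactly the ``transition region'' obstacle you yourself flag, and your proposal does not overcome it; the paper's ratio trick sidesteps it entirely.

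For the second inequality your approach is closer in spirit, and your $w_\alpha=v_\alpha-\varepsilon u_\alpha$ computation in the case $c-\tfrac{b^2}{4}<0$ is correct and essentially the paper's argument in different clothing. The paper again looks at the maximiser $x_\alpha$ of $u_\alpha/v_\alpha$ and arrives at $\frac{v_\alpha(x_\alpha)^2}{u_\alpha(x_\alpha)^2} \ge \tilde c_\alpha + u_\alpha(x_\alpha)^{2^\sharp-2}$; one then uses either $\tilde c_\alpha\ge\delta>0$, or, when $u_\infty\not\equiv 0$, a Green's-function lower bound $u_\alpha(x_\alpha)\ge\int_{M\setminus\cup B_{x_i}(\delta')}G_\alpha(x_\alpha,\cdot)u_\alpha^{2^\sharp-1}dv_g \to \text{(positive)}$. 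Your patching argument in the $u_\infty\not\equiv 0$ case (use $v_\alpha\ge C_1 u_\alpha^{2^\sharp/2}$ where $u_\alpha$ is large, and $v_\alpha\to v_\infty>0$ elsewhere) implicitly needs $u_\alpha$ bounded below on all of $M$ so that $u_\alpha^{2^\sharp/2-1}$ is bounded below; this is true, by precisely the Green's-function argument just mentioned, but you should say so rather than leave it to the reader.
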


\begin{proof}[Proof of Lemma \ref{Lem4}] We use twice the basic remark that if $\Omega$ is an open subset of $M$, $u, v$ 
are $C^2$-positive functions in $\Omega$, 
and $x_0 \in \Omega$ is a point where $\frac{v}{u}$ 
achieves its supremum in $\Omega$, then
\begin{equation}\label{AddedLem}
\frac{\Delta_gv(x_0)}{v(x_0)} \ge \frac{\Delta_gu(x_0)}{u(x_0)}\hskip.1cm .
\end{equation}
Indeed,
$\nabla\left(\frac{v}{u}\right) = \frac{u\nabla v - v\nabla u}{u^2}$
so that $u(x_0)\nabla v(x_0) = v(x_0)\nabla u(x_0)$. Then,
$$\Delta_g\left(\frac{v}{u}\right)(x_0) = \frac{u(x_0)\Delta_gv(x_0) - v(x_0)\Delta_gu(x_0)}{u^2(x_0)}$$
and we get \eqref{AddedLem} by writing that $\Delta_g\left(\frac{v}{u}\right)(x_0) \ge 0$. In what follows we 
let $x_\alpha \in M$ be such that
\begin{equation}\label{Lem4Eqt1}
\frac{u_\alpha(x_\alpha)^{2^\sharp/2}}{v_\alpha(x_\alpha)} = 
\max_{x \in M}\frac{u_\alpha^{2^\sharp/2}}{v_\alpha}\hskip.1cm .
\end{equation}
Then, by \eqref{AddedLem}, 
\begin{equation}\label{Lem4Eqt2}
\frac{\Delta_gu_\alpha(x_\alpha)^{2^\sharp/2}}{u_\alpha(x_\alpha)^{2^\sharp/2}} \ge \frac{\Delta_gv_\alpha(x_\alpha)}{v_\alpha(x_\alpha)}
\hskip.1cm .
\end{equation}
We compute
\begin{equation}\label{Lem4Eqt3}
\Delta_gu_\alpha^{\frac{2^\sharp}{2}} = \frac{2^\sharp}{2}u_\alpha^{\frac{2^\sharp}{2}-1}\Delta_gu_\alpha 
- \frac{2^\sharp}{2}\left(\frac{2^\sharp}{2}-1\right)u_\alpha^{\frac{2^\sharp}{2}-2}\vert\nabla u_\alpha\vert^2\hskip.1cm .
\end{equation}
It follows from \eqref{Lem4Eqt2} and \eqref{Lem4Eqt3} that 
\begin{equation}\label{Lem4Eqt4}
\frac{2^\sharp}{2}\frac{\Delta_gu_\alpha(x_\alpha)}{u_\alpha(x_\alpha)} \ge \frac{\Delta_gv_\alpha(x_\alpha)}{v_\alpha(x_\alpha)}
\hskip.1cm .
\end{equation}
By \eqref{EqtvAlpha} and \eqref{Lem4Eqt4} we then get that
\begin{eqnarray*} 
\frac{2^\sharp}{2}\frac{v_\alpha(x_\alpha)}{u_\alpha(x_\alpha)} 
&=& \frac{2^\sharp}{2}\frac{\Delta_gu_\alpha(x_\alpha)}{u_\alpha(x_\alpha)} + \frac{2^\sharp}{2}\frac{b_\alpha}{2}\\
&\ge& \frac{\Delta_gv_\alpha(x_\alpha)}{v_\alpha(x_\alpha)} + \frac{2^\sharp}{2}\frac{b_\alpha}{2}\\
&=&  \frac{\Delta_gv_\alpha(x_\alpha)+\frac{b_\alpha}{2}v_\alpha(x_\alpha)}{v_\alpha(x_\alpha)} + \left(\frac{2^\sharp}{2}-1\right)\frac{b_\alpha}{2}\\
&=& \frac{u_\alpha(x_\alpha)^{2^\sharp-1}}{v_\alpha(x_\alpha)} + \tilde c_\alpha\frac{u_\alpha(x_\alpha)}{v_\alpha(x_\alpha)} + \left(\frac{2^\sharp}{2}-1\right)\frac{b_\alpha}{2}
\hskip.1cm ,
\end{eqnarray*}
where $\tilde c_\alpha = \frac{b_\alpha^2}{4}-c_\alpha$. By assumption, $b_\alpha > 0$ and $\tilde c_\alpha \ge 0$. In particular, we get that 
$$v_\alpha(x_\alpha) \ge \sqrt{\frac{2}{2^\sharp}}u_\alpha(x_\alpha)^{2^\sharp/2}$$
for all $\alpha$. The estimate $v_\alpha \ge Cu_\alpha^{2^\sharp/2}$ follows from the definition \eqref{Lem4Eqt1} of $x_\alpha$. This proves the first part of 
Lemma \ref{Lem4}. In order to get the second part we let $x_\alpha$ be such that
\begin{equation}\label{Lem4Eqt5}
\frac{u_\alpha(x_\alpha)}{v_\alpha(x_\alpha)} = \max_{x\in M} \frac{u_\alpha(x)}{v_\alpha(x)}\hskip.1cm .
\end{equation}
By \eqref{AddedLem} and \eqref{Lem4Eqt5}, 
$$\frac{\Delta_gu_\alpha(x_\alpha)}{u_\alpha(x_\alpha)} \ge \frac{\Delta_gv_\alpha(x_\alpha)}{v_\alpha(x_\alpha)}$$
and we get with \eqref{EqtvAlpha} that
\begin{eqnarray*}
\frac{v_\alpha(x_\alpha)-\frac{b_\alpha}{2}u_\alpha(x_\alpha)}{u_\alpha(x_\alpha)}
&\ge& \frac{\Delta_gv_\alpha(x_\alpha) + \frac{b_\alpha}{2}v_\alpha(x_\alpha)}{v_\alpha(x_\alpha)} - \frac{b_\alpha}{2}\\
&\ge& \frac{\tilde c_\alpha u_\alpha(x_\alpha) + u_\alpha(x_\alpha)^{2^\sharp-1}}{v_\alpha(x_\alpha)} - \frac{b_\alpha}{2}
\hskip.1cm .
\end{eqnarray*}
As a consequence, 
$$\frac{v_\alpha(x_\alpha)}{u_\alpha(x_\alpha)} \ge \tilde c_\alpha \frac{u_\alpha(x_\alpha)}{v_\alpha(x_\alpha)} 
+ u_\alpha(x_\alpha)^{2^\sharp-2}\frac{u_\alpha(x_\alpha)}{v_\alpha(x_\alpha)}$$
and we get that
\begin{equation}\label{Lem4Eqt6}
\frac{v_\alpha(x_\alpha)^2}{u_\alpha(x_\alpha)^2} 
\ge \tilde c_\alpha + u_\alpha(x_\alpha)^{2^\sharp-2}\hskip.1cm .
\end{equation}
Assuming that $c - \frac{b^2}{4} < 0$ there exists $\delta > 0$ such  that $c_\alpha \ge \delta$ for all $\alpha$. 
Similarly, let us assume that $u_\infty \not\equiv 0$. If $G_\alpha$ stands for the 
Green's function of $\Delta_g^2 + b_\alpha\Delta_g + c_\alpha$, then 
$$u_\alpha(x_\alpha)
= \int_MG_\alpha(x_\alpha,\cdot)u_\alpha^{2^\sharp-1}dv_g\\
\ge \int_{M\backslash\bigcup_iB_{x_i}(\delta^\prime)}G_\alpha(x_\alpha,\cdot)u_\alpha^{2^\sharp-1}dv_g\hskip.1cm .$$
By Lemma \ref{Lem3}, $u_\alpha \to u_\infty$ uniformly in compact subsets of $M\backslash\bigcup_{i=1}^k\{x_i\}$. 
Letting $\alpha \to +\infty$, and then $\delta^\prime \to 0$, it follows that there exists 
$\delta > 0$ such that $u_\alpha(x_\alpha) \ge \delta$ for all $\alpha$. In particular, in both cases $c - \frac{b^2}{4} < 0$ 
and $u_\infty \not\equiv 0$, we get with 
\eqref{Lem4Eqt6} that $v_\alpha(x_\alpha) \ge C u_\alpha(x_\alpha)$ 
for some $C > 0$ independent of $\alpha$. By the definition of $x_\alpha$ in \eqref{Lem4Eqt5} 
it follows that $v_\alpha \ge C u_\alpha$ in $M$ for all $\alpha$. This ends the proof of the lemma. 
\end{proof}

At that point, given $\delta > 0$, we define $\eta_\alpha(\delta)$ by
\begin{equation}\label{DefEtaAlpha}
\eta_\alpha(\delta) = \max_{M\backslash\bigcup_iB_{x_{i,\alpha}}(\delta)}v_\alpha
\hskip.1cm ,
\end{equation}
where $v_\alpha$ is as in \eqref{DefvAlpha}. Then we prove the following first set of pointwise $\varepsilon$-sharp estimates 
on the $u_\alpha$'s and $v_\alpha$'s.

\begin{lem}\label{Lem5} Let $(M,g)$ be a smooth compact Riemannian manifold of dimension $n \ge 5$, and $b, c > 0$ be positive real 
numbers such that $c - \frac{b^2}{4} < 0$. Let also $(b_\alpha)_\alpha$ and $(c_\alpha)_\alpha$ be converging sequences 
of real numbers with limits $b$ and $c$ as $\alpha \to \infty$, and $(u_\alpha)_\alpha$ be a bounded sequence in $H^2$ of 
positive nontrivial solutions of \eqref{PertCritEqt} satisfying \eqref{BlowUpAssumpt}. Let $0 < \varepsilon < \varepsilon_0$, where $\varepsilon_0 > 0$ 
is sufficiently small. There exist $R_\varepsilon > 0$, $\delta_\varepsilon > 0$, 
and $C_\varepsilon > 0$ such that
\begin{equation}\label{EpsilEstLem5}
\begin{split}
&u_\alpha \le C_\varepsilon\left(\mu_\alpha^{\frac{n-4}{2}-(n-2)\varepsilon}r_\alpha^{4-n+(n-2)\varepsilon} 
+ \eta_\alpha(\delta_\varepsilon)\right)~,\\
&v_\alpha \le C_\varepsilon\left(\mu_\alpha^{\frac{n-4}{2}-(n-2)\varepsilon}r_\alpha^{(2-n)(1-\varepsilon)} 
+ \eta_\alpha(\delta_\varepsilon)r_\alpha^{(2-n)\varepsilon}\right)
\end{split}
\end{equation}
in $M\backslash\bigcup_{i=1}^kB_{x_{i,\alpha}}(R_\varepsilon\mu_{i,\alpha})$ for all $\alpha$, where 
$\mu_{i,\alpha}$ is as in \eqref{EqtBlowUp2}, $r_\alpha$ is as 
in \eqref{DefrAlpha}, $\mu_\alpha$ is as in \eqref{DefMuAlpha},  $v_\alpha$ is as in \eqref{DefvAlpha}, and $\eta_\alpha$ is as 
in \eqref{DefEtaAlpha}.
\end{lem}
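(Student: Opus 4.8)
The plan is to prove the two estimates in \eqref{EpsilEstLem5} simultaneously by a De Giorgi–Nash–Moser type iteration on annuli, exploiting the factorization $P_g = (\Delta_g + \frac{b_\alpha}{2})^2 - \tilde c_\alpha$ and the comparison $v_\alpha \ge C u_\alpha$ from Lemma \ref{Lem4}, which reduces everything to second order. Since $v_\alpha = \Delta_g u_\alpha + \frac{b_\alpha}{2}u_\alpha$ satisfies \eqref{EqtvAlpha}, namely $\Delta_g v_\alpha + \frac{b_\alpha}{2}v_\alpha = \tilde c_\alpha u_\alpha + u_\alpha^{2^\sharp-1}$, and Lemma \ref{Lem4} gives $u_\alpha \le C v_\alpha$ and $u_\alpha^{2^\sharp-2} \le C v_\alpha^{2^\sharp-2}/u_\alpha^{2^\sharp-2}\cdots$; more precisely $v_\alpha \ge C_1 u_\alpha^{2^\sharp/2}$ means $u_\alpha^{2^\sharp-2} = (u_\alpha^{2^\sharp/2})^{\frac{2^\sharp-2}{2^\sharp/2}} \le C v_\alpha^{\frac{2(2^\sharp-2)}{2^\sharp}}$, so the right-hand side of \eqref{EqtvAlpha} is controlled by $(\tilde c_\alpha + u_\alpha^{2^\sharp-2})u_\alpha \le h_\alpha v_\alpha$ where $h_\alpha := \tilde c_\alpha \frac{u_\alpha}{v_\alpha} + u_\alpha^{2^\sharp-2}\frac{u_\alpha}{v_\alpha}$ is a nonnegative potential. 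The key quantitative input is that, by Lemma \ref{Lem2} and the weak estimate \eqref{WeakEstOrder0}, one has $\int_{B_{x_{i,\alpha}}(\delta)\setminus B_{x_{i,\alpha}}(R\mu_{i,\alpha})} h_\alpha^{n/4}\, dv_g$ as small as we like once $R$ is large and $\delta$ small — this is exactly the smallness of the scaling-critical norm $\|u_\alpha^{2^\sharp-2}\|_{L^{n/4}}$ on annuli away from the concentration scales, which in turn follows from \eqref{EqtLem2} (the bubble carries a fixed amount of $L^{2^\sharp}$-energy, so the leftover on the annulus is small) together with the structure equation \eqref{EqtBlowUp3}.

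The iteration itself I would run as follows. Fix $\varepsilon$ small and choose $R_\varepsilon, \delta_\varepsilon$ so that the critical mass of $h_\alpha$ on $\bigcup_i \bigl(B_{x_{i,\alpha}}(\delta_\varepsilon)\setminus B_{x_{i,\alpha}}(R_\varepsilon\mu_{i,\alpha})\bigr)$ is below the threshold needed for a Moser/Harnack estimate with exponent gap governed by $\varepsilon$; away from all blow-up points $v_\alpha \le \eta_\alpha(\delta_\varepsilon)$ by definition \eqref{DefEtaAlpha}. Introduce the comparison (super/sub)-solutions built from $r_\alpha^{(2-n)(1-\varepsilon)}$ (this exponent is chosen because $\Delta_\xi |x|^{(2-n)(1-\varepsilon)} \sim |x|^{(2-n)(1-\varepsilon)-2}$ and the potential $h_\alpha \lesssim |x|^{-4}$ near a bubble at scale $\mu_\alpha$, so the defect is of order $\varepsilon$, making $|x|^{(2-n)(1-\varepsilon)}$ a supersolution once $\varepsilon>0$); near the $i$-th bubble at scales between $R_\varepsilon\mu_{i,\alpha}$ and $\delta_\varepsilon$ the rescaled function $R_{x_{i,\alpha}}^{\mu_{i,\alpha}}u_\alpha$ is uniformly close to $B$ by Lemma \ref{Lem2}, which pins the value of $v_\alpha$ (hence of $u_\alpha$) on the sphere $\partial B_{x_{i,\alpha}}(R_\varepsilon\mu_{i,\alpha})$ at size $\asymp \mu_{i,\alpha}^{\frac{n-4}{2}}(R_\varepsilon\mu_{i,\alpha})^{2-n}\asymp \mu_{i,\alpha}^{-\frac{n}{2}+2}$, providing the boundary data. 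Then a maximum-principle comparison — i.e. testing $v_\alpha$ against the supersolution $C_\varepsilon(\mu_\alpha^{\frac{n-4}{2}-(n-2)\varepsilon}r_\alpha^{(2-n)(1-\varepsilon)} + \eta_\alpha(\delta_\varepsilon)r_\alpha^{(2-n)\varepsilon})$ on the good region, with the small-potential condition ensuring the operator $\Delta_g + \frac{b_\alpha}{2} - h_\alpha$ still satisfies a (weak) comparison principle on that region — yields the bound for $v_\alpha$. Plugging this into the representation $u_\alpha(x) = \int_M G_\alpha(\Delta_g+\frac{b_\alpha}{2};x,y)(\tilde c_\alpha u_\alpha + u_\alpha^{2^\sharp-1})(y)\,dv_g$, with $G_\alpha \lesssim d_g(x,y)^{2-n}$, and using Giraud's lemma as in Lemma \ref{Lem3bis}, converts the $v_\alpha$-estimate into the first line of \eqref{EpsilEstLem5} for $u_\alpha$ (the exponent jumps from $(2-n)(1-\varepsilon)$ to $4-n+(n-2)\varepsilon$ precisely because convolving $d_g^{2-n}$ against $d_g^{(2-n)(1-\varepsilon)-\text{(source decay)}}$ gains two orders).

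The main obstacle is the simultaneous, self-improving nature of the two estimates together with handling the \emph{cluster} case, where several $\mu_{i,\alpha}$ may have comparable or widely different sizes and the concentration points collapse: one cannot treat a single isolated bubble but must carry the whole family, and the sets $\mathcal{S}_{i,r}$ record which bubbles sit inside which at a given scale. The right way to organize this is a downward induction on the number of blow-up points (or on a hierarchy ordered by the $\mu_{i,\alpha}$'s): one first proves the estimate on the "outer" region using only $\eta_\alpha(\delta_\varepsilon)$ and the largest bubble, then on each annular shell one uses Lemma \ref{Lem2} at the appropriate scale $\mu_{i,\alpha}$ to get the boundary data for the next shell inward, feeding the already-established outer bound as a new $\eta$. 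The delicate point throughout is that the threshold $\varepsilon_0$ and the constants $R_\varepsilon,\delta_\varepsilon,C_\varepsilon$ must be chosen uniformly in $\alpha$; this is legitimate because the relevant smallness is quantified by the $L^{n/4}$-mass of $u_\alpha^{2^\sharp-2}$ on annuli, which by the Sobolev decomposition \eqref{EqtBlowUp1}–\eqref{EqtBlowUp4} and Lemma \ref{Lem2} tends to $0$ uniformly as $R\to\infty,\ \delta\to 0$. A secondary technical nuisance is that $h_\alpha$ also contains the lower-order term $\tilde c_\alpha\frac{u_\alpha}{v_\alpha}$, but since $\frac{u_\alpha}{v_\alpha}\le C$ by Lemma \ref{Lem4} this is a bounded potential and harmless for the comparison argument; one only needs to absorb it into $C_\varepsilon$ and possibly shrink $\delta_\varepsilon$.
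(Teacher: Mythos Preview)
Your proposal is on the right track and shares the paper's two-step architecture: first a second-order comparison argument for $v_\alpha$ via the equation \eqref{EqtvAlpha} and Lemma~\ref{Lem4}, then the Green representation of $u_\alpha$ against $v_\alpha$ (with $G_2'\lesssim d_g^{2-n}$ and Giraud's lemma) to gain two orders and obtain the $u_\alpha$-estimate. That part is essentially what the paper does.

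Where your sketch diverges from the paper's proof is in the \emph{execution} of the $v_\alpha$-step, and the paper's choices are worth knowing because they remove two complications you anticipate. First, the paper uses no Moser iteration and no $L^{n/4}$ smallness: it works \emph{pointwise}. One builds the single global barrier
\[
\psi_{\alpha,\varepsilon}(x)=\mu_\alpha^{\frac{n-4}{2}-(n-2)\varepsilon}\sum_{i}G_1'(x_{i,\alpha},x)^{1-\varepsilon}+\eta_\alpha(\delta_\varepsilon)\sum_{i}G_1'(x_{i,\alpha},x)^{\varepsilon},
\]
where $G_1'$ is the Green's function of $\Delta_g+1$, and examines the interior maximum $x_\alpha$ of the ratio $v_\alpha/\psi_{\alpha,\varepsilon}$ on $M\setminus\bigcup_iB_{x_{i,\alpha}}(R\mu_{i,\alpha})$. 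At such a point the elementary inequality $\frac{\Delta_g v_\alpha}{v_\alpha}\ge\frac{\Delta_g\psi_{\alpha,\varepsilon}}{\psi_{\alpha,\varepsilon}}$ holds; the right side is bounded below by $C_0(\varepsilon)r_\alpha(x_\alpha)^{-2}-C_1(\varepsilon)$, while \eqref{EqtvAlpha} together with Lemma~\ref{Lem4} ($v_\alpha\ge C_1u_\alpha^{2^\sharp/2}$ and $v_\alpha\ge C_2u_\alpha$) bounds the left side above by $Cr_\alpha(x_\alpha)^{-2}\bigl(r_\alpha(x_\alpha)^2u_\alpha(x_\alpha)^{(2^\sharp-2)/2}+r_\alpha(x_\alpha)^2\bigr)$. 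The pointwise estimate of Lemma~\ref{Lem3} then makes $r_\alpha^2u_\alpha^{(2^\sharp-2)/2}$ small once $R$ is large and $\delta_\varepsilon$ is small, forcing $x_\alpha$ either to $\partial\Omega_{\alpha,R}$ (where Lemma~\ref{Lem2} pins the ratio) or to $\{r_\alpha\ge\delta_\varepsilon\}$ (where $\eta_\alpha(\delta_\varepsilon)$ pins it). No integral $\varepsilon$-regularity threshold is needed.

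Second, because the barrier is a \emph{sum} of Green-function powers over all $i$, there is no bubble-tree induction: clusters and towers are handled in one shot, and the structure equation \eqref{EqtBlowUp3} is only used (inside Lemma~\ref{Lem2}) to pick $R$ with $\partial B_0(R)\cap\mathcal S_{i,r}=\emptyset$. This also sidesteps the smoothness issue with your barrier $r_\alpha^{(2-n)(1-\varepsilon)}$: since $r_\alpha=\min_i d_g(x_{i,\alpha},\cdot)$ is only Lipschitz, you cannot apply $\Delta_g$ to it directly, whereas $\sum_iG_1'(x_{i,\alpha},\cdot)^{1-\varepsilon}$ is smooth away from the poles and has the same asymptotics.

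In short: your strategy would work, but the De Giorgi--Nash--Moser framing and the downward induction on the hierarchy are both unnecessary; the paper's argument is a single global maximum-principle comparison, driven by the pointwise control from Lemma~\ref{Lem3} rather than an integral smallness condition.
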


\begin{proof}[Proof of Lemma \ref{Lem5}] The first estimate we prove is the one on the $v_\alpha$'s from which we deduce then the estimate 
on the $u_\alpha$'s by using the Green's representation of $u_\alpha$ in  terms of $v_\alpha$. We establish the estimate on the $v_\alpha$'s for 
$0 < \varepsilon < \frac{1}{2}$, and the estimate on the $u_\alpha$'s for $0 < \varepsilon < \frac{1}{n-2}\min(2,n-4)$.

\medskip (1) {\it Proof of the estimate on the $v_\alpha$'s in \eqref{EpsilEstLem5}}. We fix $0 < \varepsilon < \frac{1}{2}$. Let $G^\prime_1$ be the Green's function of $\Delta_g+1$ and 
let $\psi_{\alpha,\varepsilon}$ be given by
$$\psi_{\alpha,\varepsilon}(x) = \mu_\alpha^{\frac{n-4}{2}-(n-2)\varepsilon}\sum_iG^\prime_1(x_{i,\alpha},x)^{1-\varepsilon} + 
\eta_\alpha(\delta_\varepsilon)\sum_iG^\prime_1(x_{i,\alpha},x)^\varepsilon\hskip.1cm .$$
Given $R > 0$ we let $\Omega_{\alpha,R} = \bigcup_iB_{x_{i,\alpha}}(R\mu_{i,\alpha})$, and let $x_\alpha \in M\backslash\Omega_{\alpha,R}$ be such that
$$\max_{M\backslash\Omega_{\alpha,R}}\frac{v_\alpha}{\psi_{\alpha,\varepsilon}} = \frac{v_\alpha(x_\alpha)}{\psi_{\alpha,\varepsilon}(x_\alpha)}\hskip.1cm .$$
First we claim that for $\delta_\varepsilon \ll 1$ and $R_\varepsilon \gg 1$ suitably chosen,
\begin{equation}\label{Eqt1L5}
x_\alpha \in \partial\left(M\backslash\Omega_{\alpha,R}\right)\hskip.2cm\hbox{or}\hskip.2cm r_\alpha(x_\alpha) \ge \delta_\varepsilon\hskip.1cm .
\end{equation}
We prove \eqref{Eqt1L5} by contradiction. We assume $x_\alpha \not\in \partial\left(M\backslash\Omega_{\alpha,R}\right)$ and $r_\alpha(x_\alpha) < \delta_\varepsilon$. 
We have that
\begin{equation}\label{AddedFREqt1}
\frac{\Delta_gv_\alpha(x_\alpha)}{v_\alpha(x_\alpha)} \ge \frac{\Delta_g\psi_{\alpha,\varepsilon}(x_\alpha)}{\psi_{\alpha,\varepsilon}(x_\alpha)}
\end{equation}
and by direct computations, using standard properties of the Green's function $G^\prime_1$ such as its control by the distance to the pole, there also holds that 
since $0 < \varepsilon < \frac{1}{2}$, there exist $C_0(\varepsilon), C_1(\varepsilon) > 0$ such that
\begin{equation}\label{AddedFREqt2}
r_\alpha(x_\alpha)^2\frac{\Delta_g\psi_{\alpha,\varepsilon}(x_\alpha)}{\psi_{\alpha,\varepsilon}(x_\alpha)} \ge C_0(\varepsilon) 
- C_1(\varepsilon)r_\alpha(x_\alpha)^2\hskip.1cm .
\end{equation}
By \eqref{EqtvAlpha} and Lemma \ref{Lem4},
\begin{equation}\label{AddedFREqt3}
\begin{split}
r_\alpha(x_\alpha)^2\frac{\Delta_gv_\alpha(x_\alpha)}{v_\alpha(x_\alpha)} 
&\le r_\alpha(x_\alpha)^2 \frac{u_\alpha(x_\alpha)^{2^\sharp-1}}{v_\alpha(x_\alpha)} + r_\alpha(x_\alpha)^2\tilde c_\alpha\frac{u_\alpha(x_\alpha)}{v_\alpha(x_\alpha)}\\
&\le Cr_\alpha(x_\alpha)^2u_\alpha(x_\alpha)^{\frac{2^\sharp-2}{2}} + Cr_\alpha(x_\alpha)^2
\end{split}
\end{equation}
for all $\alpha$, where $\tilde c_\alpha = \frac{b_\alpha^2}{4}-c_\alpha$ and $C > 0$ does not depend on $\alpha$. By Lemma \ref{Lem3}, 
\begin{equation}\label{AddedFREqt4}
\begin{split}
\left(r_\alpha(x_\alpha)^2u_\alpha(x_\alpha)^{\frac{2^\sharp-2}{2}}\right)^2 
&\le Cr_\alpha(x_\alpha)^4 + C\sum_ir_\alpha(x_\alpha)^4B_\alpha^i(x_\alpha)^{2^\sharp-2} + o(1)\\
&\le C\delta_\varepsilon^4 + \varepsilon_R\hskip.1cm ,
\end{split}
\end{equation}
where $\varepsilon_R \to 0$ as $R \to +\infty$. By \eqref{AddedFREqt1}--\eqref{AddedFREqt4} we get a contradiction by choosing $\delta_\varepsilon \ll 1$ and $R \gg 1$ 
sufficiently small. This proves \eqref{Eqt1L5}. Now that we have \eqref{Eqt1L5}, up to increasing $R$, we claim that thanks to 
Lemma \ref{Lem2},
\begin{equation}\label{Eqt2L5}
\max_{M\backslash\Omega_{\alpha,R}}\frac{v_\alpha}{\psi_{\alpha,\varepsilon}} \le C_\varepsilon\hskip.1cm .
\end{equation}
Indeed, if $r_\alpha(x_\alpha) \ge \delta_\varepsilon$, then
$$\frac{v_\alpha(x_\alpha)}{\psi_{\alpha,\varepsilon}(x_\alpha)} \le \frac{v_\alpha(x_\alpha)}{\eta_\alpha(\delta_\varepsilon)}
\left(\sum_iG^\prime_1(x_{i,\alpha},x_\alpha)^\varepsilon\right)^{-1} 
\le C\hskip.1cm ,$$
while if $x_\alpha \in \partial\left(M\backslash\Omega_{\alpha,R}\right)$, we get that
\begin{eqnarray*} v_\alpha(x_\alpha)
&=& v_\alpha\left(\exp_{x_{i,\alpha}}(\mu_{i,\alpha}z_\alpha)\right)\\
&=& \mu_{i,\alpha}^{-\frac{n}{2}}\left(\Delta_{g_\alpha}(R_{x_{i,\alpha}}^{\mu_{i,\alpha}}u_\alpha) + \frac{b_\alpha\mu_{i,\alpha}^2}{2}(R_{x_{i,\alpha}}^{\mu_{i,\alpha}}u_\alpha)\right)(z_\alpha)
\end{eqnarray*}
for some $i$, where $z_\alpha$ is such that $x_\alpha = \exp_{x_{i,\alpha}}(\mu_{i,\alpha}z_\alpha)$, 
and $g_\alpha(x) = \bigl(\exp_{x_{i,\alpha}}^\star g\bigr)(\mu_{i,\alpha}x)$. Then, by 
Lemma \ref{Lem2}, and standard properties of $G^\prime_1$,
$$\frac{v_\alpha(x_\alpha)}{\psi_{\alpha,\varepsilon}(x_\alpha)}
\le C \frac{\mu_{i,\alpha}^{-\frac{n}{2}}\mu_{i,\alpha}^{(n-2)(1-\varepsilon)}}{\mu_\alpha^{\frac{n-4}{2}-(n-2)\varepsilon}}
\le C\left(\frac{\mu_{i,\alpha}}{\mu_\alpha}\right)^{\frac{n-4}{2} - (n-2)\varepsilon}\le C$$
up to choosing $R \gg 1$ such that $\partial B_0(R)\bigcap \mathcal{S}_{i,r} = \emptyset$, where $\mathcal{S}_{i,r}$ is as in \eqref{DefSsets}.
In particular, we get that \eqref{Eqt2L5} holds true. Noting that $G^\prime_1(x_{i,\alpha},x) \le Cr_\alpha(x)^{-(n-2)}$, this ends the proof of the 
estimate on the $v_\alpha$'s in \eqref{EpsilEstLem5}.

\medskip (2) {\it Proof of the estimate on the $u_\alpha$'s in \eqref{EpsilEstLem5}}. We fix $0 < \varepsilon < \frac{1}{n-2}\min(2,n-4)$. 
Let $G^\prime_2$ be the Green's function of $\Delta_g + \frac{b}{4}$. Let $(x_\alpha)_\alpha$ be an arbitrary sequence of points such that 
$x_\alpha \in M\backslash\Omega_{\alpha,R}$ for all $\alpha$, where $R > 0$ is to be chosen later on. There holds that
\begin{equation}\label{RepForEqt1}
\begin{split}
u_\alpha(x_\alpha) 
&= \int_MG^\prime_2(x_\alpha,x)\left(\bigl(\Delta_g+\frac{b}{4}\bigr)u_\alpha\right)(x)dv_g(x)\\
&\le \int_MG^\prime_2(x_\alpha,x)v_\alpha(x)dv_g(x)\\
&\le \int_{M\backslash\Omega_{\alpha,R_\varepsilon}}G^\prime_2(x_\alpha,x)v_\alpha(x)dv_g(x)\\
&\hskip.4cm + \sum_i\int_{B_{x_{i,\alpha}(R_\varepsilon\mu_{i,\alpha})}}G^\prime_2(x_\alpha,x)v_\alpha(x)dv_g(x)
\end{split}
\end{equation}
for all $\alpha$, where $R_\varepsilon$ is the radius obtained when proving 
the estimate on the $v_\alpha$'s in \eqref{EpsilEstLem5}. We have that $G^\prime_2(x_\alpha,x) \le Cd_g(x_\alpha,x)^{2-n}$. Hence, by Giraud's lemma,
\begin{equation}\label{EstVarepsEqt1}
\begin{split}
&\int_{M\backslash\Omega_{\alpha,R_\varepsilon}}G^\prime_2(x_\alpha,x)r_\alpha(x)^{(2-n)(1-\varepsilon)}dv_g(x)\\
&\le C\sum_i\int_Md_g(x_\alpha,x)^{2-n}d_g(x_{i,\alpha},x)^{2+(n-2)\varepsilon-n}dv_g(x)\\
&\le C\sum_id_g(x_{i,\alpha},x_\alpha)^{4-n+(n-2)\varepsilon}
\end{split}
\end{equation}
since $\varepsilon < \frac{n-4}{n-2}$. Now we fix $R > 0$ such that $R \ge 2R_\varepsilon$. Then $d_g(x_\alpha,x) \ge \frac{1}{2}d_g(x_{i,\alpha},x)$ 
for all $x \in B_{x_{i,\alpha}}(R_\varepsilon\mu_{i,\alpha})$, and we get that
\begin{equation}\label{EstVarepsEqt2}
\begin{split}
&\int_{B_{x_{i,\alpha}}(R_\varepsilon)}G^\prime_2(x_\alpha,x)v_\alpha(x)dv_g(x)\\
&\le Cd_g(x_{i,\alpha}x_\alpha)^{2-n}\int_{B_{x_{i,\alpha}}(R_\varepsilon)}v_\alpha dv_g\\
&\le Cd_g(x_{i,\alpha}x_\alpha)^{2-n}\hbox{Vol}_g\bigl(B_{x_{i,\alpha}}(R_\varepsilon\mu_{i,\alpha})\bigr)^{1/2}\Vert v_\alpha\Vert_{L^2(M)}\\
&\le Cd_g(x_{i,\alpha}x_\alpha)^{2-n}\mu_{i,\alpha}^{n/2}
\end{split}
\end{equation}
since the $v_\alpha$'s are bounded in $L^2$. At last, still by Giraud's lemma, 
\begin{equation}\label{EstVarepsEqt3}
\begin{split}
&\int_{M\backslash\Omega_{\alpha,R_\varepsilon}}G^\prime_2(x_\alpha,x)r_\alpha(x)^{(2-n)\varepsilon}dv_g(x)\\
&\le C\sum_i\int_Md_g(x_\alpha,x)^{2-n}d_g(x_{i,\alpha},x)^{(2-n)\varepsilon}dv_g(x)\\
&\le C
\end{split}
\end{equation}
since $0 < \varepsilon < \frac{2}{n-2}$. 
Combining \eqref{EstVarepsEqt1}--\eqref{EstVarepsEqt3} with \eqref{RepForEqt1}, thanks to the estimate on the $v_\alpha$'s in 
\eqref{EpsilEstLem5}, we get that 
\begin{equation}\label{EstVarepsEqt4}
\begin{split}
u_\alpha(x_\alpha) 
&\le C\mu_\alpha^{\frac{n-4}{2}-(n-2)\varepsilon}\sum_id_g(x_{i,\alpha},x_\alpha)^{4-n+(n-2)\varepsilon}\\
&+ C\sum_i\mu_{i,\alpha}^{n/2}d_g(x_{i,\alpha}x_\alpha)^{2-n} + C \eta_\alpha(\delta_\varepsilon)
\end{split}
\end{equation}
There holds that
\begin{eqnarray*}
&&\mu_\alpha^{\frac{n-4}{2}-(n-2)\varepsilon}d_g(x_{i,\alpha},x_\alpha)^{4-n+(n-2)\varepsilon} 
+ \mu_{i,\alpha}^{n/2}d_g(x_{i,\alpha}x_\alpha)^{2-n}\\
&&= \mu_\alpha^{\frac{n-4}{2}-(n-2)\varepsilon}d_g(x_{i,\alpha},x_\alpha)^{4-n+(n-2)\varepsilon}\\
&&\hskip.4cm\times\left(1+d_g(x_{i,\alpha},x_\alpha)^{-2-(n-2)\varepsilon}\mu_{i,\alpha}^{n/2}\mu_\alpha^{-\frac{n-4}{2}+(n-2)\varepsilon}\right)
\end{eqnarray*}
and
$$d_g(x_{i,\alpha},x_\alpha)^{-2-(n-2)\varepsilon}\mu_{i,\alpha}^{n/2}\mu_\alpha^{-\frac{n-4}{2}+(n-2)\varepsilon} 
\le C \left(\frac{\mu_{i,\alpha}}{\mu_\alpha}\right)^{\frac{n-4}{2}-(n-2)\varepsilon} \le C$$
since $d_g(x_{i,\alpha},x_\alpha) \ge R\mu_{i,\alpha}$. Coming back to \eqref{EstVarepsEqt4} we get that
the estimate on the $u_\alpha$'s in \eqref{EpsilEstLem5} holds true. This ends the proof of the lemma.
\end{proof}

Thanks to the estimates in Lemma \ref{Lem5} we can now prove Proposition \ref{SharpPtEst}.

\begin{proof}[Proof of Proposition \ref{SharpPtEst}] Consider the estimates: there exist $C > 0$, $R > 0$ and $\delta > 0$ such that, up to a subsequence,
\begin{equation}\label{EqtMnPrpToProve}
\left\vert\nabla^ju_\alpha\right\vert \le C\left(\mu_\alpha^{\frac{n-4}{2}}r_\alpha^{4-n-j} + \eta_\alpha(\delta)^{2^\sharp-1}\right)
\end{equation}
in $M\backslash\Omega_{\alpha,R}$ for all $j = 0,1,2,3$, and all $\alpha$, where $\Omega_{\alpha,R} = \bigcup_iB_{x_{i,\alpha}}(R\mu_\alpha)$. 
We prove Proposition \ref{SharpPtEst} by proving first these estimates, then by proving that we can replace $\eta_\alpha(\delta)^{2^\sharp-1}$ by $\Vert u_\infty\Vert_{L^\infty}$ 
in \eqref{EqtMnPrpToProve}, and at last by proving that the estimates hold in the whole of $M$.

\medskip (1) {\it Proof of \eqref{EqtMnPrpToProve}}. Let $G_\alpha$ be the Green's function of the fourth order 
operator $\Delta_g^2 + b_\alpha\Delta_g + c_\alpha$. By Lemma \ref{Lem5}, given $0 < \varepsilon \ll 1$, there exist 
$R_\varepsilon, C_\varepsilon, \delta_\varepsilon > 0$ such that
\begin{equation}\label{TotEstEqt1}
u_\alpha \le C_\varepsilon\left(\mu_\alpha^{\frac{n-4}{2}-(n-2)\varepsilon}r_\alpha^{4-n+(n-2)\varepsilon} 
+ \eta_\alpha(\delta_\varepsilon)\right)
\end{equation}
in $M\backslash\bigcup_{i=1}^kB_{x_{i,\alpha}}(R_\varepsilon\mu_{i,\alpha})$. Since $\mu_{i,\alpha} \le \mu_\alpha$ by the definition of $\mu_\alpha$, there 
holds that $M\backslash\Omega_{\alpha,R} \subset M\backslash\bigcup_{i=1}^kB_{x_{i,\alpha}}(R\mu_{i,\alpha})$. Let $(x_\alpha)_\alpha$ be a 
sequence in $M\backslash\Omega_{\alpha,R}$. We aim at proving that there exists $C, \delta > 0$ such that, up to a subsequence, 
\begin{equation}\label{TotEstEqt1ToProve}
\left\vert\nabla^ju_\alpha(x_\alpha)\right\vert \le C\left(\mu_\alpha^{\frac{n-4}{2}}r_\alpha(x_\alpha)^{4-n-j} + \eta_\alpha(\delta)^{2^\sharp-1}\right)
\end{equation}
for all $\alpha$. Let $R = 2R_\varepsilon+1$ and $\delta = \delta_\varepsilon$. We have that 
\begin{equation}\label{GreenFctRel}
u_\alpha(x) = \int_MG_\alpha(x,y)u_\alpha(y)^{2^\sharp-1}dv_g(y)
\end{equation}
for all $\alpha$ and $x \in M$. By combining \eqref{Green'sEstimate1st} and \eqref{GreenFctRel} we then get that 
\begin{equation}\label{FirstSplittEqt1}
\begin{split}
\left\vert\nabla^ju_\alpha(x_\alpha)\right\vert 
&\le \int_M\left\vert\nabla^j_xG_\alpha(x_\alpha,x)\right\vert u_\alpha(x)^{2^\sharp-1}dv_g(x)\\
&\le C\int_Md_g(x_\alpha,x)^{4-n-j}u_\alpha(x)^{2^\sharp-1}dv_g(x)\\
&\le \int_{M\backslash\Omega_{\alpha,R_\varepsilon}}d_g(x_\alpha,x)^{4-n-j}u_\alpha(x)^{2^\sharp-1}dv_g(x)\\ 
&\hskip.4cm + \int_{\Omega_{\alpha,R_\varepsilon}}d_g(x_\alpha,x)^{4-n-j}u_\alpha(x)^{2^\sharp-1}dv_g(x)
\end{split}
\end{equation}
Let $k_n = (2^\sharp-1)(n-2)$. By \eqref{TotEstEqt1}, 
\begin{equation}\label{FirstSplittEqt2}
\begin{split}
&\int_{M\backslash\Omega_{\alpha,R_\varepsilon}}d_g(x_\alpha,x)^{4-n-j}u_\alpha(x)^{2^\sharp-1}dv_g(x)\\
&\le C\mu_\alpha^{\frac{n+4}{2}-k_n\varepsilon}\sum_iA_\alpha^i
+ C\eta_\alpha(\delta_\varepsilon)^{2^\sharp-1}\hskip.1cm ,
\end{split}
\end{equation}
where
$$A_\alpha^i = \int_{M\backslash B_{x_{i,\alpha}}(R_\varepsilon\mu_\alpha)}d_g(x_\alpha,x)^{4-n-j}d_g(x_{i,\alpha},x)^{-(n+4)+k_n\varepsilon}dv_g(x)
\hskip.1cm .$$
Let $K_{i,\alpha} = \bigl\{x~\hbox{s.t.}~d_g(x_\alpha,x) \le \frac{1}{2}d_g(x_{i,\alpha},x_\alpha)\bigr\}$. Then
\begin{equation}\label{FirstSplittEqt3}
\begin{split}
A_\alpha^i
&\le \int_{K_{i,\alpha}\backslash B_{x_{i,\alpha}}(R_\varepsilon\mu_\alpha)}d_g(x_\alpha,x)^{4-n-j}d_g(x_{i,\alpha},x)^{-(n+4)+k_n\varepsilon}dv_g(x)\\
&\hskip.4cm + \int_{K_{i,\alpha}^c\backslash B_{x_{i,\alpha}}(R_\varepsilon\mu_\alpha)}d_g(x_\alpha,x)^{4-n-j}d_g(x_{i,\alpha},x)^{-(n+4)+k_n\varepsilon}dv_g(x)\\
&= A_{1,\alpha}^i + A_{2,\alpha}^i
\hskip.1cm .
\end{split}
\end{equation}
By the definition of $K_{i,\alpha}$, there holds that $d_g(x_\alpha,x) \le d_g(x_{i,\alpha},x)$ in $K_{i,\alpha}$. Hence, choosing $\varepsilon \ll 1$ sufficiently small 
such that $4-k_n\varepsilon > 0$, we can write that
\begin{equation*}
\begin{split}
&\mu_\alpha^{4-k_n\varepsilon}\int_{K_{i,\alpha}\backslash B_{x_{i,\alpha}}(R_\varepsilon\mu_\alpha)}d_g(x_\alpha,x)^{4-n-j}d_g(x_{i,\alpha},x)^{-(n+4)+k_n\varepsilon}dv_g(x)\\
&\le \int_{K_{i,\alpha}\backslash B_{x_{i,\alpha}}(R_\varepsilon\mu_\alpha)}d_g(x_\alpha,x)^{4-n-j-\theta}d_g(x_{i,\alpha},x)^{\theta-n}
\left(\frac{d_g(x_\alpha,x)}{d_g(x_{i,\alpha},x)}\right)^\theta dv_g(x)\hskip.1cm ,
\end{split}
\end{equation*}
where $0 < \theta \ll 1$ is chosen small, and by Giraud's lemma we get that
\begin{equation}\label{FirstSplittEqt4}
A_{1,\alpha}^i \le C\mu_\alpha^{-4+k_n\varepsilon}d_g(x_{i,\alpha},x_\alpha)^{4-n-j}
\hskip.1cm .
\end{equation}
In $K_{i,\alpha}^c$ there holds that $d_g(x_\alpha,x) \ge \frac{1}{2}d_g(x_{i,\alpha},x_\alpha)$, and we can directly write that
\begin{equation}\label{FirstSplittEqt5}
\begin{split}
A_{2,\alpha}^i
&\le Cd_g(x_{i,\alpha},x_\alpha)^{4-n-j}\int_{K_{i,\alpha}^c\backslash B_{x_{i,\alpha}}(R_\varepsilon\mu_\alpha)}d_g(x_{i,\alpha},x)^{-(n+4)+k_n\varepsilon}dv_g(x)\\
&\le C\mu_\alpha^{-4+k_n\varepsilon}d_g(x_{i,\alpha},x_\alpha)^{4-n-j}\hskip.1cm .
\end{split}
\end{equation}
At last, since $R \ge 2R_\varepsilon + 1$, we can write that $d_g(x_\alpha,x) \ge \frac{1}{2}d_g(x_{i,\alpha},x_\alpha)$ for all 
$\alpha$ and all $x \in B_{x_{i,\alpha}}(R_\varepsilon\mu_\alpha)$. Hence, by H\"older's inequality, for any $i$,
\begin{equation}\label{FirstSplittEqt6}
\begin{split}
&\int_{B_{x_{i,\alpha}}(R_\varepsilon\mu_\alpha)}d_g(x_\alpha,x)^{4-n-j}u_\alpha(x)^{2^\sharp-1}dv_g(x)\\
&\le C d_g(x_\alpha,x_{i,\alpha})^{4-n-j}\int_{B_{x_{i,\alpha}}(R_\varepsilon\mu_\alpha)}u_\alpha(x)^{2^\sharp-1}dv_g(x)\\
&\le C d_g(x_\alpha,x_{i,\alpha})^{4-n-j}\hbox{Vol}_g\left(B_{x_{i,\alpha}}(R_\varepsilon\mu_\alpha)\right)^{1-\frac{2^\sharp-1}{2^\sharp}}\Vert u_\alpha\Vert_{L^{2^\sharp}}^{2^\sharp-1}\\
&\le C \mu_\alpha^{\frac{n-4}{2}} r_\alpha(x_\alpha)^{4-n-j}\hskip.1cm .
\end{split}
\end{equation}
Combining \eqref{FirstSplittEqt1}--\eqref{FirstSplittEqt6} we 
then get that \eqref{TotEstEqt1ToProve} holds true. This proves \eqref{EqtMnPrpToProve}.

\medskip (2) {\it Proof that \eqref{EqtMnPrpToProve} holds true with $\Vert u_\infty\Vert_{L^\infty}$ instead of $\eta_\alpha(\delta)^{2^\sharp-1}$}. If $u_\infty \not\equiv 0$ there is nothing to do 
since, by Lemma \ref{Lem3bis}, $\eta_\alpha(\delta) \le C$.  
Now we prove that
\begin{equation}\label{ToBeProvedEta}
\eta_\alpha(\delta) \le C\mu_\alpha^{(n-4)/2}
\end{equation}
in case $u_\infty \equiv 0$. This is sufficient to conclude to the validity of 
\eqref{EqtMnPrpToProve} with $\Vert u_\infty\Vert_{L^\infty}$ instead of $\eta_\alpha(\delta)^{2^\sharp-1}$. We assume in what follows that $u_\infty \equiv 0$ and 
we define $\Omega_\alpha(\delta) = \bigcup_iB_{x_{i,\alpha}}(\delta)$. By \eqref{EqtMnPrpToProve},
\begin{equation}\label{EstEtaEqt1}
\max_{M\backslash\Omega_\alpha(\delta/2)}u_\alpha \le C\mu_\alpha^{\frac{n-4}{2}} + C \eta_\alpha(\delta)^{2^\sharp-1}
\hskip.1cm ,
\end{equation}
while, by \eqref{EqtvAlpha}, we can write that
\begin{equation}\label{EstEtaEqt2}
\max_{M\backslash\Omega_\alpha(\delta)}v_\alpha \le C\max_{M\backslash\Omega_\alpha(\delta/2)}u_\alpha + C\Vert v_\alpha\Vert_{L^1}
\hskip.1cm .
\end{equation}
Let $G^\prime_3$ be the Green's function of $\Delta_g + b$. There exists $\Lambda > 0$ such that 
$G^\prime_3 \ge \Lambda$ in $M$ and since $v_\alpha \le (\Delta_g + b)u_\alpha$ 
for $\alpha \gg 1$, we get with Green's representation formula that 
$$\Lambda\Vert v_\alpha\Vert_{L^1} \le \int_MG^\prime_3(x,y)v_\alpha(y)dv_g(y) \le u_\alpha(x)$$
for all $\alpha$ and all $x$. Therefore, thanks to \eqref{EstEtaEqt2},
\begin{equation}\label{EstEtaEqt3}
\eta_\alpha(\delta) \le C\max_{M\backslash\Omega_\alpha(\delta/2)}u_\alpha
\hskip.1cm .
\end{equation}
By Lemma \ref{Lem3}, there holds that
$\max_{M\backslash\Omega_\alpha(\delta/2)}u_\alpha\to 0$ as $\alpha \to +\infty$. 
In particular $\eta_\alpha(\delta) \to 0$ as $\alpha \to +\infty$, and since by \eqref{EstEtaEqt1} and 
\eqref{EstEtaEqt3},
$$\max_{M\backslash\Omega_\alpha(\delta/2)}u_\alpha \le C\mu_\alpha^{\frac{n-4}{2}} + C \eta_\alpha(\delta)^{2^\sharp-2}
\max_{M\backslash\Omega_\alpha(\delta/2)}u_\alpha\hskip.1cm ,$$
we get that 
\begin{equation}\label{EstEtaEqt4}
\max_{M\backslash\Omega_\alpha(\delta/2)}u_\alpha \le C\mu_\alpha^{\frac{n-4}{2}}\hskip.1cm .
\end{equation}
The existence of $C > 0$ such that \eqref{ToBeProvedEta} holds true follows from \eqref{EstEtaEqt3} and \eqref{EstEtaEqt4}. 

\medskip (3) {\it Proof that the estimates are global in $M$}. According to the preceding discussion, the estimates \eqref{EqtMnPrp} hold
in $M\backslash\Omega_{\alpha,R}$ for some $R > 0$. We are left with the proof that they also hold in $\Omega_{\alpha,R}$. By Lemmas 
\ref{Lem3} and \ref{Lem3bis},
$$r_\alpha^{\frac{n-4}{2}+j}\vert\nabla^ju_\alpha\vert \le C$$
in $M$ for all $j=0,1,2,3$. Noting that $r_\alpha^{-\frac{n-2}{4}-j} \le Cr_\alpha^{4-n-j}$ 
in $\Omega_{\alpha,R}$, this ends the proof 
of the proposition.
\end{proof}

\section{Proof of Theorem \ref{LimitProfileThm}}\label{ProofThm1}

We prove Theorem \ref{LimitProfileThm} by contradiction. We assume that $(M,g)$ is conformally flat of dimension $n \ge 5$. 
We let $(b_\alpha)_\alpha$ and $(c_\alpha)_\alpha$ be converging sequences 
of real numbers with limits $b$ and $c$ as $\alpha \to \infty$, $c - \frac{b^2}{4} < 0$, and $(u_\alpha)_\alpha$ be a bounded sequence in $H^2$ of 
positive nontrivial solutions of \eqref{PertCritEqt} satisfying \eqref{BlowUpAssumpt}. The Pohozaev identity for fourth order 
equations can be written as follows: for any smooth bounded domain $\Omega \subset \mathbb{R}^n$, and any 
$u \in C^4(\overline{\Omega})$,
\begin{equation}\label{PohTypeIdentity}
\begin{split}
&\int_\Omega\left(x^k\partial_ku\right)\Delta^2u dx 
+ \frac{n-4}{2} \int_\Omega u\Delta^2u dx\\
&= \frac{n-4}{2} \int_{\partial\Omega}\left(-u\frac{\partial\Delta u}{\partial\nu} + 
\frac{\partial u}{\partial\nu}\Delta u\right)d\sigma\\
&+ \int_{\partial\Omega}\left(\frac{1}{2}(x,\nu)(\Delta u)^2 - (x,\nabla u)\frac{\partial\Delta u}{\partial\nu} 
+ \frac{\partial(x,\nabla u)}{\partial\nu}\Delta u\right)d\sigma
\hskip.1cm ,
\end{split}
\end{equation}
where $\nu$ is the outward unit normal to $\partial\Omega$ and $d\sigma$ is the Euclidean 
volume element on $\partial\Omega$. A preliminary lemma we prove is concerned with the Pohozaev identity, 
applied to the $u_\alpha$'s, in balls of radii $\sqrt{\mu_\alpha}$, where $\mu_\alpha$ is as in \eqref{DefMuAlpha}. 
Without loss of generality, up to passing to a subsequence, we can suppose that $\mu_\alpha = \mu_{1,\alpha}$ 
for all $\alpha$. Then we let $x_\alpha = x_{1,\alpha}$ for all $\alpha$. We say $x_\alpha$ is the blow-up point associated with 
$\mu_\alpha$. The meaning of $\sqrt{\mu_\alpha}$ in this section is that it is precisely the distance up to which a bubble singularity like
in \eqref{EqtBlowUp2}, with $x_{i,\alpha} = x_\alpha$ and $\mu_{i,\alpha} = \mu_\alpha$, 
interact in the $L^\infty$-topology. Namely, for such a $B_\alpha$, 
$$\lim_{\alpha \to +\infty} \max_{M\backslash B_{x_\alpha}(R\sqrt{\mu_\alpha})}B_\alpha = \varepsilon_R\hskip.1cm ,$$
where $\varepsilon_R \to 0$ as $R \to +\infty$. In particular, $\max_{\partial B_{x_\alpha}(\delta_\alpha)}B_\alpha \to 0$ as 
$\alpha \to +\infty$ for any sequence $(\delta_\alpha)_\alpha$ of positive real numbers such that $\frac{\delta_\alpha}{\sqrt{\mu_\alpha}} \to +\infty$.

\begin{lem}\label{LemPrf1} Let $(M,g)$ be a smooth compact conformally flat 
Riemannian manifold of dimension $n \ge 5$, and $b, c > 0$ be positive real 
numbers such that $c - \frac{b^2}{4} < 0$. Let also $(b_\alpha)_\alpha$ and $(c_\alpha)_\alpha$ be converging sequences 
of real numbers with limits $b$ and $c$ as $\alpha \to \infty$, and $(u_\alpha)_\alpha$ be a bounded sequence in $H^2$ of 
positive nontrivial solutions of \eqref{PertCritEqt} satisfying \eqref{BlowUpAssumpt}. There exists $\delta > 0$ and $K(u_\infty) \ge 0$ 
such that $K(u_\infty) > 0$ if $u_\infty \not\equiv 0$, and
\begin{equation}\label{LemEstPohoz}
\begin{split}
&\int_{B_{x_\alpha}(\delta\sqrt{\mu_\alpha})}\bigl(A_g-b_\alpha g\bigr)(\nabla u_\alpha,\nabla u_\alpha)dv_g\\
&= o\left(\int_{B_{x_\alpha}(\delta\sqrt{\mu_\alpha})}\vert\nabla u_\alpha\vert^2dv_g\right) 
- \bigl(K(u_\infty) + o(1)\bigr)\mu_\alpha^{\frac{n-4}{2}}\hskip.1cm .
\end{split}
\end{equation}
for all $\alpha$, where $A_g$ is as in \eqref{DefAg}, $\mu_\alpha$ is as in \eqref{DefMuAlpha}, and $x_\alpha$ is the blow-up 
point associated with $\mu_\alpha$.
\end{lem}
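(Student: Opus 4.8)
The plan is to run a Pohozaev argument on the shrinking Euclidean ball $\Omega_\alpha:=B_0(\delta\sqrt{\mu_\alpha})$, after using conformal flatness to straighten the metric. Pass to a subsequence so that $x_\alpha\to x_\infty$ and choose conformal coordinates near $x_\infty$ in which $g=\Lambda^{4/(n-4)}\xi$ with $\Lambda>0$ smooth and $\xi$ Euclidean; re-center them at $x_\alpha$ and set $\hat u_\alpha=\Lambda u_\alpha$. Since $b_\alpha\Delta_g u_\alpha=-\mathrm{div}_g(b_\alpha\nabla u_\alpha)$, rewriting \eqref{PertCritEqt} by means of \eqref{ConfEqt} gives
\[
\Delta_g^2u_\alpha+b_\alpha\Delta_gu_\alpha+c_\alpha u_\alpha=P_gu_\alpha+\mathrm{div}_g\bigl((A_g-b_\alpha g)\,du_\alpha\bigr)+\Bigl(c_\alpha-\tfrac{n-4}{2}Q_g\Bigr)u_\alpha,
\]
where $P_g$ is the geometric Paneitz operator of \eqref{ConfEqt} and $A_g$ is as in \eqref{DefAg}, so that by conformal covariance of $P_g$ (namely $\Delta^2(\Lambda u_\alpha)=\Lambda^{2^\sharp-1}P_gu_\alpha$, since $n+4=(2^\sharp-1)(n-4)$) one obtains, in a fixed Euclidean ball,
\[
\Delta^2\hat u_\alpha=\hat u_\alpha^{2^\sharp-1}+F_\alpha,\qquad F_\alpha:=-\Lambda^{2^\sharp-1}\Bigl[\mathrm{div}_g\bigl((A_g-b_\alpha g)\,du_\alpha\bigr)+\bigl(c_\alpha-\tfrac{n-4}{2}Q_g\bigr)u_\alpha\Bigr].
\]

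Next I would apply \eqref{PohTypeIdentity} to $u=\hat u_\alpha$ on $\Omega_\alpha$ and substitute this expression for $\Delta^2\hat u_\alpha$. In $\int_{\Omega_\alpha}\bigl(x^k\partial_k\hat u_\alpha+\tfrac{n-4}{2}\hat u_\alpha\bigr)\hat u_\alpha^{2^\sharp-1}dx$ the interior contributions cancel (because $n/2^\sharp=(n-4)/2$), leaving $\tfrac{1}{2^\sharp}\int_{\partial\Omega_\alpha}(x,\nu)\hat u_\alpha^{2^\sharp}d\sigma$; and integrating $\int_{\Omega_\alpha}\bigl(x^k\partial_k\hat u_\alpha+\tfrac{n-4}{2}\hat u_\alpha\bigr)F_\alpha\,dx$ by parts twice — carefully converting $dx$, the Euclidean gradient and $\hat u_\alpha=\Lambda u_\alpha$ back into $dv_g$, $\nabla_g$ and $u_\alpha$, and integrating the resulting Hessian term once more in $x^k$ — isolates $\int_{\Omega_\alpha}(A_g-b_\alpha g)(\nabla u_\alpha,\nabla u_\alpha)\,dv_g$ up to a nonzero universal factor, plus: boundary integrals on $\partial\Omega_\alpha$; error terms carrying a factor $\nabla\Lambda$ or $x^k\partial_k(A_g-b_\alpha g)$, hence $O(\sqrt{\mu_\alpha})$ on $\Omega_\alpha$, which a Cauchy–Schwarz and the bounds $\int_{\Omega_\alpha}u_\alpha^2\,dv_g=o\bigl(\int_{\Omega_\alpha}|\nabla u_\alpha|^2\,dv_g\bigr)$, $\sqrt{\mu_\alpha}\,\int_{\Omega_\alpha}|\nabla u_\alpha|^2\,dv_g=o\bigl(\int_{\Omega_\alpha}|\nabla u_\alpha|^2\,dv_g\bigr)$ (both from Proposition \ref{SharpPtEst}) turn into $o\bigl(\int_{\Omega_\alpha}|\nabla u_\alpha|^2\,dv_g\bigr)$; and the zeroth-order term, which the same pointwise estimates bound by $o\bigl(\int_{\Omega_\alpha}|\nabla u_\alpha|^2\,dv_g\bigr)$ as well. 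Everything then reduces to showing that the whole collection of boundary integrals over $\partial\Omega_\alpha$ (those of \eqref{PohTypeIdentity} together with $-\tfrac{1}{2^\sharp}\int_{\partial\Omega_\alpha}(x,\nu)\hat u_\alpha^{2^\sharp}d\sigma$ and the boundary terms generated by the integrations by parts) equals $-(K(u_\infty)+o(1))\mu_\alpha^{(n-4)/2}$.

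The core is this boundary analysis, and it is also where the main obstacle lies. On $\partial\Omega_\alpha$ one has $r_\alpha\simeq\delta\sqrt{\mu_\alpha}$ (for $\delta$ small, discarding the remaining concentration points via \eqref{EqtBlowUp3}), so Proposition \ref{SharpPtEst} and Lemma \ref{Lem3bis} only give each boundary integral as $O(\delta^{4-n}\mu_\alpha^{(n-4)/2})$, which does \emph{not} tend to $0$; the scale-invariant estimates of Section \ref{PtEst} are simply too coarse on this intermediate sphere. The resolution is to write $\hat u_\alpha=\hat u_\infty+(\hat u_\alpha-\hat u_\infty)$ with $\hat u_\infty=\Lambda u_\infty$ and to upgrade, on the annulus $\Omega_\alpha\setminus B_0(R\mu_\alpha)$, the decomposition \eqref{EqtBlowUp1} together with the sharp estimates of Section \ref{PtEst} into the statement that $\hat u_\alpha-\hat u_\infty$ agrees in $C^3$ with $\Lambda B_\alpha$ — where $B_\alpha=B_\alpha^1$ has tail $\mu_\alpha^{(n-4)/2}\lambda_n^{(n-4)/4}|x|^{4-n}$ — up to an error beating both summands. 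Granting this: the pure $\hat u_\infty$ part of the boundary terms is $O\bigl((\delta\sqrt{\mu_\alpha})^{n-1}\bigr)=o(\mu_\alpha^{(n-4)/2})$ since $\hat u_\infty$ is smooth on a ball shrinking to a point; the pure $\Lambda B_\alpha$ part, combined with $-\tfrac{1}{2^\sharp}\int_{\partial\Omega_\alpha}(x,\nu)\hat u_\alpha^{2^\sharp}d\sigma$, reorganizes — because $\Lambda B_\alpha$ solves $\Delta^2w=w^{2^\sharp-1}$ up to lower order and \eqref{PohTypeIdentity} is an exact identity — into $\tfrac{1}{2^\sharp}\int_{\partial\Omega_\alpha}(x,\nu)(\Lambda B_\alpha)^{2^\sharp}d\sigma+o(\mu_\alpha^{(n-4)/2})=o(\mu_\alpha^{(n-4)/2})$, so that the $\delta^{4-n}$-size contributions cancel; and the bilinear $\hat u_\infty$-against-$\Lambda B_\alpha$ part, evaluated by freezing $\Lambda u_\infty$ at $x_\infty$ and using $\Delta(|x|^{4-n})=2(n-4)|x|^{2-n}$ together with explicit integrals of $|x|^{4-n}$ and its derivatives over $\partial B_0(1)$, equals $-(K(u_\infty)+o(1))\mu_\alpha^{(n-4)/2}$ with $K(u_\infty)=C_n\,u_\infty(x_\infty)$ for a positive constant $C_n$ depending only on $n$ and the conformal chart at $x_\infty$. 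Finally, since $c-\tfrac{b^2}{4}<0$, the factorization $P_g=(\Delta_g+d_1)(\Delta_g+d_2)$ with $d_1,d_2>0$ and the maximum principle force $u_\infty>0$ in $M$ or $u_\infty\equiv0$; hence $K(u_\infty)\ge0$, and $K(u_\infty)>0$ precisely when $u_\infty\not\equiv0$, which is the assertion of the lemma.

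The hard part, as flagged, is the intermediate-scale $C^3$-asymptotic $\hat u_\alpha=\hat u_\infty+\Lambda B_\alpha+o(\cdot)$ and, through it, the cancellation of the $\delta^{4-n}\mu_\alpha^{(n-4)/2}$-size boundary contributions: it is the exact-identity structure of \eqref{PohTypeIdentity} applied to the near-solution $\Lambda B_\alpha$ that removes those terms, leaving only the genuinely $\delta$-independent $\hat u_\infty$–$B_\alpha$ interaction defining $K(u_\infty)$.
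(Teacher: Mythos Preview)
Your overall architecture --- Pohozaev identity in conformal coordinates on $B_0(\delta\sqrt{\mu_\alpha})$, isolating $\int (A_g-b_\alpha g)(\nabla u_\alpha,\nabla u_\alpha)\,dv_g$ from the interior terms, and controlling the lower-order remainders by $o\bigl(\int|\nabla u_\alpha|^2\bigr)$ via the pointwise estimates --- matches the paper exactly. The divergence is entirely in the boundary analysis, and there your approach has a genuine gap.

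The paper does not attempt a direct $C^3$-expansion of $\hat u_\alpha$ on $\partial\Omega_\alpha$. Instead it rescales: setting $\overline u_\alpha(x)=u_\alpha(\exp_{x_\alpha}(\sqrt{\mu_\alpha}\,x))$, it quotes from \cite{HebRobWen} the convergence
\[
\overline u_\alpha \;\longrightarrow\; \frac{A}{|x|^{n-4}}+\hat\varphi(x)
\qquad\text{in }C^3_{\mathrm{loc}}\bigl(B_0(2\delta)\setminus\{0\}\bigr),
\]
where $\hat\varphi$ is \emph{biharmonic}, nonnegative, and strictly positive when $u_\infty\not\equiv0$. After this rescaling the boundary integrals live on the fixed sphere $\partial B_0(\delta)$ and pass to the limit. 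Since $\Delta^2\tilde u=0$ for $\tilde u=A|x|^{4-n}+\hat\varphi$ on punctured balls, a second application of \eqref{PohTypeIdentity} on the annulus $B_0(\delta)\setminus B_0(r)$ shows the limiting boundary expression is $r$-independent, and letting $r\to0$ yields $K(u_\infty)=(n-2)(n-4)^2\omega_{n-1}A\,\hat\varphi(0)$.

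Your proposed asymptotic $\hat u_\alpha-\hat u_\infty\approx\Lambda B_\alpha^1$ in $C^3$ is precisely the special case in which $\hat\varphi$ is the constant $u_\infty(x_\infty)$. But $\hat\varphi$ need not be constant: the structure equation \eqref{EqtBlowUp3} does not rule out a second bubble $B_\alpha^j$ with $\mu_{j,\alpha}\sim\mu_\alpha$ sitting at distance $\sim c\sqrt{\mu_\alpha}$ from $x_\alpha$ (note $\sqrt{\mu_\alpha}/\mu_\alpha\to\infty$). Such a bubble contributes a term $\sim A'|x-y_0|^{4-n}$ to the rescaled limit, hence a nonconstant biharmonic piece to $\hat\varphi$. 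On $\partial\Omega_\alpha$ it has size $O(1)$, comparable to $B_\alpha^1$ itself, so your error $\hat u_\alpha-\hat u_\infty-\Lambda B_\alpha^1$ is \emph{not} small there; its cross-terms with $B_\alpha^1$ in the Pohozaev boundary form are of the exact order $\mu_\alpha^{(n-4)/2}$ and feed into $K(u_\infty)$. Consequently your formula $K(u_\infty)=C_n\,u_\infty(x_\infty)$ is in general only a lower bound for the true constant, and the identity \eqref{LemEstPohoz} fails with your value. The repair is exactly the paper's: replace the constant $u_\infty(x_\infty)$ by the full biharmonic remainder $\hat\varphi$, and use the Pohozaev-on-annulus trick plus $\Delta^2\hat\varphi=0$ to extract $\hat\varphi(0)$ cleanly. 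That $C^3$-convergence at the intermediate scale is not a consequence of Section~\ref{PtEst}; the paper imports it from \cite{HebRobWen}.
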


\begin{proof}[Proof of Lemma \ref{LemPrf1}] Let $\overline{u}_\alpha$ be defined in bounded subsets of $\mathbb{R}^n$ 
by
\begin{equation}\label{DefUbarAlpha}
\overline{u}_\alpha(x) = u_\alpha\left(\exp_{x_\alpha}(\sqrt{\mu_\alpha}x)\right)\hskip.1cm .
\end{equation}
By Hebey, Robert and Wen \cite{HebRobWen}, there exist $\delta > 0$, $A > 0$, and a biharmonic 
function $\hat\varphi \in C^4\left(B_0(2\delta)\right)$ such that, up to a subsequence,
\begin{equation}\label{EqtLem7.1}
\overline{u}_\alpha(x) \to \frac{A}{\vert x\vert^{n-4}} + \hat\varphi(x)
\end{equation}
in $C^3_{loc}\left(B_0(2\delta)\backslash\{0\}\right)$ as $\alpha \to +\infty$, with the property that 
$\hat\varphi$ is nonnegative, and even positive in $B_0(2\delta)$ if $u_\infty \not\equiv 0$. Moreover, there also holds that 
for any $\alpha$,
\begin{equation}\label{EqtLem7.2}
\int_{B_{x_\alpha}(\delta\sqrt{\mu_\alpha})}u_\alpha^2dv_g 
= o(1)\int_{B_{x_\alpha}(\delta\sqrt{\mu_\alpha})}\vert\nabla u_\alpha\vert^2dv_g\hskip.1cm ,
\end{equation}
where $o(1) \to 0$ as $\alpha \to +\infty$. Now we let $x_\infty$ be the limit of the $x_\alpha$'s and let $\delta_0 > 0$ and $\hat g$ 
be such that $\hat g$ is flat in $B_{x_\infty}(4\delta_0)$. We write that $g = \varphi^{4/(n-4)}\hat g$ with 
$\varphi(x_\infty) = 1$, and let $\hat u_\alpha = u_\alpha\varphi$. Define
$$B_\alpha = \frac{4b_\alpha}{n-4} \varphi^{\frac{8-n}{n-4}}\hat g + \varphi^{\frac{12-n}{n-4}}A_g$$
and
\begin{eqnarray*}
&&h_\alpha = b_\alpha\varphi^{\frac{2}{n-4}}\Delta_{\hat g}\varphi^{\frac{2}{n-4}} 
- \frac{n-2}{4(n-1)} b_\alpha \varphi^{\frac{8}{n-4}} S_g 
+ c_\alpha \varphi^{\frac{8}{n-4}}\\
&&\hskip.4cm - \frac{n-4}{2}Q_g\varphi^{\frac{8}{n-4}} + \varphi^{\frac{n+4}{n-4}} 
\hbox{div}_g(A_gd\varphi^{-1})\hskip.1cm ,
\end{eqnarray*}
where $Q_g$ is the $Q$-curvature of $g$ and $A_g$ is as in \eqref{DefAg}. By conformal invariance of the geometric 
Paneitz operator in the left hand side of \eqref{ConfEqt}, there holds that
\begin{equation}\label{EuclEqt}
\begin{split}
&\Delta^2\hat u_\alpha + b_\alpha\varphi^{\frac{4}{n-4}}\Delta\hat u_\alpha 
- B_\alpha(\nabla\varphi,\nabla\hat u_\alpha) + h_\alpha\hat u_\alpha\\
&\hskip.4cm + \varphi^{\frac{n+4}{n-4}}\hbox{div}_g(\varphi^{-1}A_gd\hat u_\alpha) 
= \hat u_\alpha^{2^\sharp-1}
\end{split}
\end{equation}
in $B_{x_\infty}(4\delta)$, where $A_g$ is as in \eqref{DefAg}, $B_\alpha$, and $h_\alpha$ are as above, and 
$\Delta = \Delta_{\hat g}$ is the Euclidean Laplacian. As a remark, \eqref{EuclEqt} can be rewritten as
$$\Delta^2\hat u_\alpha +
\varphi^{\frac{4}{n-4}}\hbox{div}_\xi\left(\left(A_g-b_\alpha g\right)d\hat
u_\alpha\right) + \dots = \hat u^{2^\sharp-1}\hskip.1cm,$$
where the dots represent lower order terms. 
Now we let $\delta > 0$ be sufficiently small. We regard 
$\hat u_\alpha$ as a function in the Euclidean space and assimilate $x_\alpha$ to $0$ thanks to the exponential 
map $\exp_{x_\alpha}$ with respect to $g$. 
With an abusive use of notations, we still denote by $\varphi$ the function 
$\varphi\circ\exp_{x_\alpha}$, by $A_g$ the tensor field $(\exp_{x_\alpha})^\star A_g$, and 
by $\hat g$ the metric $(\exp_{x_\alpha})^\star\hat g$.  
Applying the Pohozaev identity 
(\ref{PohTypeIdentity}) to the $\hat u_\alpha$'s in $B_0(\delta\sqrt{\mu_\alpha})$ we get that
\begin{equation}\label{PohTypeIdentity2}
\begin{split}
&\int_{B_0(\delta\sqrt{\mu_\alpha})}\left(x^k\partial_k\hat u_\alpha\right)\Delta^2\hat u_\alpha dx 
+ \frac{n-4}{2} \int_{B_0(\delta\sqrt{\mu_\alpha})}\hat u_\alpha\Delta^2\hat u_\alpha dx\\
&= \frac{n-4}{2} \int_{\partial B_0(\delta\sqrt{\mu_\alpha})}
\left(-\hat u_\alpha\frac{\partial\Delta\hat u_\alpha}{\partial\nu} + 
\frac{\partial\hat u_\alpha}{\partial\nu}\Delta\hat u_\alpha\right)d\sigma\\
&+ \int_{\partial B_0(\delta\sqrt{\mu_\alpha})}\left(\frac{1}{2}(x,\nu)(\Delta\hat u_\alpha)^2 
- (x,\nabla\hat u_\alpha)\frac{\partial\Delta\hat u_\alpha}{\partial\nu} 
+ \frac{\partial(x,\nabla\hat u_\alpha)}{\partial\nu}\Delta\hat u_\alpha\right)d\sigma\hskip.1cm.
\end{split}
\end{equation}
Integrating by parts, using (\ref{EuclEqt}), we can also write that
\begin{equation}\label{PohTypeIdentityIntParts}
\begin{split}
&\int_{B_0(\delta\sqrt{\mu_\alpha})}\left(x^k\partial_k\hat u_\alpha\right)\Delta^2\hat u_\alpha dx 
+ \frac{n-4}{2} \int_{B_0(\delta\sqrt{\mu_\alpha})}\hat u_\alpha\Delta^2\hat u_\alpha dx\\
&= b_\alpha\int_{B_0(\delta\sqrt{\mu_\alpha})}\varphi^{\frac{4}{n-4}}\vert\nabla\hat u_\alpha\vert^2dx 
- \int_{B_0(\delta\sqrt{\mu_\alpha})}\varphi^{\frac{8}{n-4}}A_g(\nabla\hat u_\alpha,\nabla\hat u_\alpha)dx\\
&\hskip.4cm + o\left(\int_{B_0(\delta\sqrt{\mu_\alpha})}\vert\nabla\hat u_\alpha\vert^2dx\right) 
+ O\left(\int_{B_0(\delta\sqrt{\mu_\alpha})}\hat u_\alpha^2dx\right)\\
&\hskip.4cm + O\left(\int_{\partial B_0(\delta\sqrt{\mu_\alpha})}\hat u_\alpha^2(1 + \hat u_\alpha^{2^\sharp-2})dx\right)
+ O\left(\int_{\partial B_0(\delta\sqrt{\mu_\alpha})}\vert\nabla\hat u_\alpha\vert^2dx\right)\hskip.1cm ,
\end{split}
\end{equation}
where, in this equation, as already mentioned, we regard 
$\varphi$ and $A_g$ as defined in the Euclidean space. 
The proof of (\ref{PohTypeIdentityIntParts}) involves only 
straightforward computations. By \eqref{EqtLem7.1},
\begin{equation}\label{Sec6Eqt3}
\begin{split}
&\int_{\partial B_0(\delta\sqrt{\mu_\alpha})}\hat u_\alpha^2(1 + \hat u_\alpha^{2^\sharp-2})dx 
= o\left(\mu_\alpha^{\frac{n-4}{2}}\right)
\hskip.1cm ,\hskip.1cm\hbox{and}\\
&\int_{\partial B_0(\delta\sqrt{\mu_\alpha})}\vert\nabla\hat u_\alpha\vert^2dx = 
o\left(\mu_\alpha^{\frac{n-4}{2}}\right)
\end{split}
\end{equation}
while, by \eqref{EqtLem7.2},
\begin{equation}\label{Sec6Eqt4}
\int_{B_0(\delta\sqrt{\mu_\alpha})}\hat u_\alpha^2dx 
= o\left(\int_{B_0(\delta\sqrt{\mu_\alpha})}\vert\nabla\hat u_\alpha\vert^2dx\right)\hskip.1cm .
\end{equation}
Independently, we can also write with the change of variables $x = \sqrt{\mu_\alpha}y$ and 
\eqref{EqtLem7.1} that if $R_\alpha$ stands for the right hand side in 
(\ref{PohTypeIdentity2}), then
\begin{equation}\label{Sec6Eqt5}
\begin{split}
\mu_\alpha^{-\frac{n-4}{2}}R_\alpha
&\to \frac{n-4}{2} \int_{\partial B_0(\delta)}
\left(-\tilde u\frac{\partial\Delta\tilde u}{\partial\nu} + 
\frac{\partial\tilde u}{\partial\nu}\Delta\tilde u\right)d\sigma\\
&\hskip.2cm + \int_{\partial B_0(\delta)}\left(\frac{1}{2}(x,\nu)(\Delta\tilde u)^2 
- (x,\nabla\tilde u)\frac{\partial\Delta\tilde u}{\partial\nu} 
+ \frac{\partial(x,\nabla\tilde u)}{\partial\nu}\Delta\tilde u\right)d\sigma
\end{split}
\end{equation}
as $\alpha \to +\infty$, where
\begin{equation}\label{Sec6Eqt6}
\tilde u(x) = \frac{A}{\vert x\vert^{n-4}} + \hat\varphi(x)
\end{equation}
is given by \eqref{EqtLem7.1} (so that $\Delta^2\hat\varphi = 0$). Coming back to the Pohozaev identity 
\eqref{PohTypeIdentity}, taking $\Omega = B_0(\delta)\backslash B_0(r)$, 
and since $\Delta^2\tilde u = 0$ in $\Omega$, it comes that
\begin{equation}\label{Sec6Eqt7}
\begin{split}
&\frac{n-4}{2} \int_{\partial B_0(\delta)}
\left(-\tilde u\frac{\partial\Delta\tilde u}{\partial\nu} + 
\frac{\partial\tilde u}{\partial\nu}\Delta\tilde u\right)d\sigma\\
&\hskip.2cm + \int_{\partial B_0(\delta)}\left(\frac{1}{2}(x,\nu)(\Delta\tilde u)^2 
- (x,\nabla\tilde u)\frac{\partial\Delta\tilde u}{\partial\nu} 
+ \frac{\partial(x,\nabla\tilde u)}{\partial\nu}\Delta\tilde u\right)d\sigma\\
&= \frac{n-4}{2} \int_{\partial B_0(r)}
\left(-\tilde u\frac{\partial\Delta\tilde u}{\partial\nu} + 
\frac{\partial\tilde u}{\partial\nu}\Delta\tilde u\right)d\sigma\\
&\hskip.2cm + \int_{\partial B_0(r)}\left(\frac{1}{2}(x,\nu)(\Delta\tilde u)^2 
- (x,\nabla\tilde u)\frac{\partial\Delta\tilde u}{\partial\nu} 
+ \frac{\partial(x,\nabla\tilde u)}{\partial\nu}\Delta\tilde u\right)d\sigma
\end{split}
\end{equation}
for all $r > 0$. Combining (\ref{Sec6Eqt5}), (\ref{Sec6Eqt6}), and (\ref{Sec6Eqt7}), letting 
$r \to 0$, we then get that
\begin{equation}\label{Sec6ConvEqtConcl}
\mu_\alpha^{-\frac{n-4}{2}}R_\alpha \to K(u_\infty)
\end{equation}
as $\alpha\to+\infty$, where $K(u_\infty) = (n-2)(n-4)^2\omega_{n-1}A\hat\varphi(0)$. We have that $A > 0$ and we know that 
$\hat\varphi(0) > 0$ if $u_\infty\not\equiv 0$. It follows that $K(u_\infty) > 0$ if $u_\infty \not\equiv 0$.
By combining (\ref{PohTypeIdentity2})--(\ref{Sec6Eqt4}), and (\ref{Sec6ConvEqtConcl}), we can write that
\begin{equation}\label{Sec6ConclPohP1}
\begin{split}
&b_\alpha\int_{B_0(\delta\sqrt{\mu_\alpha})}\varphi^{\frac{4}{n-4}}\vert\nabla\hat u_\alpha\vert^2dx 
- \int_{B_0(\delta\sqrt{\mu_\alpha})}\varphi^{\frac{8}{n-4}}A_g(\nabla\hat u_\alpha,\nabla\hat u_\alpha)dx\\
&= o\left(\int_{B_0(\delta\sqrt{\mu_\alpha})}\vert\nabla\hat u_\alpha\vert^2dx\right) 
+ \bigl(K(u_\infty) + o(1)\bigr)\mu_\alpha^{\frac{n-4}{2}}\hskip.1cm ,
\end{split}
\end{equation}
where $o(1) \to 0$ as $\alpha \to +\infty$. The norm of $\nabla\hat u_\alpha$ in the first 
term of (\ref{Sec6ConclPohP1}) is with respect to the Euclidean metric $\hat g = \xi$. Noting that 
$\vert\nabla u\vert_{\hat g}^2 = \varphi^{4/(n-4)}\vert\nabla u\vert_g^2$, it follows from 
(\ref{Sec6ConclPohP1}) that
\begin{eqnarray*}
&&\int_{B_0(\delta\sqrt{\mu_\alpha})}\varphi^{\frac{8}{n-4}}
\bigl(A_g-b_\alpha g\bigr)(\nabla\hat u_\alpha,\nabla\hat u_\alpha)dx\\
&&= o\left(\int_{B_0(\delta\sqrt{\mu_\alpha})}\vert\nabla\hat u_\alpha\vert^2dx\right) 
- \bigl(K(u_\infty) + o(1)\bigr)\mu_\alpha^{\frac{n-4}{2}}
\end{eqnarray*}
an equation from which we easily get with \eqref{EqtLem7.2} that 
\begin{equation}\label{Sec6EqtConclArgProof2}
\begin{split}
&\int_{B_{x_\alpha}(\delta\sqrt{\mu_\alpha})}
\bigl(A_g-b_\alpha g\bigr)(\nabla u_\alpha,\nabla u_\alpha)dv_g\\
&= o\left(\int_{B_0(\delta\sqrt{\mu_\alpha})}\vert\nabla u_\alpha\vert^2dv_g\right) 
- \bigl(K(u_\infty) + o(1)\bigr)\mu_\alpha^{\frac{n-4}{2}}\hskip.1cm .
\end{split}
\end{equation}
This ends the proof of the lemma.
\end{proof}

Thanks to Lemma \ref{LemPrf1}, and to the estimates in Section \ref{PtEst}, we can prove Theorem \ref{LimitProfileThm}.

\begin{proof}[Proof of Theorem \ref{LimitProfileThm}] By Lemma  \ref{LemPrf1} it suffices to prove that when $n = 5, 6, 7$,
\begin{equation}\label{Thm1EstPf}
\int_{B_{x_\alpha}(\delta\sqrt{\mu_\alpha})}\vert\nabla u_\alpha\vert^2dv_g = o(\mu_\alpha^{\frac{n-4}{2}})
\hskip.1cm ,
\end{equation}
where $\delta > 0$ is as in Lemma \ref{LemPrf1}. For $\alpha \gg 1$ sufficiently large, we write that
\begin{equation}\label{Eqt1PfThm1}
\begin{split}
\int_{B_{x_\alpha}(\delta\sqrt{\mu_\alpha})}\vert\nabla u_\alpha\vert^2dv_g 
&\le \sum_{i=1}^k\int_{B_{x_{i,\alpha}}(R\mu_\alpha)}\vert\nabla u_\alpha\vert^2dv_g\\
&+ \int_{B_{x_\alpha}(\delta\sqrt{\mu_\alpha})\backslash\bigcup_{i=1}^kB_{x_{i,\alpha}}(R\mu_\alpha)}\vert\nabla u_\alpha\vert^2dv_g
\hskip.1cm ,
\end{split}
\end{equation}
where $R > 0$ is as in Proposition \ref{SharpPtEst}. By the embedding $H^2 \subset H^{1,2^\star}$, where 
$2^\star = \frac{2n}{n-2}$, the functions $\vert\nabla u_\alpha\vert$ are bounded in $L^{2^\star}$. Using 
H\"older's inequalities it follows that for any $i$,
\begin{equation}\label{Eqt2PfThm1}
\int_{B_{x_{i,\alpha}}(R\mu_\alpha)}\vert\nabla u_\alpha\vert^2dv_g 
\le C \mu_\alpha^{n(1-\frac{2}{2^\star})} = C\mu_\alpha^2
\hskip.1cm .
\end{equation}
There holds that $\mu_\alpha^2 = o(\mu_\alpha^{\frac{n-4}{2}})$ when $n = 5, 6, 7$. Independently, thanks to 
Proposition \ref{SharpPtEst},  we can write that
\begin{equation}\label{Eqt3PfThm1}
\begin{split}
&\int_{B_{x_\alpha}(\delta\sqrt{\mu_\alpha})\backslash\bigcup_{i=1}^kB_{x_{i,\alpha}}(R\mu_\alpha)}\vert\nabla u_\alpha\vert^2dv_g\\
&\le C\mu_\alpha^{\frac{n}{2}} + C\mu_\alpha^{n-4}
\sum_{i=1}^k\int_{B_{x_\alpha}(\delta\sqrt{\mu_\alpha})\backslash B_{x_{i,\alpha}}(R\mu_\alpha)}d_g(x_{i,\alpha},\cdot)^{6-2n}dv_g
\hskip.1cm .
\end{split}
\end{equation}
There holds that
\begin{equation}\label{Eqt4PfThm1}
\int_{B_{x_\alpha}(\delta\sqrt{\mu_\alpha})\backslash B_{x_{i,\alpha}}(R\mu_\alpha)}d_g(x_{i,\alpha},\cdot)^{6-2n}dv_g \le C S_\alpha
\end{equation}
for all $\alpha$, where $S_\alpha = 1$ when $n = 5$, $S_\alpha = \ln\frac{1}{\mu_\alpha}$ when $n = 6$, and 
$S_\alpha = \frac{1}{\mu_\alpha}$ when $n \ge 7$. 
Combining \eqref{Eqt1PfThm1}--\eqref{Eqt4PfThm1} we get \eqref{Thm1EstPf}. This ends the proof of Theorem \ref{LimitProfileThm}.
\end{proof}

\section{Trace estimates}\label{ProofTraceEst}

We prove trace estimates in this section. Such estimates are required to prove Theorem \ref{StabilityThm}. As in Section \ref{PtEst} we do not need to assume here that 
$g$ is conformally flat. We let $(M,g)$ be a compact Riemannian manifold  
and let $(b_\alpha)_\alpha$ and $(c_\alpha)_\alpha$ be converging sequences 
of real numbers with limits $b$ and $c$ as $\alpha \to \infty$, where $c - \frac{b^2}{4} < 0$. We let also $(u_\alpha)_\alpha$ be a bounded sequence in $H^2$ of 
positive nontrivial solutions of \eqref{PertCritEqt} satisfying \eqref{BlowUpAssumpt}. We aim at proving that if $A$ is a smooth $(2,0)$-tensor field, the 
the integral of $A(\nabla u_\alpha,\nabla u_\alpha)$ around the maximum blow-up point $x_\alpha$ behaves like the trace of $A$ at $x_\infty$ times $\mu_\alpha^2$, where 
$x_\alpha \to x_\infty$ as $\alpha \to +\infty$. 
In what follows we define 
$\tilde I_1$ and $\tilde I_2$ to be the subsets of $\{1,\dots,k\}$ given by
\begin{equation}\label{DefSubsetsInd}
\begin{split}
&\tilde I_1 = \Bigl\{i = 1,\dots,k~\hbox{s.t.}~d_g(x_{i,\alpha},x_\alpha) = o(1)\Bigr\}\hskip.1cm ,\hskip.1cm\hbox{and}\\
&\tilde I_2 = \Bigl\{i = 1,\dots,k~\hbox{s.t.}~d_g(x_{i,\alpha},x_\alpha) = o(\sqrt{\mu_\alpha})\Bigr\}\hskip.1cm ,
\end{split}
\end{equation}
where the $x_{i,\alpha}$'s and $k$ are given by the decomposition \eqref{EqtBlowUp1}, $\mu_\alpha$ is as in \eqref{DefMuAlpha}, and $x_\alpha$ is the blow-up point associated with 
$\mu_\alpha$. Namely, assuming that, up to a subsequence, $\mu_\alpha = \mu_{i_0,\alpha}$ for some $i_0$ and all $\alpha$, then 
$x_\alpha = x_{i_0,\alpha}$. 

\begin{prop}\label{TraceEst} Let $(M,g)$ be a smooth compact Riemannian manifold of dimension $n \ge 7$, and $b, c > 0$ be positive real 
numbers such that $c - \frac{b^2}{4} < 0$. Let also $(b_\alpha)_\alpha$ and $(c_\alpha)_\alpha$ be converging sequences 
of real numbers with limits $b$ and $c$ as $\alpha \to +\infty$, and $(u_\alpha)_\alpha$ be a bounded sequence in $H^2$ of 
positive nontrivial solutions of \eqref{PertCritEqt} satisfying \eqref{BlowUpAssumpt}. Let $A$ be a smooth $(2,0)$-tensor field. Let $\delta > 0$ be 
such that $d_g(x_{i,\alpha},x_\alpha) \ge 2\delta\sqrt{\mu_\alpha}$ for all $\alpha$ and all $i \not\in \tilde I_2$, where $\mu_\alpha$ is as in \eqref{DefMuAlpha}, 
$x_\alpha$ is the blow-up point associated with 
$\mu_\alpha$, and $\tilde I_2$ is as in \eqref{DefSubsetsInd}. Then there exists $\beta > 0$ 
such that, up to a subsequence,
\begin{equation}\label{TraceEqtPropEqt1}
\lim_{\alpha\to +\infty}\frac{1}{\mu_\alpha^2}
\int_{B_{x_\alpha}(\delta\sqrt{\mu_\alpha})}A(\nabla u_\alpha,\nabla u_\alpha)dv_g = \beta \hbox{Tr}_g(A)(x_\infty)
\hskip.1cm ,
\end{equation}
where $x_\infty$ is the limit of the $x_\alpha$'s. 
Similarly, if $u_\infty \equiv 0$, and $\delta > 0$ is 
such that $d_g(x_{i,\alpha},x_\alpha) \ge 2\delta$ for all $\alpha$ and all $i \not\in \tilde I_1$, where $x_\alpha$ is the blow-up point associated with 
$\mu_\alpha$, and $\tilde I_1$ is as in \eqref{DefSubsetsInd}, then
\begin{equation}\label{TraceEqtPropEqt2}
\lim_{\alpha\to +\infty}\frac{1}{\mu_\alpha^2}
\int_{B_{x_\alpha}(\delta)}A(\nabla u_\alpha,\nabla u_\alpha)dv_g = \beta \hbox{Tr}_g(A)(x_\infty)
\end{equation}
for some $\beta > 0$, where, here again, $x_\infty$ is the limit of the $x_\alpha$'s.
\end{prop}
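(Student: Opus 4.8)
The plan is to blow the integral up at the maximal concentration point $x_\alpha$ at the scale $\mu_\alpha$ and to recognise the limit as a positive multiple of the Dirichlet energy of the ground state $B$ of \eqref{GroundStateShape0} times $\hbox{Tr}_g(A)(x_\infty)$. Writing $\tilde u_\alpha=R_{x_\alpha}^{\mu_\alpha}u_\alpha$ and working in $g$-normal coordinates centred at $x_\alpha$, the change of variables $y=\exp_{x_\alpha}(\mu_\alpha x)$, together with $dv_g=(1+O(|y|^2))dy$, $g_{ij}=\delta_{ij}+O(|y|^2)$ and the scaling identity $\partial_iu_\alpha(\mu_\alpha x)=\mu_\alpha^{-\frac{n-4}{2}-1}\partial_i\tilde u_\alpha(x)$, gives
\[
\frac{1}{\mu_\alpha^2}\int_{B_{x_\alpha}(\delta\sqrt{\mu_\alpha})}A(\nabla u_\alpha,\nabla u_\alpha)\,dv_g=\int_{B_0(\delta/\sqrt{\mu_\alpha})}A_{ij}\bigl(\exp_{x_\alpha}(\mu_\alpha x)\bigr)\,\partial_i\tilde u_\alpha\,\partial_j\tilde u_\alpha\,dx+o(1),
\]
the $o(1)$ absorbing the curvature remainders because $\int_{B_{x_\alpha}(\delta\sqrt{\mu_\alpha})}|y|^2|\nabla u_\alpha|^2dv_g\le\delta^2\mu_\alpha\int_{B_{x_\alpha}(\delta\sqrt{\mu_\alpha})}|\nabla u_\alpha|^2dv_g$, once we have the a priori energy bound $\int_{B_{x_\alpha}(\delta\sqrt{\mu_\alpha})}|\nabla u_\alpha|^2dv_g=O(\mu_\alpha^2)$, which is exactly the estimate to be proved for $A=g$ and comes out of the decomposition below.

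I would then split $B_{x_\alpha}(\delta\sqrt{\mu_\alpha})$ into the core ball $B_{x_\alpha}(R\mu_\alpha)$, the balls $B_{x_{i,\alpha}}(R\mu_\alpha)$ around the other blow-up centres carried inside (those $i\in\tilde I_2$), and the remaining outer region, and I would also excise from the core small $\varepsilon$-neighbourhoods of the points of $\mathcal S_{i_0,r}$. On the excised core, Lemma \ref{Lem2} gives $\tilde u_\alpha\to B$ in $C^4_{loc}(\mathbb R^n\setminus\mathcal S_{i_0,r})$ while $A_{ij}(\exp_{x_\alpha}(\mu_\alpha x))\to A_{ij}(x_\infty)$ and the rescaled volume element tends to $dx$, so this piece tends to $\int_{B_0(R)}A_{ij}(x_\infty)\partial_iB\,\partial_jB\,dx$ up to the $\varepsilon$-holes which vanish as $\varepsilon\to0$; since $B$ is radial, $\partial_iB(x)\partial_jB(x)$ has the form $h(|x|)x_ix_j$ and $\int h(|x|)x_ix_j\,dx=\frac{\delta_{ij}}{n}\int h(|x|)|x|^2\,dx$, so letting $\varepsilon\to0$ and then $R\to+\infty$ this contributes $\frac{1}{n}\bigl(\int_{\mathbb R^n}|\nabla B|^2\,dx\bigr)\hbox{Tr}_g(A)(x_\infty)$. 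On the outer region the concentration centres having been removed there holds $r_\alpha\gg\mu_\alpha$, so Proposition \ref{SharpPtEst} gives $|\nabla u_\alpha|^2\le C\mu_\alpha^{n-4}r_\alpha^{6-2n}+C\Vert u_\infty\Vert_{L^\infty}^2$ and, after rescaling, a bound $C\int_R^{\delta/\sqrt{\mu_\alpha}}\rho^{5-n}\,d\rho+o(1)\le CR^{6-n}+o(1)$; this is where the hypothesis $n\ge7$ intervenes, through the convergence at infinity of $\int\rho^{5-n}\,d\rho$.

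For the balls $B_{x_{i,\alpha}}(R\mu_\alpha)$ I would argue one bubble at a time. If $x_{i,\alpha}$ escapes to infinity in the rescaled picture with $\mu_{i,\alpha}/\mu_\alpha\to\ell_i>0$, then rescaling at $x_{i,\alpha}$ at its own scale and invoking Lemma \ref{Lem2} there shows it contributes an extra $\frac{\ell_i^2}{n}\bigl(\int_{\mathbb R^n}|\nabla B|^2\,dx\bigr)\hbox{Tr}_g(A)(x_\infty)$ by the same radial computation. If instead $\mu_{i,\alpha}=o(\mu_\alpha)$, its contribution is $O(\mu_{i,\alpha}^2)=o(\mu_\alpha^2)$: on the core $B_{x_{i,\alpha}}(R'\mu_{i,\alpha})$ this is again the rescaling at scale $\mu_{i,\alpha}$, and on the neck $\mu_{i,\alpha}\ll d_g(x_{i,\alpha},\cdot)\ll\mu_\alpha$ the structure equation \eqref{EqtBlowUp3} forces the transition to the ambient bubble at the scale $\sqrt{\mu_{i,\alpha}\mu_\alpha}$, which repairs the over-counting of the bare estimate of Proposition \ref{SharpPtEst} and keeps the neck energy of order $\mu_{i,\alpha}^2$. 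Summing over the finitely many pieces yields \eqref{TraceEqtPropEqt1} with $\beta=\frac{1}{n}\bigl(\int_{\mathbb R^n}|\nabla B|^2\,dx\bigr)\bigl(1+\sum_i\ell_i^2\bigr)>0$ — the positivity being forced already by the core bubble, and $\beta$ being independent of $A$ since $A$ enters only through $A_{ij}(x_\infty)\delta_{ij}=\hbox{Tr}_g(A)(x_\infty)$. Taking $A=g$ here is precisely the a priori energy bound used in the first paragraph.

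The statement \eqref{TraceEqtPropEqt2} when $u_\infty\equiv0$ follows by the identical scheme on the fixed ball $B_{x_\alpha}(\delta)$, the outer region being now $R\mu_\alpha<r_\alpha<\delta$ together with $\{r_\alpha\simeq\delta\}$, on which $u_\alpha\to u_\infty\equiv0$ and Proposition \ref{SharpPtEst} gives $|\nabla u_\alpha|\le C\mu_\alpha^{\frac{n-4}{2}}\delta^{3-n}$, hence a contribution $O(\mu_\alpha^{n-4})=o(\mu_\alpha^2)$ for $n\ge7$. The main obstacle throughout is the book-keeping of the outer and neck regions, i.e. the energy carried by secondary bubbles of smaller scale, for which the raw pointwise bounds of Proposition \ref{SharpPtEst} are not sharp: it is here that the structure equation \eqref{EqtBlowUp3}, the choices of $\delta$ (no extraneous centre in $B_{x_\alpha}(2\delta\sqrt{\mu_\alpha})$, resp. $B_{x_\alpha}(2\delta)$) coming from \eqref{DefSubsetsInd}, and the dimensional restriction $n\ge7$ are all essential.
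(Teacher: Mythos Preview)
Your overall strategy coincides with the paper's: rescale, use Lemma~\ref{Lem2} on core annuli to extract the limit $\frac{1}{n}\hbox{Tr}_g(A)(x_\infty)\int_{\mathbb R^n}|\nabla B|^2dx$ via the radiality of $B$, and use Proposition~\ref{SharpPtEst} together with $n\ge7$ on the far exterior. The computation of $\beta$ and the argument for \eqref{TraceEqtPropEqt2} are essentially the same.

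There is, however, a genuine gap in your treatment of the secondary bubbles with $\mu_{i,\alpha}=o(\mu_\alpha)$. On the neck $R'\mu_{i,\alpha}<d_g(x_{i,\alpha},\cdot)<\varepsilon\mu_\alpha$, Proposition~\ref{SharpPtEst} only yields $|\nabla u_\alpha|\le C\mu_\alpha^{(n-4)/2}r_\alpha^{3-n}$ with the \emph{maximal} scale $\mu_\alpha$, not $\mu_{i,\alpha}$; integrating gives a bound of order $(\mu_\alpha/\mu_{i,\alpha})^{n-6}\mu_\alpha^2$, which blows up. You flag this (``the raw pointwise bounds \dots are not sharp'') and invoke a ``transition at scale $\sqrt{\mu_{i,\alpha}\mu_\alpha}$'' forced by \eqref{EqtBlowUp3}, but the structure equation is a statement about the mutual separation of the centres and scales, not a pointwise bound on $u_\alpha$; it does not by itself produce the neck estimate you claim. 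This step, as written, is a heuristic, not a proof.

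The paper closes this gap differently: it does \emph{not} rely on Proposition~\ref{SharpPtEst} alone near the smaller bubbles, but goes back to the $H^2$-decomposition \eqref{EqtBlowUp1} and the embedding $H^2\subset H^{1,2^\star}$. On any domain $\Omega_\alpha$ one writes $\nabla u_\alpha=\nabla u_\infty+\sum_j\nabla B_\alpha^j+\nabla\mathcal R_\alpha$; H\"older gives $\int_{\Omega_\alpha}|\nabla\mathcal R_\alpha|^2\le\|\nabla\mathcal R_\alpha\|_{2^\star}^2\,\hbox{Vol}_g(\Omega_\alpha)^{2/n}=o(\hbox{Vol}_g(\Omega_\alpha)^{2/n})$, while each $\int_{\Omega_\alpha}|\nabla B_\alpha^j|^2$ outside its own annulus is $\varepsilon_R\mu_\alpha^2$ by direct computation on the explicit profiles. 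This is the content of the first estimate in \eqref{ExteriorEstClaim}, and it is precisely what controls the energy on the $\varepsilon$-holes and the balls of radius $R\mu_\alpha$ that your pointwise argument cannot reach. If you insert this device in place of your transition-scale paragraph, your proof goes through.
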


\begin{proof}[Proof of Proposition \ref{TraceEst}]  Let $I^\prime$ be the subset of $\bigl\{1,\dots,k\bigr\}$ consisting of the $i$'s such that $\mu_\alpha = O(\mu_{i,\alpha})$ and, 
for $i$ given, let $\hat I_i$ be the subset of $\bigl\{1,\dots,k\bigr\}$ 
consisting of the $j$'s such that $d_g(x_{i,\alpha},x_{j,\alpha}) = O\left(\mu_{i,\alpha}\right)$. Given $R > 0$ we define $A_{i,\alpha,R}$ to be the annuli type sets
$$A_{i,\alpha,R} = B_{x_{i,\alpha}(R\mu_{i,\alpha})}\backslash \bigcup_{j \in \hat I_i}B_{x_{j,\alpha}}\left(\frac{1}{R}\mu_{i,\alpha}\right)
\hskip.1cm .$$
We claim that for any sequences $(\Omega_\alpha)_\alpha$ of domains in $M$,
\begin{equation}\label{ExteriorEstClaim}
\begin{split}
&\int_{\Omega_\alpha\backslash\bigcup_{i \in I^\prime}A_{i,\alpha,R}}\vert\nabla u_\alpha\vert^2dv_g \le 2\int_{\Omega_\alpha}\vert\nabla u_\infty\vert^2dv_g 
+ o\left(\hbox{Vol}_g(\Omega_\alpha)^{\frac{2}{n}}\right) + \varepsilon_R\mu_\alpha^2\hskip.1cm ,\\
&\int_{\Omega_\alpha\backslash\bigcup_{i=1}^kB_{x_{i,\alpha}}(R\mu_\alpha)}\vert\nabla u_\alpha\vert^2dv_g \le O\left(\hbox{Vol}_g(\Omega_\alpha)\right) 
+ \varepsilon_R\mu_\alpha^2
\end{split}
\end{equation}
for all $\alpha$, where $\varepsilon_R \to 0$ as $R \to +\infty$. First we prove \eqref{ExteriorEstClaim}, then we prove 
\eqref{TraceEqtPropEqt1} and at last we prove \eqref{TraceEqtPropEqt2}.

\medskip (1) {\it Proof of the first estimate in \eqref{ExteriorEstClaim}}. We use the Sobolev 
decomposition \eqref{EqtBlowUp1}. Thanks to \eqref{EqtBlowUp1}, by the Sobolev embedding $H^2 \subset H^{1,2^\star}$, where $2^\star = \frac{2n}{n-2}$, by H\"oder's inequality, 
and since $n \ge 7$ so that there holds $\mu_\alpha^{n-4} = o(\mu_\alpha^2)$, we can write that
\begin{equation}\label{SectTraceEqt1}
\begin{split}
&\int_{\Omega_\alpha\backslash\bigcup_{i \in I^\prime}A_{i,\alpha,R}}\vert\nabla u_\alpha\vert^2dv_g 
\le 2\int_{\Omega_\alpha}\vert\nabla u_\infty\vert^2dv_g\\
&\hskip.4cm + \sum_{j=1}^k\int_{\Omega_\alpha\backslash\bigcup_{i \in I^\prime}A_{i,\alpha,R}}\vert\nabla B_\alpha^j\vert^2dv_g
+ o\left(\hbox{Vol}_g(\Omega_\alpha)^{\frac{2}{n}}\right)\hskip.1cm .
\end{split}
\end{equation}
Independently, for any $j$,
\begin{equation}\label{SectTraceEqt2}
\int_{\Omega_\alpha\backslash\bigcup_{i \in I^\prime}A_{i,\alpha,R}}\vert\nabla B_\alpha^j\vert^2dv_g
\le C\mu_{j,\alpha}^2\int_{\mathbb{R}^n\backslash \frac{1}{\mu_{j,\alpha}}K_\alpha}\frac{\vert x\vert^2}{(1 + \vert x\vert^2)^{n-2}}dx + o(\mu_\alpha^2)
\hskip.1cm ,
\end{equation}
where $K_\alpha = \exp_{x_{j,\alpha}}^{-1}\left(\bigcup_{i \in I^\prime}A_{i,\alpha,R}\right)$. In case $j \not\in I^\prime$, then $\mu_{j,\alpha} = o(\mu_\alpha^2)$ and 
\begin{equation}\label{SectTraceEqt3}
\int_{\Omega_\alpha\backslash\bigcup_{i \in I^\prime}A_{i,\alpha,R}}\vert\nabla B_\alpha^j\vert^2dv_g
= o(\mu_\alpha^2)\hskip.1cm .
\end{equation}
In case $j \in I^\prime$, then 
\begin{equation}\label{SectTraceEqt4}
\begin{split}
&\int_{\Omega_\alpha\backslash\bigcup_{i \in I^\prime}A_{i,\alpha,R}}\vert\nabla B_\alpha^j\vert^2dv_g\\
&\le C\mu_{j,\alpha}^2\int_{\mathbb{R}^n\backslash \frac{1}{\mu_{j,\alpha}}\exp_{x_{j,\alpha}}^{-1}(A_{j,\alpha,R})}\frac{\vert x\vert^2}{(1 + \vert x\vert^2)^{n-2}}dx + o(\mu_\alpha^2)\\
&\le C\mu_{j,\alpha}^2\int_{\mathbb{R}^n\backslash K_R}\frac{\vert x\vert^2}{(1 + \vert x\vert^2)^{n-2}}dx + o(\mu_\alpha^2)
\end{split}
\end{equation}
where $K_R = B_0(R)\backslash\bigcup_{i\in \hat I_j}B_{y_i}(\frac{2}{R})$ and 
$y_i $ is the limit of the $\frac{1}{\mu_{j,\alpha}}\exp_{x_{j,\alpha}}^{-1}(x_{i,\alpha})$'s. The first estimate in 
\eqref{ExteriorEstClaim} clearly follows from \eqref{SectTraceEqt1}--\eqref{SectTraceEqt4}. 

\medskip (2) {\it Proof of the second estimate in \eqref{ExteriorEstClaim}}. Here we use 
Proposition \ref{SharpPtEst}. By \eqref{EqtMnPrp} we can write that 
\begin{equation}\label{SectTraceEqt5}
\begin{split}
&\int_{\Omega_\alpha\backslash\bigcup_{i=1}^kB_{x_{i,\alpha}}(R\mu_\alpha)}\vert\nabla u_\alpha\vert^2dv_g\\
&\le C\int_{\Omega_\alpha\backslash\bigcup_{i=1}^kB_{x_{i,\alpha}}(R\mu_\alpha)}\left(1+\mu_\alpha^{n-4}r_\alpha^{6-2n}\right)dv_g\\
&\le C\hbox{Vol}_g(\Omega_\alpha) + 
C\mu_\alpha^{n-4}\sum_{j=1}^k\int_{\Omega_\alpha\backslash B_{x_{j,\alpha}}(R\mu_\alpha)}d_g(x_{j,\alpha},x)^{6-2n}dv_g
\hskip.1cm ,
\end{split}
\end{equation}
and there holds that
\begin{equation}\label{SectTraceEqt6}
\int_{\Omega_\alpha\backslash B_{x_{j,\alpha}}(R\mu_\alpha)}d_g(x_{j,\alpha},x)^{6-2n}dv_g
\le C_1 + C_2\mu_\alpha^{6-n}\int_{\mathbb{R}^n\backslash B_0(R)}\vert x\vert^{6-2n}dx\hskip.1cm .
\end{equation}
Since $n \ge 7$, the second estimate in \eqref{ExteriorEstClaim} follows from \eqref{SectTraceEqt5} and \eqref{SectTraceEqt6}. This proves 
\eqref{ExteriorEstClaim}. 

\medskip (3) {\it Proof of \eqref{TraceEqtPropEqt1}}. Let $K^1_\alpha = \bigcup_{i\in I^\prime}A_{i,\alpha,R}$ and 
$K^2_\alpha = \bigcup_{i=1}^k B_{x_{i,\alpha}}(R\mu_\alpha)$. By the structure equation, \eqref{EqtBlowUp3}, $A_{i,\alpha,R}\bigcap A_{j,\alpha,R} = \emptyset$ 
for all $i \not= j$ in $I^\prime$. We start by writing that
\begin{equation}\label{TraceEstPfEqt1}
\begin{split}
&\int_{B_{x_\alpha}(\delta\sqrt{\mu_\alpha})}A(\nabla u_\alpha,\nabla u_\alpha)dv_g\\
&= \int_{B_{x_\alpha}(\delta\sqrt{\mu_\alpha})\bigcap K^1_\alpha}A(\nabla u_\alpha,\nabla u_\alpha)dv_g 
+ \int_{B_{x_\alpha}(\delta\sqrt{\mu_\alpha})\backslash K^1_\alpha}A(\nabla u_\alpha,\nabla u_\alpha)dv_g\\
&= \sum_{i \in J}\int_{A_{i,\alpha,R}}A(\nabla u_\alpha,\nabla u_\alpha)dv_g 
+ \int_{B_{x_\alpha}(\delta\sqrt{\mu_\alpha})\backslash K^1_\alpha}A(\nabla u_\alpha,\nabla u_\alpha)dv_g\hskip.1cm ,
\end{split}
\end{equation}
where $J = I^\prime\bigcap\tilde I_2$ and $\tilde I_2$ is as in \eqref{DefSubsetsInd}, and that
\begin{equation}\label{TraceEstPfEqt2}
\begin{split}
&\left\vert\int_{B_{x_\alpha}(\delta\sqrt{\mu_\alpha})\backslash K^1_\alpha}A(\nabla u_\alpha,\nabla u_\alpha)dv_g\right\vert\\
&\le \left\vert\int_{B_{x_\alpha}(\delta\sqrt{\mu_\alpha})\backslash K^2_\alpha}A(\nabla u_\alpha,\nabla u_\alpha)dv_g\right\vert 
+ \left\vert\int_{K^2_\alpha\backslash K^1_\alpha}A(\nabla u_\alpha,\nabla u_\alpha)dv_g\right\vert\hskip.1cm .
\end{split}
\end{equation}
By \eqref{ExteriorEstClaim},
\begin{equation}\label{TraceEstPfEqt3}
\begin{split}
&\left\vert\int_{B_{x_\alpha}(\delta\sqrt{\mu_\alpha})\backslash K^2_\alpha}A(\nabla u_\alpha,\nabla u_\alpha)dv_g\right\vert 
\le o(\mu_\alpha^2) + C\varepsilon_R\mu_\alpha^2
\hskip.1cm ,\\
&\left\vert\int_{K^2_\alpha\backslash K^1_\alpha}A(\nabla u_\alpha,\nabla u_\alpha)dv_g\right\vert 
\le o(\mu_\alpha^2) + C\varepsilon_R\mu_\alpha^2\hskip.1cm ,
\end{split}
\end{equation}
where $\varepsilon_R \to 0$ as $R\to +\infty$. Combining \eqref{TraceEstPfEqt1}--\eqref{TraceEstPfEqt3}, it follows that 
\begin{equation}\label{TraceEstPfEq4}
\begin{split}
\int_{B_{x_\alpha}(\delta\sqrt{\mu_\alpha})}A(\nabla u_\alpha,\nabla u_\alpha)dv_g
&= \sum_{i \in J}\int_{A_{i,\alpha,R}}A(\nabla u_\alpha,\nabla u_\alpha)dv_g\\
&+ o(\mu_\alpha^2) + \varepsilon_R(\alpha)\mu_\alpha^2\hskip.1cm ,
\end{split}
\end{equation}
where $\lim_{R\to +\infty}\lim_{\alpha\to +\infty}\varepsilon_R(\alpha) = 0$. We fix $i \in J$ and define $g_\alpha$ to be the metric in Euclidean 
space given by $g_\alpha(x) = \bigl(\exp_{x_{i,\alpha}}^\star g\bigr)(\mu_{i,\alpha}x)$. For $j \in \hat I_i$, we let also $a_{j,\alpha}$ be the point in $\mathbb{R}^n$ given by 
$a_{j,\alpha} = \mu_{i,\alpha}^{-1}\exp_{x_{i,\alpha}}^{-1}(x_{j,\alpha})$. Since $j \in \hat I_i$ there holds that, up to a subsequence, 
$a_{j,\alpha} \to a_j$ in $\mathbb{R}^n$. We define $\tilde u_{i,\alpha}$ to be the function defined in the Euclidean space by $\tilde u_\alpha = 
R_{x_{i,\alpha}}^{\mu_{i,\alpha}}u_\alpha$, where the $R_x^\mu$ action is as in \eqref{GroundStateShape}. In other words,
$$\tilde u_\alpha(x) = \mu_{i,\alpha}^{\frac{n-4}{2}}u_\alpha\left(\exp_{x_{i,\alpha}}(\mu_{i,\alpha}x)\right)\hskip.1cm .$$
Then
\begin{equation}\label{TraceEstPfEq5}
\int_{A_{i,\alpha,R}}A(\nabla u_\alpha,\nabla u_\alpha)dv_g = \mu_{i,\alpha}^2
\int_{B_0(R)\backslash W_\alpha}A_\alpha(\nabla\tilde u_\alpha,\nabla\tilde u_\alpha)dv_{g_\alpha}\hskip.1cm ,
\end{equation}
where $A_\alpha(x) = \bigl(\exp_{x_{i,\alpha}}^\star A\bigr)(\mu_{i,\alpha}x)$, 
$W_\alpha = \bigcup_{j \in \hat I_i}\tilde B_{a_{j,\alpha}}(\frac{1}{R})$, and $\tilde B_{a_{j,\alpha}}(\frac{1}{R})$ is the ball of center 
$a_{j,\alpha}$ and radius $1/R$ with respect to $g_\alpha$. Since $g_\alpha \to \xi$ in the $C^4$-topology, where $\xi$ is the Euclidean metric, 
it follows from Lemma \ref{Lem2} and \eqref{TraceEstPfEq5} that
\begin{equation}\label{TraceEstPfEq6}
\begin{split}
&\int_{A_{i,\alpha,R}}A(\nabla u_\alpha,\nabla u_\alpha)dv_g\\
&= \mu_{i,\alpha}^2 \int_{B_0(R)\backslash\bigcup_{i \in \hat I_i}B_{a_j}(\frac{1}{R})}\hat A_0(\nabla B,\nabla B)dx + o(\mu_{i,\alpha}^2)\\
&= \mu_{i,\alpha}^2 \int_{\mathbb{R}^n}\hat A_0(\nabla B,\nabla B)dx + o(\mu_{i,\alpha}^2) + \varepsilon_R\mu_\alpha^2\hskip.1cm ,
\end{split}
\end{equation}
where $\hat A_0 = \left(\exp_{x_\infty}^\star A\right)(0)$, $a_{j,\alpha} \to a_j$ 
as $\alpha \to +\infty$, $\varepsilon_R \to 0$ as $R \to +\infty$, and $B$ is as in \eqref{GroundStateShape0}. Since $B$ 
is radially symmetrical, we get from \eqref{TraceEstPfEq6} that
\begin{equation}\label{TraceEstPfEq7}
\begin{split}
&\int_{A_{i,\alpha,R}}A(\nabla u_\alpha,\nabla u_\alpha)dv_g\\
&= \mu_{i,\alpha}^2\frac{1}{n}\hbox{Tr}_g(A)(x_\infty)\int_{\mathbb{R}^n}\vert\nabla B\vert^2dx + o(\mu_{i,\alpha}^2) + \varepsilon_R\mu_\alpha^2\hskip.1cm ,
\end{split}
\end{equation}
and \eqref{TraceEqtPropEqt1} follows from \eqref{TraceEstPfEq4} and \eqref{TraceEstPfEq7} 
with $\beta = \left(\sum_{i\in J}\mu_i\right)\int_{\mathbb{R}^n}\vert\nabla B\vert^2dx$, where $\mu_i$ is the limit of $\frac{\mu_{i,\alpha}}{\mu_\alpha}$ as 
$\alpha \to +\infty$. Assuming $\mu_\alpha = \mu_{1,\alpha}$ for all $\alpha$, there holds that $1 \in J$ and $\beta > 0$. 
This ends the proof of  \eqref{TraceEqtPropEqt1}. 

\medskip (4) {\it Proof of \eqref{TraceEqtPropEqt2}}. We take advantage of $u_\infty \equiv 0$. 
By \eqref{EqtMnPrp} in Proposition \ref{SharpPtEst}, and since $n \ge 7$, we get that
\begin{equation}\label{TraceEstPfEq8}
\begin{split}
\int_{B_{x_\alpha}(\delta)\backslash K_\alpha^2}\vert\nabla u_\alpha\vert^2dv_g 
&\le C\mu_\alpha^{n-4}\int_{B_{x_\alpha}(\delta)\backslash K_\alpha^2}r_\alpha(x)^{6-2n}dv_g(x)\\
&\le C\mu_\alpha^{n-4}\sum_{i=1}^k\int_{B_{x_\alpha}(\delta)\backslash B_{x_{i,\alpha}}(R\mu_\alpha)}d_g(x_{i,\alpha},x_\alpha)^{6-2n}dv_g\\
&\le C\mu_\alpha^{n-4}\sum_{i=1}^k\left(1 + \mu_\alpha^{6-n}\int_{\mathbb{R}^n\backslash B_0(R)}\vert x\vert^{6-2n}dx\right)\\
&= o(\mu_\alpha^2) + \varepsilon_R\mu_\alpha^2\hskip.1cm ,
\end{split}
\end{equation}
where $\varepsilon_R \to 0$ as $R \to +\infty$. Then, we can write that
\begin{equation}\label{TraceEstPfEq9}
\begin{split}
&\int_{B_{x_\alpha}(\delta)}A(\nabla u_\alpha,\nabla u_\alpha)dv_g\\
&=  \sum_{i \in J^\prime}\int_{A_{i,\alpha,R}}A(\nabla u_\alpha,\nabla u_\alpha)dv_g 
+ \int_{B_{x_\alpha}(\delta)\backslash K^1_\alpha}A(\nabla u_\alpha,\nabla u_\alpha)dv_g
\hskip.1cm ,
\end{split}
\end{equation}
where $J^\prime = I^\prime\bigcap\tilde I_1$ and $\tilde I_1$ is as in \eqref{DefSubsetsInd}, while
\begin{equation}\label{TraceEstPfEq10}
\begin{split}
&\left\vert\int_{B_{x_\alpha}(\delta)\backslash K^1_\alpha}A(\nabla u_\alpha,\nabla u_\alpha)dv_g\right\vert\\
&\le \left\vert\int_{B_{x_\alpha}(\delta)\backslash K^2_\alpha}A(\nabla u_\alpha,\nabla u_\alpha)dv_g\right\vert
+ \left\vert\int_{K^2_\alpha\backslash K^1_\alpha}A(\nabla u_\alpha,\nabla u_\alpha)dv_g\right\vert
\hskip.1cm .
\end{split}
\end{equation}
By \eqref{ExteriorEstClaim} and \eqref{TraceEstPfEq8} we then get from 
\eqref{TraceEstPfEq9} and \eqref{TraceEstPfEq10} that
\begin{equation}\label{TraceEstPfEq11}
\int_{B_{x_\alpha}(\delta)}A(\nabla u_\alpha,\nabla u_\alpha)dv_g 
=  \sum_{i \in J^\prime}\int_{A_{i,\alpha,R}}A(\nabla u_\alpha,\nabla u_\alpha)dv_g + o(\mu_\alpha^2) 
+ \varepsilon_R\mu_\alpha^2\hskip.1cm ,
\end{equation}
where $\varepsilon_R \to 0$ as $R \to +\infty$, and \eqref{TraceEqtPropEqt2}
follow from \eqref{TraceEstPfEq7} and \eqref{TraceEstPfEq11}. This ends the proof of the proposition.
\end{proof}

\section{Proof of Theorem \ref{StabilityThm} when $n \ge 6$}\label{ProofTheorem2Part1}

We prove Theorem \ref{StabilityThm} by contradiction. We assume that $(M,g)$ is conformally flat of dimension $n \ge 6$. 
We let $(b_\alpha)_\alpha$ and $(c_\alpha)_\alpha$ be converging sequences 
of real numbers with limits $b$ and $c$ as $\alpha \to \infty$, and $(u_\alpha)_\alpha$ be a bounded sequence in $H^2$ of 
positive nontrivial solutions of \eqref{PertCritEqt} satisfying \eqref{BlowUpAssumpt}. We split the proof in the two cases 
$n = 6,7$ and $n \ge 8$.

\medskip First we assume $n = 6, 7$. By Theorem \ref{LimitProfileThm} we know that $u_\infty \equiv 0$ in 
\eqref{EqtBlowUp1}. Let $\mathcal{S} = \left\{x_1,\dots,x_N\right\}$ be the geometric blow-up set consisting 
of the limits of the $x_{i,\alpha}$'s, where the $x_{i,\alpha}$'s are as in Section \ref{PtEst}. Let $x_\alpha$ and $\mu_\alpha$ be as in 
Sections \ref{PtEst} and \ref{ProofThm1}, $\mu_\alpha$ being 
as in \eqref{DefMuAlpha}. We may assume $x_\alpha = x_{1,\alpha}$ for all $\alpha$. 
Given $x_i \in \mathcal{S}$, since $g$ is conformally flat, there exists (up to the assimilation of $x_i$ with $0$) a 
smooth positive function $\varphi > 0$ in a neighborhood $U$ of $x_i$ such that $\varphi^{4/(n-4)}\xi = g$ in $U = B_0(\delta_0)$, 
where $\xi$ is the Euclidean metric. We may also assume $U\cap\mathcal{S} =Ê\{x_i\}$. 
We define $\hat u_\alpha = \varphi u_\alpha$ and apply the Pohozaev identity \eqref{PohTypeIdentity} to $\eta_\delta\hat u_\alpha$ in $B_0(\delta)$ for 
$\delta \in (0,\delta_0)$, where $\eta_\delta(x) = \eta(\frac{2}{\delta}x)$ and $\eta$ is such that $\eta \equiv 1$ in $B_0(1)$ 
and $\eta \equiv 0$ in $\mathbb{R}^n\backslash B_0(4/3)$. By Hebey, Robert and Wen \cite{HebRobWen}, there holds that
\begin{equation}\label{ConcEstRelat}
\int_{B_0(\delta)\backslash B_0(\delta/2)}\vert\nabla^k\hat u_\alpha\vert^2dx = o(1)\int_M\vert\nabla u_\alpha\vert^2dx
\end{equation}
for all $k = 0,1,2$, where $o(1) \to 0$ as $\alpha \to +\infty$, and there also holds since $u_\infty \equiv 0$ that
\begin{equation}\label{ConcEstRelat2}
\frac{\int_{\mathcal{B}_\delta}\vert\nabla u_\alpha\vert^2dv_g}{\int_M\vert\nabla u_\alpha\vert^2dv_g} \to 1
\end{equation}
as $\alpha \to +\infty$, where $\mathcal{B}_\delta = \bigcup_{i=1}^NB_{x_i}(\delta)$. These estimates may be proved 
directly from Proposition \ref{SharpPtEst}. 
By \eqref{EuclEqt} and \eqref{ConcEstRelat}, following the computations in Hebey, Robert and Wen 
\cite{HebRobWen}, we get from the Pohozaev identity that
\begin{equation}\label{PohozConcl}
\left\vert\int_{\mathbb{R}^n}\eta^2\varphi^{\frac{8}{n-4}}\left(A_g-b_\alpha g\right)(\nabla\hat u_\alpha,\nabla\hat u_\alpha)dx\right\vert
\le C\left(\varepsilon_\delta + o(1)\right)\int_M\vert\nabla u_\alpha\vert^2dv_g\hskip.1cm ,
\end{equation}
where $C > 0$ is independent of $\alpha$ and $\delta$, $A_g$ is as in \eqref{DefAg}, and 
$\varepsilon_\delta$ can be made independent of $\alpha$ and such that 
$\varepsilon_\delta \to 0$ as $\delta \to 0$. 
When $b \not\in \mathcal{S}_w$, $A_g-b_\alpha g$ has a sign for $\alpha \gg 1$ sufficiently large. 
In particular, coming back to $M$, summing over $i = 1,\dots, N$, it follows from \eqref{ConcEstRelat}, \eqref{ConcEstRelat2} and 
\eqref{PohozConcl} that
\begin{equation}\label{PohozConcl2}
\int_M\vert\nabla u_\alpha\vert^2dv_g
\le C\varepsilon_\delta\int_M\vert\nabla u_\alpha\vert^2dv_g + o\left(\int_M\vert\nabla u_\alpha\vert^2dv_g\right)\hskip.1cm ,
\end{equation}
and we get a contradiction since $\varepsilon_\delta \to 0$ as $\delta \to 0$. This proves Theorem \ref{StabilityThm} when $n = 6$. When $n = 7$, we consider 
\eqref{PohozConcl} around $x_\alpha$, namely for $i = 1$. By \eqref{Eqt2PfThm1}, \eqref{ConcEstRelat2}, and Proposition \ref{SharpPtEst},
\begin{equation}\label{L2mu2Control}
\int_M\vert\nabla u_\alpha\vert^2dv_g = O(\mu_\alpha^2)\hskip.1cm .
\end{equation}
By \eqref{TraceEqtPropEqt2} in Proposition \ref{TraceEst}, \eqref{ConcEstRelat}, and \eqref{L2mu2Control}, we then get by letting $\alpha \to +\infty$ and $\delta \to 0$ 
in \eqref{PohozConcl} that 
$\frac{1}{n}\hbox{Tr}_g(A_g)(x_\infty) = b$, 
where $x_\infty$ is the limit of the $x_\alpha$'s. This proves Theorem \ref{StabilityThm} when $n = 7$.

\medskip Now we assume $n \ge 8$. Let $x_\alpha$ and $\mu_\alpha$ be as above. By \eqref{SharpPtEst} and Proposition \ref{TraceEst},
\begin{equation}\label{EstGradL2}
\int_{B_{x_\alpha}(\delta\sqrt{\mu_\alpha})}\vert\nabla u_\alpha\vert^2dv_g = O(\mu_\alpha^2)
\end{equation}
for all $\delta > 0$. Applying Lemma \ref{LemPrf1} and Proposition \ref{TraceEst}, it follows that
\begin{equation}\label{ConcEqtSec4Eqt1}
n\left(\frac{1}{n}\hbox{Tr}_g(A_g)(x_\infty) - b + o(1)\right)\mu_\alpha^2 = -\frac{1}{\beta}\left(K(u_\infty) + o(1)\right)\mu_\alpha^{\frac{n-4}{2}}
\hskip.1cm ,
\end{equation}
where $K(u_\infty)$ is as in Lemma \ref{LemPrf1}, $\beta > 0$ is as in Proposition \ref{TraceEst}, and $x_\alpha \to x_\infty$ as $\alpha \to +\infty$. Assuming that $n \ge 9$ and 
$b \not= \frac{1}{n}\hbox{Tr}_g(A_g)$ in $M$, the contradiction directly follows from \eqref{ConcEqtSec4Eqt1} since, in that case, 
$\mu_\alpha^{(n-4)/2} = o(\mu_\alpha^2)$. This proves Theorem \ref{StabilityThm} when $n \ge 9$. 
In case $n = 8$ we have that $\mu_\alpha^{(n-4)/2} = \mu_\alpha^2$, and if we assume that 
$b < \frac{1}{n}\hbox{Tr}_g(A_g)$ in $M$, then, again, we directly get a contradiction thanks to \eqref{ConcEqtSec4Eqt1} using 
the signs of the two terms in \eqref{ConcEqtSec4Eqt1}. This ends the proof of Theorem \ref{StabilityThm}.

\section{Proof of Theorem \ref{StabilityThm} when $n = 5$}\label{ProofTheorem2Part2}

We prove Theorem \ref{LimitProfileThm} in the $5$-dimensional case by contradiction. We assume that $(M,g)$ is conformally flat of dimension $n = 5$. 
We let $(b_\alpha)_\alpha$ and $(c_\alpha)_\alpha$ be converging sequences 
of real numbers with limits $b$ and $c$ as $\alpha \to \infty$, and $(u_\alpha)_\alpha$ be a bounded sequence in $H^2$ of 
positive nontrivial solutions of \eqref{PertCritEqt} satisfying \eqref{BlowUpAssumpt}. By Theorem \ref{LimitProfileThm} we know that $u_\infty \equiv 0$. We let 
$\mathcal{S}$ be the geometric blow-up set consisting of the limits of the $x_{i,\alpha}$'s as $\alpha \to +\infty$:
$\mathcal{S} = \bigl\{x_1,\dots,x_N\bigr\}$, 
where $N \le k$. In the case of clusters, $N < k$. We prove in what follows that there exist $\lambda_1,\dots,\lambda_N \ge 0$ 
such that $\sum_{i=1}^N\lambda_i = 1$ and such that
\begin{equation}\label{MainEqt5dim}
\lambda_i^2\mu_{x_i}(x_i) + \sum_{j\not= i}\lambda_i\lambda_jG(x_i,x_j) = 0
\end{equation}
for all $i = 1,\dots,N$, where $G$ is the Green's function of $\Delta_g^2+b\Delta_g+c$ and $\mu_x$ is 
its regular part as in \eqref{GreenFct}. When $c < b^2/4$, which is assumed here, $G$ is given by
$$G(x,y) = \int_MG_1(x,z)G_2(z,y)dv_g(z)\hskip.1cm ,$$
where $G_1$ (respectively $G_2$) is the Green's function of the second order Schr\"odinger operator $\Delta_g + d_1$ 
(respectively $\Delta_g + d_2$), and $d_1$, $d_2$ are as in \eqref{SplittCsts} with 
$b$ and $c$ in place of $b_\alpha$ and $c_\alpha$. Hence, $G > 0$ and Theorem \ref{StabilityThm} when $n = 5$ follows 
from \eqref{MainEqt5dim}. Note that \eqref{MainEqt5dim} reduces to $\lambda_i^2\mu_{x_i}(x_i) = 0$ in case $N = 1$, so that 
the positivity of the mass is required, in particular in the case of clusters. 

\medskip We prove \eqref{MainEqt5dim} in the sequel. By Theorem \ref{LimitProfileThm} 
and Proposition \ref{SharpPtEst}, splitting $M$ into the two 
subsets $\{r_\alpha \le R\mu_\alpha\}$ and $\{r_\alpha \ge R\mu_\alpha\}$, we easily get that there exists $C >0$ such that, up to 
a subsequence, $\int_Mu_\alpha^{2^\sharp-1}dv_g \le C\mu_\alpha^{1/2}$ for all $\alpha$. By Lemma \ref{Lem2} we then easily get that there 
exists $c > 0$ such that, up to a subsequence,
\begin{equation}\label{LNormSec5Eqt1}
\int_Mu_\alpha^{2^\sharp-1}dv_g = \bigl(c+o(1)\bigr)\mu_\alpha^{\frac{1}{2}}\hskip.1cm .
\end{equation}
Again by Theorem \ref{LimitProfileThm} and Proposition \ref{SharpPtEst}, thanks also to 
\eqref{LNormSec5Eqt1}, we get that for any compact subset $\Omega$ of $M\backslash\mathcal{S}$, 
\begin{equation}\label{LNormSec5Eqt2}
\frac{\int_\Omega u_\alpha^{2^\sharp-1}dv_g}{\int_Mu_\alpha^{2^\sharp-1}dv_g} = o(1)\hskip.1cm .
\end{equation}
In what follows we let $\delta_0 = \inf_{i\not= j}d_g(x_i,x_j)$. For $i = 1,\dots,N$, and $\delta \in (0,\delta_0)$, we define
\begin{equation}\label{DefLambdaiSec5}
\lambda_i = \lim_{\alpha\to +\infty}\frac{\int_{B_{x_i}(\delta)}u_\alpha^{2^\sharp-1}dv_g}{\int_Mu_\alpha^{2^\sharp-1}dv_g}\hskip.1cm .
\end{equation}
It follows from \eqref{LNormSec5Eqt2} that $\lambda_i$ does not depend on $\delta$ and that $\sum_i\lambda_i = 1$. Let $\tilde u_\alpha$ be 
given by
\begin{equation}\label{DefTildeuAlphaSec5}
\tilde u_\alpha = \frac{u_\alpha}{\int_Mu_\alpha^{2^\sharp-1}dv_g}\hskip.1cm .
\end{equation}
By \eqref{PertCritEqt} and \eqref{LNormSec5Eqt1} there holds that
$$\Delta_g^2\tilde u_\alpha + b_\alpha\Delta_g\tilde u_\alpha + c_\alpha\tilde u_\alpha = 
\tilde\mu_\alpha^4 \tilde u_\alpha^{2^\sharp-1}\hskip.1cm ,$$
where $\tilde\mu_\alpha = O(\mu_\alpha)$. By Proposition \ref{SharpPtEst} and \eqref{LNormSec5Eqt1} there also holds that for any 
compact subset $\Omega \subset M\backslash\mathcal{S}$ there exists $C_\Omega > 0$ such that $\tilde u_\alpha \le C_\Omega$ 
in $\Omega$. Then, by standard elliptic theory, there exists $\tilde u \in C^4(M\backslash\mathcal{S})$ such that $\tilde u_\alpha \to \tilde u$ 
in $C^4_{loc}(M\backslash\mathcal{S})$ as $\alpha \to +\infty$. By Green's representation formula and the estimates in 
\eqref{Green'sEstimate1st}, we get that $\tilde u$ expresses as the sum of the $\lambda_iG_{x_i}$'s, where $G_{x_i} = G(x_i,\cdot)$. Summarizing,
up to a subsequence,
\begin{equation}\label{ConTildeuAlphaSec5}
\tilde u_\alpha \to \sum_{i=1}^N\lambda_iG_{x_i}
\end{equation}
in $C^4_{loc}(M\backslash\mathcal{S})$ as $\alpha \to +\infty$, where the $\lambda_i$'s are as in \eqref{DefLambdaiSec5} 
and $\tilde u_\alpha$ is given by \eqref{DefTildeuAlphaSec5}.

\medskip Now we fix $i \in \bigl\{1,\dots,N\bigr\}$. Since $g$ is conformally flat, there exists (up to the assimilation of $x_i$ with $0$) a 
smooth positive function $\varphi > 0$ in a neighborhood $U$ of $x_i$ such that $\varphi^{4/(n-4)}\xi = g$ in $U = B_0(\delta_0)$, 
where $\xi$ is the Euclidean metric. We may also assume $U\cap\mathcal{S} =Ê\{x_i\}$. Define $\hat u_\alpha = \varphi u_\alpha$. Basic 
Riemannian estimates, going back to the equation for geodesics, yield
\begin{equation}\label{DistEqt}
d_g(0,x) = \vert x\vert \varphi(0)^{\frac{2}{n-4}} \left(1 + \frac{1}{n-4}\left(\frac{\nabla\varphi(0)}{\varphi(0)},x\right) + O(\vert x\vert^2)\right)\hskip.1cm ,
\end{equation}
where $(\cdot,\cdot)$ is the Euclidean scalar product. 
It follows from \eqref{GreenFct}, \eqref{ConTildeuAlphaSec5} and \eqref{DistEqt} that
\begin{equation}\label{EqtHiPrim}
\lim_{\alpha\to +\infty}\frac{\hat u_\alpha}{\int_Mu_\alpha^{2^\sharp-1}dv_g} = H_i
\end{equation}
in $C^4_{loc}(U\backslash\{0\})$ as $\alpha \to +\infty$, where
\begin{equation}\label{EqtHi}
H_i(x) = \frac{\lambda_i\varphi(x_i)^{-1}}{6\omega_4\vert x\vert} + \beta_i(x)
\end{equation}
in $U\backslash\{0\}$, $\beta_i \in C^{0,\theta}(U)$ for $0 < \theta < 1$, $\beta_i$ is smooth outside $0$, and
\begin{equation}\label{EqtBetai0}
\beta_i(0) = \left(\lambda_i\mu_{x_i}(x_i) + \sum_{j\not= i}\lambda_jG(x_i,x_j)\right)\varphi(x_i)
\hskip.1cm .
\end{equation}
By standard elliptic theory, following arguments as in Druet, Hebey and V\'etois \cite{DruHebVet}, there also holds that
\begin{equation}\label{ControlEqtBetai}
\lim_{r \to 0}\sup_{\vert x\vert = r}\sum_{k=1}^3\vert x\vert^k\vert\nabla^k\beta_i(x)\vert = 0\hskip.1cm .
\end{equation}
In order to prove \eqref{ControlEqtBetai} in our context we first note that by \eqref{EuclEqt}, $\beta_i$ satisfies an equation like 
\begin{equation}\label{EqtPDEBetai}
\Delta^2\beta_i + A^{kl}\partial^2_{kl}\beta_i + B^k\partial_k\beta_i + D\beta_i = f_i
\end{equation}
in $U\backslash\{0\}$, where the coefficients $A^{kl}$, $B^k$ and $D$ are smooth, and where 
$f_i$ is such that $\vert f_i(x)\vert \le C\vert x\vert^{-3}$ in $U\backslash\{0\}$. First, keeping in mind that we aim at  
proving \eqref{ControlEqtBetai}, we claim that there exists $C > 0$ such that
\begin{equation}\label{ControlEqtBetaiS1}
\sum_{k=1}^3\vert x\vert^k\vert\nabla^k\beta_i(x)\vert \le C
\end{equation}
in $U\backslash\{0\}$. We argue by contradiction. Suppose that there exists $(x_m)_m$ in $U\backslash\{0\}$ 
such that $\sum_{k=1}^3\vert x_m\vert^k\vert\nabla^k\beta_i(x_m)\vert \to +\infty$ as $m \to +\infty$. Since $\beta_i$ is smooth 
in $U\backslash\{0\}$, there holds that $x_m \to 0$ as $m\to +\infty$. Let $\beta_{i,m}(x) = \beta_i(\vert x_m\vert x)$. 
By \eqref{EqtPDEBetai}, thanks to standard elliptic theory, there exists $\beta \in C^4(\mathbb{R}^n\backslash\{0\})$ 
such that $\beta_{i,m}\to\beta$ in $C^3_{loc}(\mathbb{R}^n\backslash\{0\})$ as $m \to +\infty$ and $\Delta^2\beta = 0$ 
in $\mathbb{R}^n\backslash\{0\}$. We have that $\vert\beta\vert \le C$ in $\mathbb{R}^n\backslash\{0\}$ since 
$\beta_i \in C^{0,\theta}(U)$. Then
\begin{equation}\label{ComputaARgBetai}
\sum_{k=1}^3\vert x_m\vert^k\left\vert\nabla^k\beta_i(x_m)\right\vert = 
\sum_{k=1}^3\left\vert\nabla^k\beta_{i,m}\left(\frac{x_m}{\vert x_m\vert}\right)\right\vert \to \sum_{k=1}^3\left\vert\nabla^k\beta(y)\right\vert\hskip.1cm ,
\end{equation}
where $y$ is the limit of the points $\frac{x_m}{\vert x_m\vert}$ as $m \to +\infty$. A contradiction, and this proves 
\eqref{ControlEqtBetaiS1}. Now we prove \eqref{ControlEqtBetai}. Here again we argue by contradiction. We assume there exists 
$(x_m)_m$ in $U\backslash\{0\}$ such that
\begin{equation}\label{ControlEqtBetaiS1bis}
\sum_{k=1}^3\vert x_m\vert^k\vert\nabla^k\beta_i(x_m)\vert \ge C
\end{equation}
for all $m$ and some $C > 0$, and such that $x_m \to 0$ as $m \to +\infty$. We define $\beta_{i,m}$ as above. Then we get the 
existence of $\beta \in C^4(\mathbb{R}^n\backslash\{0\}$ such that $\beta_{i,m}\to\beta$ in $C^3_{loc}(\mathbb{R}^n\backslash\{0\})$ 
as $m \to +\infty$ and $\Delta^2\beta = 0$ 
in $\mathbb{R}^n\backslash\{0\}$. By \eqref{ControlEqtBetaiS1}, there holds that $\Delta^2\beta = 0$ in $\mathbb{R}^n$ in the sense of distributions 
and not only outside $0$. Then $\beta$ is smooth and, necessarily, see Adimurthi, Robert and Struwe \cite{AdiRobStr}, we get that 
$\beta \equiv C^{st}$ is a constant. Coming back to \eqref{ComputaARgBetai}, we get a contradiction with 
\eqref{ControlEqtBetaiS1bis}. This proves \eqref{ControlEqtBetai}. 

\medskip From now on, given $\delta \in (0,\delta_0)$, we define
\begin{equation}\label{EqtADelta}
\begin{split}
A_\delta &= -\frac{1}{2}
\int_{\partial B_0(\delta)}\left(H_i\frac{\partial\Delta H_i}{\partial\nu} - \frac{\partial H_i}{\partial\nu}\Delta H_i\right)d\sigma 
+ \frac{1}{2}\int_{\partial B_0(\delta)}(x,\nu)(\Delta H_i)^2d\sigma\\
&- \int_{\partial B_0(\delta)}(x,\nabla H_i)\frac{\partial\Delta H_i}{\partial\nu} d\sigma
+\int_{\partial B_0(\delta)}\frac{\partial(x,\nabla H_i)}{\partial\nu}\Delta H_id\sigma\hskip.1cm ,
\end{split}
\end{equation}
where $\nu$ is the unit outward normal to $\partial B_0(\delta)$ and $H_i$ is as in \eqref{EqtHiPrim}-\eqref{EqtHi}. By 
\eqref{EqtHi} and \eqref{ControlEqtBetai},
\begin{equation}\label{FirstEqtADelta}
\lim_{\delta\to 0}A_\delta = \frac{\lambda_i\varphi(x_i)^{-1}}{2}\beta_i(0)\hskip.1cm .
\end{equation}
Independently, applying the Pohozaev identity \eqref{PohTypeIdentity} to $\hat u_\alpha$ in $B_0(\delta)$, we get by 
\eqref{LNormSec5Eqt1} and \eqref{EqtHiPrim} that 
\begin{equation}\label{LastArg5dimEqt1}
\int_{B_0(\delta)}\left(x^k\partial_k\hat u_\alpha + \frac{1}{2}\hat u_\alpha\right)\Delta^2\hat u_\alpha dx = 
\left(cA_\delta + o(1)\right)\mu_\alpha\hskip.1cm .
\end{equation}
By Proposition \ref{SharpPtEst}, for any $k \in \left\{0,1,2\right\}$, 
\begin{equation}\label{EstRemTermsLastArg}
\int_{B_0(\delta)}\hat u_\alpha\vert\nabla^k\hat u_\alpha\vert dx \le \varepsilon_\delta(\alpha)\mu_\alpha
\hskip.2cm\hbox{and}\hskip.2cm 
\int_{B_0(\delta)}\vert\nabla\hat u_\alpha\vert^2dx \le \varepsilon_\delta(\alpha)\mu_\alpha
\hskip.1cm ,
\end{equation}
where $\lim_{\delta \to 0}\limsup_{\alpha\to +\infty}\varepsilon_\delta(\alpha) = 0$. By \eqref{EuclEqt} and \eqref{EstRemTermsLastArg}, 
integrating by parts, we get that
\begin{equation}\label{LastArg5dimEqt2}
\left\vert\int_{B_0(\delta)}\left(x^k\partial_k\hat u_\alpha + \frac{1}{2}\hat u_\alpha\right)\Delta^2\hat u_\alpha dx\right\vert 
\le \varepsilon_\delta(\alpha)\mu_\alpha\hskip.1cm ,
\end{equation}
where $\varepsilon_\delta(\alpha)$ is as above. 
Combining \eqref{LastArg5dimEqt1} and \eqref{LastArg5dimEqt2} it follows that $A_\delta \to 0$ as $\delta \to 0$. Coming back to \eqref{EqtBetai0} and 
\eqref{FirstEqtADelta}, this proves \eqref{MainEqt5dim}. As already mentioned, this also proves Theorem \ref{StabilityThm} when $n = 5$.

\medskip Theorem \ref{StabilityThm} has an interpretation in terms of phase stability of solitons 
for the fourth order Schr\"odinger equation
\begin{equation}\label{SchroEvol}
i\frac{\partial u}{\partial t} + \Delta_g^2u + \varepsilon\Delta_gu
= \vert u\vert^{2^\sharp-2}u
\hskip.1cm ,
\end{equation}
where $\varepsilon > 0$. Equations like \eqref{SchroEvol} have been introduced by 
Karpman \cite{Kar} and Karpman and Shagalov \cite{KarSha} 
to take into account the role of small fourth-order dispersion terms in the 
propagation of intense laser beams in a bulk 
medium with Kerr nonlinearity. Among other possible references they have been investigated since then 
(local well-posedness, global well-posedness, scattering) 
by Fibich, Ilan, and Papanicolaou \cite{FibIlaPap}, Guo and Wang \cite{GuoWan}, 
Hao, Hsiao, and Wang \cite{HaoHsiWan1,HaoHsiWan2}, Pausader \cite{Pau1,Pau2,Pau3}, 
Pausader and Shao \cite{PauSha}, and Segata \cite{Seg}. Solitons for 
\eqref{SchroEvol} can be written as $ue^{-i\omega t}$, where 
$u: M \to \mathbb{R}$ satisfies \eqref{CritEqt} with $b = \varepsilon$ and 
$c = \omega$. We assume here that $\omega > 0$. If 
\eqref{CritEqt} with $b = \varepsilon$ and 
$c = \omega$ is stable, then phase stability holds true for 
\eqref{SchroEvol} in the sense that for any sequence 
$u_\alpha e^{-i\omega_\alpha t}$ of solitons, with $\Vert u_\alpha\Vert_{H^2} \le \Lambda$ 
for some $\Lambda > 0$, if $\omega_\alpha \to \omega$ in $\mathbb{R}$, then, 
up to a subsequence, $u_\alpha \to u$ in $C^4$ and the sequence 
of solitons converges to another soliton. In other words, if \eqref{CritEqt} is table, then the sole convergence of the 
phase suffices to guarantee convergence of the solitons. A corollary to Theorem 
\ref{StabilityThm} is that phase stability holds true for \eqref{SchroEvol} when 
the scalar curvature of the background space is positive, $\varepsilon > 0$ is 
sufficiently small, and $\omega \in (0,\varepsilon)$, up to the addition of extra 
assumptions when $n = 5$ in order to apply Theorem \ref{PositMass}.

\section{Proof of Theorem \ref{PositMass}}\label{ProofPosMassThm}

First we prove that $\mu_x(x) \ge 0$ for all $x$. 
Let $P_0$ be the geometric Paneitz operator as in the left hand side of \eqref{ConfEinstEqt}, and $P_g = \Delta_g^2 + b\Delta_g + c$. Let also 
$G_0$ be the Green's function of $P_0$ and $G$ be the Green's function of $P_g$. We fix $x \in M$, and let $\tau_x: M\backslash\{x\} \to \mathbb{R}$ 
be the function such that
\begin{equation}\label{DefTau}
G(x,\cdot) = G_0(x,\cdot) + \tau_x(\cdot)
\end{equation}
in $M\backslash\{x\}$. When $n = 5$, $\tau_x$ extends continuously in $M$. Moreover, we have that 
$P_g\tau_x = -P_gG_0(x,\cdot) = (P_0-P_g)G_0(x,\cdot)$ in 
$M\backslash\{x\}$. Noting that
$$(P_0-P_g)G(x,\cdot) = O(d_g(x,\cdot)^{-3})\hskip.1cm ,$$
we actually have that $\tau_x \in H_4^p(M)\cap C^{0,\theta}(M)$ for all 
$p \in (1,5/3)$ and all $\theta \in (0,1)$, where $H_4^p$ is the Sobolev space of functions in $L^p$ with four derivatives in $L^p$. In particular, 
$$\tau_x(y) = \int_MG(y,\cdot)\left(P_0-P_g\right)G_0(x,\cdot)dv_g$$
for all $y \in M$, and noting that $H_4^p \subset H_2^{\frac{5p}{5-2p}}$ and $\frac{5p}{5-2p} > 2$ for $p$ close to $5/3$, we get that 
\begin{equation}\label{SpliitingComputtsMass}
\begin{split}
\tau_x(x) &=\int_MG_0(x,\cdot)\left(P_0-P_g\right)G_0(x,\cdot)dv_g + \int_M\tau_xP_g\tau_xdv_g\\
&= \int_M\bigl(A_g-bg\bigr)(\nabla G_0\left(x,\cdot),\nabla G_0(x,\cdot)\right)dv_g\\
&\hskip.2cm + \int_M\left(\frac{1}{2}Q_g-c\right)G_0(x,\cdot)^2dv_g 
+ \int_M\left((\Delta_g\tau_x)^2 + b\vert\nabla\tau_x\vert^2 + c\tau_x^2\right)dv_g\hskip.1cm .
\end{split}
\end{equation}
By assumption, $bg \le A_g$ and $c \le \frac{1}{2}Q_g$.  Hence $\tau_x \ge 0$ in $M$. Now we use the fact that $g$ is conformally flat. 
In particular, there exists $\varphi > 0$ such that $g = \varphi^4\tilde g$ and $\tilde g$ is flat around $x$. 
The Green functions $G_0$ and $\tilde G_0$ of $P_0$ and $\tilde P_0$, where $\tilde P_0$ 
is the geometric Paneitz operator with respect to $\tilde g$, are related by
\begin{equation}\label{GreenLastEqt1}
G_0(x,y) = \frac{\tilde G_0(x,y)}{\varphi(x)\varphi(y)}
\end{equation}
for all $x \not= y$. Independently,
\begin{equation}\label{GreenLastEqt2}
\tilde G_0(x,y) = \frac{1}{6\omega_4d_{\tilde g}(x,y)} + A + \alpha_x(y)\hskip.1cm ,
\end{equation}
where $\alpha_x$ is continuous and such that $\alpha_x(x) = 0$. Combining \eqref{GreenLastEqt1} and \eqref{GreenLastEqt2}, 
thanks to \eqref{DistEqt}, we get that 
$$G_0(x,y) = \frac{1}{6\omega_4d_g(x,y)} + A + \tilde\alpha_x(y)\hskip.1cm ,$$
where $\tilde\alpha_x$ is such that $\tilde\alpha_x(x) = 0$. Coming back to \eqref{GreenFct}, thanks to \eqref{DefTau}, we then get that
\begin{equation}\label{MassEquiv}
\mu_x(x) = A + \tau_x(x)\hskip.1cm .
\end{equation}
By Humbert and Raulot \cite{HumRau}, assuming the Yamabe invariant is positive, $P_0$ is coercive, and $G_0$ is positive, we have that 
$A > 0$ with equality if and only if 
$(M,g)$ is conformally diffeomorphic to the unit sphere. 
Since $\tau_x(x) \ge 0$, and $x$ is arbitrary, we proved that $\mu_x(x) \ge 0$ for all $x$, and that if $\mu_x(x) = 0$ for some $x$, then 
$(M,g)$ is conformally diffeomorphic to the unit sphere. 

\medskip We assume now that $\mu_x(x) = 0$ for some $x$. Then, by \eqref{MassEquiv}, $\tau_x(x) = 0$ and $A = 0$. In particular $(M,g)$ is 
conformally diffeomorphic to the unit sphere and by \eqref{SpliitingComputtsMass}, since $b, c > 0$, $bg \le A_g$ and 
$c \le \frac{1}{2}Q_g$, we get that $\tau_x\equiv 0$ and that
\begin{equation}\label{ConclEqtsMassLast}
\frac{1}{2}Q_g \equiv c\hskip.1cm\hbox{in}\hskip.1cm M\hskip.2cm\hbox{and}\hskip.2cm \left(A_g-bg\right)\left(\nabla G(x,\cdot),\nabla G(x,\cdot)\right) \equiv 0
\hskip.1cm\hbox{in}\hskip.1cm M\backslash\{x\}\hskip.1cm .
\end{equation}
By first equation in \eqref{ConclEqtsMassLast}, $Q_g$ is constant, and since $g$ is conformal to the round metric we get, see for instance 
Hebey and Robert \cite{HebRob} for the classification of all constant metrics, that $g$ has constant sectional curvature. 
In particular, $(M,g)$ is isometric to the $5$-sphere 
with a constant multiple of the round metric. Then we also get that $A_g \equiv kg$ for some 
constant $k$ and it follows from the second equation in \eqref{ConclEqtsMassLast} that necessarily $k = b$. In particular, 
$c \equiv \frac{1}{2}Q_g$ and $A_g \equiv bg$ in $M$. This ends the proof of Theorem \ref{PositMass}.

\end{document}